\newtheorem{definition}{Definition}
\newtheorem{theorem}{Theorem}
\newtheorem{lemma}{Lemma}
\newtheorem{corollary}{Corollary}
\begin{document}
%\begin{linenumbers}

\begin{center}  {Stability and Hopf bifurcation in a delayed viral infection model with mitosis transmission}
\end{center}

\smallskip

\smallskip
\begin{center}
{\small \textsc{Eric \'Avila--Vales}\footnote{Corresponding author. email: avila@uady.mx}, \textsc{No\'e Chan--Ch\i}\footnote{email: noe.chan@uady.mx},\textsc{Gerardo E. Garc\'ia-Almeida}\footnote{email: galmeida@uady.mx}, \textsc{Cruz Vargas-De-Le\'on}\footnote{email: leoncruz82@yahoo.com.mx}
}
\end{center}
\begin{center} {\small \sl $^{1,2,3}$ Facultad de Matem\'aticas, Universidad Aut\'onoma de Yucat\'an, \\ Anillo Perif\'erico Norte, Tablaje 13615, C.P. 97119, M\'erida, Mexico}\\ {\small \sl $^{4}$Unidad Acad\'emica de Matem\'aticas, Universidad Aut\'onoma de Guerrero, Chilpancingo, Guerrero, M\'exico}
\end{center}

\bigskip

{\small \centerline{\bf Abstract}
\begin{quote}
In this paper we study a model of HCV with mitotic proliferation, a saturation infection rate and a discrete intracellular delay: the delay corresponds to the time between infection of a infected target hepatocytes and production of new HCV particles.  We establish the global stability of the infection--free equilibrium and existence, uniqueness, local and global stabilities of the infected equilibrium, also we establish the occurrence of a Hopf bifurcation. We will determine conditions for the permanence of model, and the length of delay to preserve stability. The unique infected equilibrium is globally-asymptotically stable for a special case, where the hepatotropic virus is non-cytopathic. \\
We present a sensitivity analysis for the basic reproductive number. Numerical simulations are carried out to illustrate the analytical results.
\end{quote}}

\medskip

\noindent {\bf Keywords}: Local Stability, Hopf Bifurcation, Global Stability, Permanence, Sensitivity Analysis

\bigskip

\section{Introduction}\label{Intro}
The mathematical theory of viral infections has been around during the last decades. This theory has proven to be valuable on the understanding of the dynamics of viral infections and in the evaluation  of effectiveness of antiviral therapy. The most studied viruses in the mathematical theory of viral infections are human immunodeficiency virus (HIV), hepatitis C virus (HCV), human T-lymphotropic virus type 1 (HTLV-1) and hepatitis B virus (HBV).

In the mathematical theory of viral infections the basic models are concentrated on two steps of the lytic cycle: viral entry and release. One of the most well-known models for viral dynamics includes only three state variables, one variable for uninfected target cells, a second variable for infected cells, and a third variable for virions or free virus particles \cite{neumann,Nowakbook,PerelsonHIV}. The basic model  of viral dynamics  that has been formulated  by Neumann  and co--workers \cite{neumann} for HCV dynamics and this has been applied to the analysis  of response to antiviral therapy.  The ordinary differential equations (ODE) model is as follows:
\[ \begin{aligned}
\dot T(t)=& s-dT(t)-(1-\eta )b V(t)T(t), \\
\dot I(t)=& (1-\eta )b V(t)T(t)-\mu I(t), \\
\dot V(t)=& (1-\epsilon)pI(t)-cV(t),
\end{aligned} \]
Here $x(t)$, $y(t)$ and $v(t)$ denote the concentration of uninfected hepatocytes  (or target  hepatocytes), infected hepatocytes and virions or free virus particles, respectively. All parameters are assumed to be positive constants. Here, target cells are generated at a constant rate $s$ and die at rate $d$ per uninfected cell. These cells are infected at rate $b$ per target cell per virion. Infected cells die at rate $\mu$ per cell by cytopathic effects. Because of the viral burden on the virus-infected cells, we assume that $d\leq\mu$. Parameter $p$
represents the average rate at which the  hepatitis C virus are produced by infected cells and $c$ is the clearance rate of virus particles. Parameter $\eta$ is the efficiency of drug therapy in preventing new infections, and $\epsilon$ is the efficiency of drug therapy in inhibiting viral production.

The intracellular viral life-cycle is an important process to consider in mathematical models.  In the mathematical theory of viral infections, the intracellular delays of viral life cycle are modelled by delay differential equations. This class of models are concentrated on two intracellular delays:  The first delay represents the time between viral entry into a uninfected-target cell and the production of new virus (See, \cite{Ruan,Culshaw,Herz,Huangvirus,LiHIV,Zhu}) and the second delay corresponds to the time necessary for a newly produced virus to become infectious virus particles (See, \cite{Huangvirus,Tam,CVDL12,Zhu}).

The first model that included a intracellular delay was developed by Herz and co--workers \cite{Herz}, this model  incorporate the delay between the time a cell is infected and the time it starts producing virus. Subsequently, Li and Shu \cite{MYLi} studied the global stability of the associated equilibrium points  for this delayed system. Tam \cite{Tam} suggested an virus dynamics model with virus production delay. Zhu and Zuo \cite{Zhu} studied the basic model of viral infections with two intracellular delays: cell infection and virus production. Huang, Takeuchi and Ma \cite{Huangvirus} derived a class of within--host virus models with a nonlinear incidence rate and discrete intracellular delays, followed by its global analysis.

The basic model of viral infections \cite{Herz,MYLi,Zhu} assumes a source of uninfected cells but ignores mitotic proliferation of  uninfected cells or infected cells.  Later on,  Dahari and co--workers \cite{dahari1,dahari2} for hepatitis  C viral, extending the basic model \cite{neumann,Nowakbook,PerelsonHIV} include mitotic proliferation terms for both uninfected and infected hepatocytes. Here they assume that the proliferation of cells due to mitotic division obeys a logistic  growth law. The mitotic proliferation of uninfected cells is described by $aT\left( 1-\frac{T(t)+I(t)}{T_{\max}} \right).$ New infectious transmission occurs at a rate $b T(t) V(t)$, while new mitotic transmission occurs at a rate $aI(t)\left( 1-\frac{T(t)+I(t)}{T_{\max}}\right)$.  Both infected and uninfected cells can proliferate with maximum proliferation rate $a$, as long as the total number of cells, $T(t)+I(t)$, is less than $T_{max}$. The ODE model is as follows:
\[\begin{aligned}
\dot T(t)=& s+aT\left( 1-\frac{T(t)+I(t)}{T_{\max}} \right)-dT(t)-(1-\eta)b T(t)V(t),\\
\dot I(t)=& (1-\eta)b T(t)V(t)+aI(t)\left( 1-\frac{T(t)+I(t)}{T_{\max}}\right)-\mu I(t),\\
\dot V(t)=& (1-\epsilon)pI(t)-cV(t).
\end{aligned}\]
In other words, the hepatitis model includes mitotic proliferation
of uninfected  cells, and mitotic transmission of infection
through infected cell division. Also, this model assumed that
infection could occur instantaneously once a virus contacted a
target cell to infect a uninfected target cell. Hu  and
co--workers \cite{Hu} introduced a discrete time delay to the
hepatitis model to describe the time between infection of a
hepatocyte and the emission of viral particles. Subsequently,
Vargas-De-Le\'on \cite{CVDL11} studied the parameter conditions
for global stability of hepatitis model with mitotic proliferation
for both uninfected and infected hepatocytes.  The hepatitis model
given in \cite{dahari1,dahari2} without delay, coincides with the
HIV model studied in \cite{WangHIV} for which the global stability
analysis was completed.

In several of  models with or without delay described above, the
process of cellular infection by free virus particles  are
typically modelled by mass action principle, that is to say, the
infection rate is assumed to occur at a rate proportional to the
product of the abundance of free-virus particles and uninfected
target cells. This principle is insufficient to describe the
cellular infection process in detail, and some nonlinear infection
rates were proposed.  Li and Ma \cite{LiHIV} and Song  and Neumann
\cite{Song}  considered a virus dynamics model with monod
functional response, $bT(t)V(t)/(1+\alpha V(t))$. Regoes, Ebert,
and Bonhoeffer \cite{Regoes} and Song  and Neumann \cite{Song}
considered a virus dynamics model with the nonlinear infection
rates $bT(t)((V(t)/\kappa)^{p})/(1+(V(t)/\kappa)^{p})$ and
$bT(t)(V(t))^{q}/(1+\alpha (V(t))^{p})$ where $p$, $q$, $\kappa>0$
are constants, respectively. Recently Huang, Takeuchi and Ma
\cite{Huangvirus} considered a class of models of viral infections
with an nonlinear infection rate and two discrete intracellular
delays, and assumed that the infection rate is given by a general
nonlinear function of the abundance of free-virus particles and
uninfected target cells, $F(T(t),V(t))$, where the function
$F(T(t),V(t))$ satisfies the concavity with respect to the
abundance of free-virus particles. Such condition is satisfied by
several well known infection rates. The DDE model given in
\cite{Huangvirus} without delay is studied by Korobeinikov in
\cite{Korobeinikov}.

Motivated by the above comments, in the present paper, we proposed a model  more realistic by using an infection rate that saturates and a discrete intracellular delay. We consider the following delay differential equation (DDE) model
\begin{equation} \label{hiv3}\begin{aligned}
\dot T(t)=& s-dT(t)+aT(t)\left( 1-\frac{T(t)+I(t)}{T_{\max}}\right)-\frac{bT(t)V(t)}{1+\alpha V(t)}, \\
\dot I(t)=& \frac{bT(t-\tau)V(t-\tau)}{1+\alpha V(t-\tau)}+aI(t)\left( 1-\frac{T(t)+I(t)}{T_{\max}}\right)-\mu I(t), \\
\dot V(t)=& pI(t)-cV(t).
\end{aligned} \end{equation}

The assumptions are the following. We assume that the contacts between viruses and uninfected target cells are given by an infection rate $bT(t)V(t)/(1+\alpha V(t))$, it is reasonable for us to assume that the infection has a maximal rate of $b/\alpha$. On the other hand, to account for the time between viral entry into an uninfected target cell  and the production of an actively infected target cell, we introduce a  time delay $\tau$ that represents the time from entry to production of new virus.

From the point of view of applications, the study of the asymptotic stability of equilibria, the permanence and the existence of orbit periodic are interesting topics in biological models. Particularly, the qualitative analysis of the models reveals the existence of scenarios possible of a viral infection. A first scenario is that the viral population is eventually totally cleared. Mathematically, this means that the infection--free equilibrium state is asymptotically stable. Biologically, the permanence characterizes that the virus is not cleared. The permanence take care of the second and third scenarios, which will be described below. A second scenario is that the infection becomes established, and that the virus population grows with damped oscillations, or unimodal growth, that is the infected equilibrium state (interior equilibrium with all components positive) is asymptotically stable. A third scenario is that the virus population grows with self-sustained oscillations, is that the interior equilibrium state is unstable.

In this paper, we shall  study  the possible scenarios of virus dynamic model \eqref{hiv3}. The paper is organized as follows. In section 2, we establish the positivity of solutions. In the section 3 we prove the existence of the equilibria, we perform the local stability analysis and the global analysis. The Hopf bifurcation analysis is presented in section 4. Conditions for the permanence are establish in section 5. In section 6, we perform an estimation of the length of delay to preserve stability. The numerical validation is found in section 7, a sensitivity analysis is presented in section 8 and finally we draw a conclusion in section 9.
\section{Positivity of solutions}
We denote the Banach space of continuous functions $\phi : [-\tau ,0]\rightarrow \mathbb{R}$ with norm
\[ ||\phi ||=\sup _{-\tau \leq \theta \leq 0}\left\{ |\phi _1|, |\phi _2|, |\phi _3|,\right\}\]
by $\mathcal{C}$, where $\phi=(\phi _1,\phi _2,\phi _3)$. Further, let
\[ \mathcal{C}_+=\left\lbrace (\phi _1,\phi _2,\phi _3) \in \mathcal{C}: \phi _i\geq 0 \text{ for all }\theta \in [-\tau ,0], i=1,2,3 \right\rbrace .\]
The initial conditions for system \eqref{hiv3} are
\begin{equation} \label{hiv4}
T(\theta )=\phi_1 (\theta)\geq 0,\; I(\theta )=\phi_2 (\theta)\geq 0, \; V(\theta )=\phi_3 (\theta)\geq 0, \; \theta \in [-\tau ,0],
\end{equation}
where $\phi =(\phi_1,\phi_2,\phi_3)$.
\begin{lemma} All solutions of system \eqref{hiv3} with initial conditions \eqref{hiv4} are positive.
\end{lemma}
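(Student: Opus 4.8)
The plan is to show that the nonnegative cone $\mathcal{C}_+$ is positively invariant by rewriting each equation in the form $\dot X = (\text{source}) + g(t)X$ and applying the variation-of-constants (integrating-factor) representation along the solution, combined with the method of steps to handle the discrete delay. First I would treat the $T$-equation, which contains no delayed argument. Writing $\dot T(t) = s + g_1(t)T(t)$ with $g_1(t) = -d + a\left(1 - \frac{T(t)+I(t)}{T_{\max}}\right) - \frac{bV(t)}{1+\alpha V(t)}$, the function $g_1$ is continuous along any solution, so
\[ T(t) = \phi_1(0)\exp\!\left(\int_0^t g_1(u)\,du\right) + s\int_0^t \exp\!\left(\int_\xi^t g_1(u)\,du\right) d\xi. \]
Since $\phi_1(0)\geq 0$, $s>0$, and both exponentials are strictly positive, this yields $T(t) > 0$ for all $t>0$ in the interval of existence, independently of the signs of $I$ and $V$.

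For $I$ and $V$ I would proceed by the method of steps. On $[0,\tau]$ the delayed arguments satisfy $t-\tau\in[-\tau,0]$, so $T(t-\tau)=\phi_1(t-\tau)\geq0$ and $V(t-\tau)=\phi_3(t-\tau)\geq0$ by \eqref{hiv4}; hence the source term $h(t):=\frac{bT(t-\tau)V(t-\tau)}{1+\alpha V(t-\tau)}$ is nonnegative on $[0,\tau]$. Writing $\dot I(t) = h(t) + g_2(t)I(t)$ with $g_2(t) = a\left(1-\frac{T(t)+I(t)}{T_{\max}}\right) - \mu$, the same integrating-factor representation gives $I(t) = \phi_2(0)\exp\!\left(\int_0^t g_2\right) + \int_0^t h(\xi)\exp\!\left(\int_\xi^t g_2\right) d\xi \geq 0$ on $[0,\tau]$, because $\phi_2(0)\geq0$, $h\geq0$, and the exponentials are positive; likewise $\dot V = pI - cV$ gives $V(t) = \phi_3(0)e^{-ct} + p\int_0^t I(\xi)e^{-c(t-\xi)}\,d\xi \geq 0$ once $I\geq0$ on $[0,\tau]$. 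The induction step then repeats this argument: assuming $T,I,V\geq0$ on $[0,n\tau]$, the delayed arguments on $[n\tau,(n+1)\tau]$ lie in $[(n-1)\tau,n\tau]$, so $h(t)\geq0$ there, and the two representations propagate nonnegativity of $I$ and $V$ to $[n\tau,(n+1)\tau]$ while strict positivity of $T$ persists throughout. By induction the conclusion holds on the whole interval of existence.

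The step I expect to require the most care is the coupling between $I$ and $V$ through the delayed term: nonnegativity of $I$ on a given step relies on the delayed source $h$, which in turn depends on $T$ and $V$ on the \emph{previous} step, while $V$ itself depends on $I$. The method of steps is precisely what resolves this circularity, by decoupling the delayed source from the current-interval dynamics. A minor point to record is that the representations are valid even though $g_1,g_2$ depend on the unknown solution, since along a fixed solution they are merely continuous functions of $t$; and that the argument delivers strict positivity of $T$ but a priori only nonnegativity of $I$ and $V$ (strict positivity of the latter would follow if $\phi_2(0)>0$ or if the delayed source is not identically zero).
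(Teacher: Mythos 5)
Your proof is correct, but it takes a genuinely different route from the paper. The paper argues by contradiction at the first crossing time: it supposes a component vanishes, takes the first time $t_0$ (resp.\ $t_1$) at which $T$ (resp.\ $I$) hits zero, and derives a contradiction from the sign of the derivative there, namely $\dot T(t_0)=s>0$ and $\dot I(t_1)=\frac{bT(t_1-\tau)V(t_1-\tau)}{1+\alpha V(t_1-\tau)}>0$; the argument for $V$ is asserted to be analogous. Your approach instead is constructive: you exhibit explicit variation-of-constants representations with positive integrating factors, and you use the method of steps to decouple the delayed source from the current-interval dynamics. Two things are worth noting in the comparison. First, your version is actually the more robust one under the stated hypotheses: the paper's initial data \eqref{hiv4} are only required to be \emph{nonnegative}, so the delayed source $T(t_1-\tau)V(t_1-\tau)$ in the paper's contradiction step need not be strictly positive (take $\phi_3\equiv 0$, $\phi_2(0)=0$, for which $I\equiv V\equiv 0$ is a legitimate solution); your closing remark that one can only conclude nonnegativity of $I$ and $V$ in general, with strict positivity under extra hypotheses on the data, is exactly the correct statement, whereas the paper's proof tacitly assumes $I(0)>0$ and positivity of the delayed product. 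Second, what each approach buys: the paper's barrier argument is shorter and requires no integral formulas, while yours yields quantitative lower bounds (e.g.\ $T(t)\geq s\int_0^t \exp\bigl(\int_\xi^t g_1\bigr)d\xi>0$) and extends verbatim to any system with nonnegative off-diagonal couplings and nonnegative delayed sources. One small point of care in your argument, which you implicitly handle: the coefficients $g_1,g_2$ involve $\frac{bV}{1+\alpha V}$ and so are continuous along a solution only because, on the maximal interval of existence, the right-hand side of \eqref{hiv3} must be defined, forcing $1+\alpha V(t)\neq 0$ there; once the induction establishes $V\geq 0$, this denominator is bounded below by $1$ and the issue disappears.
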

\begin{proof}
We prove the positivity by contradiction. Suppose $T(t)$ is not always positive. Then, let $t_0>0$ be the first time such that $T(t_0)=0$. From the first equation of \eqref{hiv3} we have $\dot T(t_0)=s>0$. By our assumption  this means $T(t)<0$, for $t\in (t_0-\epsilon ,t_0)$, where $\epsilon$ is an arbitrary small positive constant. Implying that exist $t_0'<t_0$ such that $T(t_0')=0$ this is a contradiction because we take $t_0$ as the first value which $T(t_0)=0$. It follows that $T(t)$ is always positive.

We now show that $I(t)>0$ for all $t>0$. Otherwise, if it is not valid, nothing that $I(0)>0$ and $I(t)>0$, $(-\tau \leq t\leq 0)$, then there exist a $t_1$ such that $I(t_1)=0$. Assume that $t_1$ is the first time such that $I(t)=0$, that is, $t_1=\inf \{ t>0: \; I(t)=0\}$.

Then $t_1>0$, and from system \eqref{hiv3} with \eqref{hiv4}, we get
\[
\dot I(t_1)=\bigg\{ \begin{array}{ll}
\varphi _1(t_1-\tau)\varphi _3(t_1-\tau)>0, & \text{ if }0\leq t_1\leq \tau, \\
T(t_1-\tau)V(t_1-\tau)>0, & \text{ if } t_1>\tau.
\end{array}
\]
Thus, $\dot I(t_1)>0$. Thus, for sufficiently small $\epsilon>0$, $\dot I(t_1-\epsilon)>0$. But by the definition of $t_1$, $\dot I(t_1-\epsilon)\leq 0$. Again this is a contradiction. Therefore, $I(t)>0$ for all $t$.

In the same way, we see that $V(t)$ is always positive. Thus, we can conclude that all solutions of system \eqref{hiv3} and \eqref{hiv4} remain positive for all $t>0$.
\end{proof}
\section{Stability Analysis}
\subsection{Equilibrium}
To obtain the equilibrium points we look for constant solutions for system \eqref{hiv3}, and we obtain $E_1(T_0,0,0)$ and $E_2(T_2,I_2,V_2)$, where
\begin{equation} \label{hiv45} \begin{aligned}
T_0=&\frac{T_{\max}}{2a}\left( a-d+\sqrt{(a-d)^2+\frac{4as}{T_{\max}}}\right),\\
\end{aligned} \end{equation}
%T_2=& \frac{(1+\alpha V_2)(ac^2V_2+(\mu -a)cpT_{\max})}{p[bT_{\max}-ac(1+\alpha V_2)]}\\
%I_2=& \frac{pV_2}{c},

%with $V_2$ as the positive solution of the next polynomial in $x$
%\[x^3+Ax^2+Bx+C=0\]
%with the coefficients as
%\[ \begin{aligned}
%A=&[ac^2((2acp\alpha -ap^2\alpha ^2T_{\max}+p^2\alpha ^2\mu T_{\max})(d-\mu)+abc(p-1)+abp^2T_{\max}(\mu -\alpha)\\
%&-bp\alpha T_{\max}(d+\mu )+2abp\alpha T_{\max} -b^2pT_{\max}+ap^2s\alpha ^2)]/[a^2c^3\alpha (\alpha p(d-\mu)+b(p-1))],\\
%B=&-[cp(-abdp\alpha T_{\max}^2+a^2bcpT_{\max}+ b^2p\mu T_{\max}^2+2abps\alpha T_{\max}+ 2a^2cdp\alpha T_{\max}-ab^2pT_{\max}\\
%&+2acp\alpha \mu^2 T_{\max}-2acdp\alpha \mu T_{\max}-2a^2csp\alpha -abcp\mu T_{\max}+bdp\alpha\mu T_{\max}^2+ a^2bp\alpha T_{\max}^2\\
%&-2a^2cp\alpha \mu T_{\max} -abp\alpha \mu T_{\max}^2 +abcdT_{\max} +abc\mu T_{\max} +a^2c^2\mu -a^2c^2d-2a^2bcT_{\max})]/\\
%&[a^2c^3\alpha (\alpha p(d-\mu)+b(p-1))],\\
%C=&-p^2(-b^2sT_{\max}^2-a^2c^2a+2abcsT_{\max}-acd^2\mu T_{\max}-abcdT_{\max}^2+a^2bcT_{\max}^2+ bcd\mu T_{\max}^2\\&
%+a^2c^2dT_{\max}-abc\mu T_{\max}^2)/[a^2c^3\alpha (\alpha p(d-\mu)+b(p-1))].
%\end{aligned} \]
%Note that, if the previous third degree polynomial has at least a positive root $V_2$, we could have one or more infected equilibria.
%Instead of analyzing that polynomial and the expression for $T_2$ in \eqref{hiv45} in order to determine the existence and uniqueness of the infected equilibrium, following a similar argument given in \cite{debroy}, we use a geometric approach to do so.
%Now we explore the existence and uniqueness of the infected equilibrium.
The equilibrium point $(T_2,I_2,V_2)$ satisfies
\begin{equation} \label{hcv1}\begin{aligned}
0=& s-dT_2+aT_2\left( 1-\frac{T_2+I_2}{T_{\max}}\right)-\frac{bT_2I_2}{1+\alpha V_2}\\
0=& \frac{bT_2V_2}{1+\alpha V_2}-\mu I_2+aI_2\left( 1-\frac{T_2+I_2}{T_{\max}} \right)\\
0=& pI_2-cV_2.
\end{aligned}\end{equation}

The third equation leads to $V_2=\dfrac{pI_2}{c}$, which allows us to reduce system \eqref{hcv1} to two equations, defining $\tilde \alpha =\dfrac{\alpha p}{c}$ and $\tilde b=\dfrac{bp}{c}$.

We express the first equation of \eqref{hcv1} as the following quadratic equation in $T_2$:
\[
\frac{a}{T_{\max}}T_{2}^{2}+\left(  d-a+\frac{aI_{2}}{T_{\max}}+\frac{bI_{2}}{1+\tilde{\alpha}I_{2}}\right)  T_{2}-s=0.
\]

Note that this quadratic equation has two real roots of opposite sign that depend on $I_{2}$. We are interested in the positive one, which is clearly a function of $I_{2}.$

Defining $T_2=f(I_2)$, we express the first two equations of system \eqref{hcv1} as
\begin{equation} \label{hcv2} \begin{aligned}
0=& s-df(I_2)+af(I_2)\left( 1-\dfrac{f(I_2)+I_2}{T_{\max}}\right)-\frac{\tilde{b}f(I_2)I_2}{1+\tilde \alpha I_2} \\
0=& \frac{\tilde bf(I_2)I_2}{\mu (1+\tilde \alpha I_2)}-I_2+\frac{a}{\mu}I_2\left( 1-\dfrac{f(I_2)+I_2}{T_{\max}}\right).
\end{aligned}\end{equation}
To compute the infection-free equilibrium we assume $I_2=V_2=0$, and we obtain
\[ T_0=f(0)=\dfrac{T_{\max}}{2a}\left( (a-d)+\sqrt{(a-d)^2+\frac{4as}{T_{\max}}}\right).\]
Now if we consider the infected  equilibrium, then $I_2>0$. From the second equation of \eqref{hcv2} we can define
\[F(I_2)=\frac{\tilde b}{\mu}\frac{f(I_2)}{1+\tilde \alpha I_2}+\frac{a}{\mu}\left( 1-\frac{f(I_2)+I_2}{T_{\max}} \right).\]
Considering that $f(0)=T_0$, then
\[ F(0)=\frac{\tilde b}{\mu}T_0+\frac{a}{\mu}\left( 1-\frac{T_0}{T_{\max}}\right)=R_0=\frac{D}{\mu},\]
where $D=\tilde b T_0+a\left( 1-\dfrac{T_0}{T_{\max}}\right)$.

We can rewrite the second equation of \eqref{hcv2} as $F(I_2)=1$.

Geometrically, we can interpret that equation as the intersections of the graph of the function $F\left(  I_{2}\right)$ with the line $F=1$ in the plane determined by $F$ and $I_{2}.$ In order to have a biologically feasible infected equilibrium, the intersections to be considered are those in the first
quadrant of that plane. Now, if the slope of the tangent line along the graph of the function $F\left(  I_{2}\right)  $ is negative and does not approach zero as $I_{2}$ increases beginning at $I_{2}=0,$ as well as $F\left( 0\right)  >1,$ then there is only one of such intersections in the first quadrant, meaning a unique biologically feasible infected equilibrium. Now.

\[ \begin{aligned}
F'(I_2)=& \frac{\tilde b}{\mu }\frac{f'(I_2)}{1+\tilde \alpha }-\frac{\tilde b \tilde \alpha f(I_2)}{\mu (1+\tilde \alpha I_2)^2} -\frac{a}{\mu}\frac{f'(I_2)}{T_{\max}}-\frac{a}{\mu T_{\max}}\\
=& \frac{1}{\mu} \left( \frac{\tilde b}{1+\tilde \alpha I_2}-\frac{a}{T_{\max}}\right)f'(I_2)-\frac{1}{\mu} \left( \frac{\tilde b \tilde \alpha f(I_2)}{(1+\tilde \alpha I_2)^2} +\frac{a}{T_{\max}}\right)
\end{aligned} \]
Note that $F'(I_2)$ depends on $f'(I_2)$. To calculate $f'(I_2)$ we first rewrite the first equation of \eqref{hcv2} as
\[
0=\frac{s}{f(I_2)}-d+a\left( 1-\frac{f(I_2)+I_2}{T_{\max}} \right)-\frac{bI_2}{1+\tilde \alpha I_2}.
\]
Using implicit differentiation we get
\[
f'(I_2)=-\left(
\frac{a}{T_{\max}}+
\frac{\tilde b} {(1+\tilde \alpha I_2)^2}
\right)
\left( \frac{a}{T_{\max}}+\frac{s}{f^2(I_2)} \right) ^{-1}<0
\]
then
\[ \begin{aligned}
F'(I_2)&=\frac{1}{\mu}\left( \left( \frac{a}{T_{\max}}\right) ^2- \left( \frac{\tilde b}{1+\tilde \alpha I_2}\right) ^2\right)\left( \frac{a}{T_{\max}}+\frac{s}{f^2(I_2)}\right)^{-1} -\frac{1}{\mu} \left(  \frac{\tilde b\tilde \alpha f(I_2)}{(1+\tilde \alpha I_2)^2} +\frac{a}{T_{\max}}\right) \\
&< \frac{1}{\mu} \left( \frac{a}{T_{\max}}\right) ^2\left( \frac{a}{T_{\max}}+\frac{s}{f^2(I_2)}\right)^{-1} -\frac{1}{\mu} \left(  \frac{\tilde b\tilde \alpha f(I_2)}{(1+\tilde \alpha I_2)^2} +\frac{a}{T_{\max}}\right) \\
&< \frac{1}{\mu} \left( \frac{a}{T_{\max}}\right) ^2\left( \frac{a}{T_{\max}}\right)^{-1} -\frac{1}{\mu} \left(  \frac{\tilde b\tilde \alpha f(I_2)}{(1+\tilde \alpha I_2)^2} +\frac{a}{T_{\max}}\right) \\
&= -\frac{1}{\mu} \frac{\tilde b\tilde \alpha f(I_2)}{(1+\tilde \alpha I_2)^2}<0.
\end{aligned} \]
Note that the minimum value that the denominator $\left(  \dfrac{a}{T_{\max}}+\dfrac{s}{f^{2}(I_{2})}\right)$ can take is $$\left(\dfrac{a}{T_{\max}
}+\dfrac{s}{f^{2}(0)}\right)  =\left(  \dfrac{a}{T_{\max}}+\dfrac{s}{T_{0}^{2}}\right)>\dfrac{a}{T_{\max}}.$$ Hence $F^\prime(I_{2})$ is negative and does not approach zero as $I_{2}$ increases. If, in addition $R_{0}>1,$ then $F\left(  0\right)  >1$ and the slope of the tangent line along the graph of the function $F\left(  I_{2}\right)  $ is negative and does not approach zero as $I_{2}$ increases beginning at $I_{2}=0.$ It follows that the infected equilibrium exists and it is unique if $R_{0}>1$. Where
\begin{equation} \label{hivr0}
R_0=\frac{1}{\mu} \left[ \frac{bpT_0}{c}+a\left( 1-\frac{T_0}{T_{\max}} \right) \right]
\end{equation}
as in \cite{debroy}
\subsection{Local Analysis}
In this section we study the local stability of the infection-free equilibrium $E_1$ and the infected equilibrium $E_2$. \\
The characteristic equation of system \eqref{hiv3} at the infection-free equilibrium is of the form
\begin{equation} \label{hiv5}
\left( \lambda+d-a+\frac{2aT_0}{T_{\max}}\right) \left( \lambda ^2+\left(c+\mu -a+\frac{aT_0}{T_{\max}}\right)\lambda +c(\mu -a)+\frac{acT_0}{T_{\max}} -bpT_0{\rm e}^{-\lambda \tau}\right)
\end{equation}
we also consider that $E_1$ satisfies system \eqref{hiv3}, so $a\left( 1-\dfrac{T_0}{T_{\max}}\right) =d-\dfrac{s}{T_0}$. And using the previous fact we can rewrite the factors of the characteristic equation \eqref{hiv5} as
\[\lambda =a-d-\frac{2aT_0}{T_{\max}} = -\left( \frac{s}{T_0}+\frac{aT_0}{T_{\max}} \right)\]
which have a negative eigenvalue, and the others eigenvalues are given by
\begin{equation} \label{hiv6}
\lambda ^2+a_1\lambda +a_0+b_0{\rm e}^{-\lambda \tau }=0
\end{equation}
where
\[\begin{aligned}
a_1=& c+\mu -d+\frac{s}{T_0},\\
a_0=& c(\mu -d)+\frac{cs}{T_0},\\
b_0=& -bpT_0
\end{aligned}\]
Let
\[f(\lambda )=\lambda ^2+a_1\lambda +a_0+b_0{\rm e}^{-\lambda \tau }\]
note that
\[ \begin{aligned}
a_0+b_0=&-\frac{1}{c}\left[ \frac{bpT_0}{c}+d-\frac{s}{T_0}-\mu\right]=-\frac{1}{c}\left[ \frac{bpT_0}{c}+a\left( 1-\frac{T_0}{T_{\max}}\right)-\mu\right] \\
=&-\frac{\mu }{c}(R_0-1)
\end{aligned} \]
if $R_0>1$, and we consider $\lambda$ real we have that
\[f(0)=a_0+b_0<0 \text{ and } \lim _{\lambda \rightarrow \infty}f(\lambda) =\infty.\]
Hence exist a positive root $\lambda ^*$ of $f(\lambda)$, therefore the characteristic equation  \eqref{hiv5} has a positive root and the infection-free equilibrium is unstable.

When $\tau =0$ the equation \eqref{hiv6} becomes
\[ \lambda ^2+a_1\lambda +a_0+b_0=0,\]
with $a_1>0$ and, if $R_0<1$, we have $a_0+b_0>0$. Hence, the equilibrium $E_1$ is locally asymptotically stable when $\tau =0$.

If $\lambda =i\omega$ ($\omega >0$) is a solution of \eqref{hiv6} then separating in real and imaginary parts we obtain the system
\[ \begin{aligned}
a_0-\omega ^2&=-b_0\cos (\omega \tau),\\
a_1\omega &=b_0\sin (\omega \tau),
\end{aligned}\]
squaring and adding the last two equations and after simplifications we get
\begin{equation} \label{hiv7}
\omega ^4+(a_1^2-2a_0)\omega ^2+a_0^2-b_0^2=0,
\end{equation}
if $R_0<1$ the equation \eqref{hiv7} has no positive roots. Noting that the equilibrium $E_1$ is locally asymptotically stable when $\tau =0$, by the theory on characteristic equations of delay differential equations from Kuang \cite{kuang}, we see that if $R_0<1$, $E_1$ is locally asymptotically stable.

The characteristic equation of system \eqref{hiv3} on the infected equilibrium $E_2$ is given by the following determinant
\[
\det \left( \begin{matrix}
a-d-\frac{a}{T_{\max}}[2T_2+I_2] -\frac{bV_2}{1+\alpha V_2}-\lambda & -\frac{a}{T_{\max}}T_2 & -\frac{bT_2}{(1+\alpha V_2)^2} \\
\frac{bV_2}{1+\alpha V_2}e^{-\lambda \tau}-\frac{a}{T_{\max}}I_2 & a-\mu -\frac{a}{T_{\max}}[T_2+2I_2]-\lambda & \frac{bT_2}{(1+\alpha V_2)^2}e^{-\lambda \tau} \\
0 & p & -c-\lambda
\end{matrix} \right)
\]
that can be rewritten as
\[
\det \left( \begin{matrix}
-\left( \rho + \frac{a}{T_{\max}}T_2+ \frac{bV_2}{1+\alpha V_2}\right)-\lambda & -\frac{a}{T_{\max}}T_2 & -\frac{bT_2}{(1+\alpha V_2)^2} \\
\frac{bV_2}{1+\alpha V_2}e^{-\lambda \tau}-\frac{a}{T_{\max}}I_2 & -\left( \frac{bT_2V_2}{(1+\alpha V_2)I_2}+\frac{a}{T_{\max}}I_2\right)-\lambda & \frac{bT_2}{(1+\alpha V_2)^2}e^{-\lambda \tau} \\
0 & p & -\frac{pI_2}{V_2}-\lambda
\end{matrix} \right)
\]
and the characteristic equations is of the form
\begin{equation} \label{hiv8}
\lambda ^3+a_2\lambda ^2+a_1\lambda +a_0 + \left(b_1\lambda +b_0 \right){\rm e}^{-\lambda \tau}=0
\end{equation}
where
\[ \begin{aligned}
a_2=& \rho +\frac{aT_2}{T_{\max}}+\frac{bV_2}{1+\alpha V_2}+\frac{bT_2V_2}{(1+\alpha V_2)I_2}+\frac{aI_2}{T_{\max}}+\frac{pI_2}{V_2}>0\\
a_1=& \left( \rho +\frac{aT_2}{T_{\max}}+\frac{bV_2}{1+\alpha V_2}\right)\left( \frac{bT_2V_2}{(1+\alpha V_2)}+\frac{aI_2}{T_{\max}}\right)+\frac{pI_2}{V_2}\left( \rho +\frac{aT_2}{T_{\max}}+\frac{bV_2}{1+\alpha V_2}\right)\\
&+\frac{pI_2}{V_2}\left( \frac{bT_2V_2}{(1+\alpha V_2)I_2}+\frac{aI_2}{T_{\max}}\right)-\frac{aT_2}{T_{\max}}\frac{aI_2}{T_{\max}}>0\\
a_0=& \left( \rho +\frac{aT_2}{T_{\max}}+\frac{bV_2}{1+\alpha V_2}\right) \left( \frac{bT_2V_2}{(1+\alpha V_2)I_2} \right)\frac{pI_2}{V_2}-\frac{pbT_2}{(1+\alpha V_2)^2}\frac{aI_2}{T_{\max}}-\frac{aT_2}{T_{\max}}\frac{pI_2}{V_2}\frac{aI_2}{T_{\max}}\\
b_1=& \frac{aT_2}{T_{\max}}\frac{bV_2}{1+\alpha V_2}-\frac{pbT_2}{(1+\alpha V_2)^2}\\
b_0=& \frac{aT_2}{T_{\max}}\frac{pI_2}{V_2}\frac{bV_2}{1+\alpha V_2} +\frac{pbT_2}{(1+\alpha V_2)^2} \frac{bV_2}{1+\alpha V_2}-\frac{pbT_2}{(1+\alpha V_2)}\left( \rho +\frac{aT_2}{T_{\max}}+\frac{bV_2}{1+\alpha V_2}\right)
\end{aligned} \]
with $T_2$, $I_2$, $V_2$ defined in \eqref{hiv45}. \\
When $\tau =0$ equation \eqref{hiv8} becomes
\[ \lambda ^3+a_2\lambda ^2 +(a_1+b_1)\lambda +(a_0+b_0)=0\]
by the Routh--Hurwitz criterion the conditions for ${\rm Re }\lambda<0$ are $a_2>0$, $a_0+b_0>0$, $a_2(a_1+b_1) - (a_0+b_0)>0$, in our case $a_2>0$ and
\[ \begin{aligned}
a_0+b_0=&\frac{pbT_2}{1+\alpha V_2}\left( \rho +\frac{aT_2}{T_{\max}}+\frac{bV_2}{1+\alpha V_2}\right)\left( 1-\frac{1}{1+\alpha V_2}\right) +\frac{aI_2}{T_{\max}}\frac{pI_2}{V_2}\left( \rho +\frac{aT_2}{T_{\max}}+ \frac{bV_2}{1+\alpha V_2}\right) \\
&+\frac{pbT_2}{(1+\alpha V_2)^2}\left( \frac{bV_2}{1+\alpha V_2}-\frac{aI_2}{T_{\max}}\right) +\frac{aT_2}{T_{\max}}\frac{pI_2}{V_2}\left( \frac{bV_2}{1+\alpha V_2}-\frac{aI_2}{T_{\max}}\right)
\end{aligned}\]
so we need
\begin{itemize}
\item[(H1)] $\quad a_0+b_0>0, \quad a_2(a_1+b_1)-(a_0+b_0)>0$
\end{itemize}
If $\tau =0$, by the Routh--Hurwitz criterion, we have the following theorem
\begin{theorem}
If $(H1)$ is satisfied, then the infected equilibrium $E_2(T_2,I_2,V_2)$ is locally asymptotically stable.
\end{theorem}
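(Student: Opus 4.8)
The plan is to recognize that this statement concerns the delay-free case $\tau=0$, for which the characteristic equation \eqref{hiv8} collapses to the cubic polynomial
\[
\lambda^3 + a_2\lambda^2 + (a_1+b_1)\lambda + (a_0+b_0)=0,
\]
since $e^{-\lambda\tau}=1$ when $\tau=0$. The whole claim then reduces to a single application of the Routh--Hurwitz criterion for a degree-three polynomial, so the argument should be short and essentially bookkeeping rather than computation.

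First I would state the Routh--Hurwitz criterion in the form appropriate to a cubic $\lambda^3 + c_2\lambda^2 + c_1\lambda + c_0$: all three roots lie in the open left half-plane if and only if $c_2>0$, $c_0>0$, and $c_2 c_1 - c_0 >0$. Matching coefficients with the reduced equation gives $c_2 = a_2$, $c_1 = a_1+b_1$, and $c_0 = a_0+b_0$, so the three required inequalities become $a_2>0$, $a_0+b_0>0$, and $a_2(a_1+b_1)-(a_0+b_0)>0$.

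Next I would verify each inequality in turn. The first, $a_2>0$, is already established in the computation of the coefficients displayed just before the theorem, where $a_2$ appears as a sum of manifestly positive terms (recall $T_2,I_2,V_2>0$). The remaining two inequalities, $a_0+b_0>0$ and $a_2(a_1+b_1)-(a_0+b_0)>0$, are precisely the content of hypothesis $(H1)$. Hence, under $(H1)$, all three Routh--Hurwitz conditions hold simultaneously, every root of the cubic has negative real part, and $E_2$ is locally asymptotically stable at $\tau=0$.

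There is no genuine analytic obstacle here; the only point demanding care is \emph{scope}. The stability asserted is for the delay-free system, and the substantive work — showing that for $\tau>0$ no root of the transcendental equation \eqref{hiv8} crosses the imaginary axis until a critical delay is reached — is exactly the Hopf bifurcation analysis deferred to the next section. I would therefore make explicit in the proof that this theorem is the $\tau=0$ base case on which that later transcendental analysis builds, rather than a claim of stability uniform in $\tau$.
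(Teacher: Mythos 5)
Your proposal matches the paper's own argument exactly: the paper sets $\tau=0$, reduces \eqref{hiv8} to the cubic $\lambda^3+a_2\lambda^2+(a_1+b_1)\lambda+(a_0+b_0)=0$, notes $a_2>0$ from the manifestly positive form of its terms, and invokes the Routh--Hurwitz criterion with $(H1)$ supplying the remaining two inequalities. Your added clarification about scope (that this is the delay-free base case, with the $\tau>0$ analysis deferred to the subsequent transcendental/Hopf study) is consistent with how the paper frames the result.
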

Now we analyze if it is possible to have a complex root with positive real part for the case $\tau >0$, assuming (H1) satisfies, note that $\lambda =0$ is not a root of \eqref{hiv8} because $a_0+b_0>0$. Now suppose that $\lambda =i\omega$, with $\omega >0$, is a root of \eqref{hiv8} so the next equation must be satisfied by $\omega$
\[ -\omega ^3i-a_2\omega ^2+a_1\omega i+a_0+(b_1\omega i+b_0)(\cos (\omega \tau)+\sin (\omega \tau)i)=0.\]
Separating again the real and imaginary parts, we have the following system
\begin{equation} \begin{aligned} \label{hiv10}
a_2^2\omega ^2+a_0&=-b_0\cos (\omega \tau)-b_1\omega \sin (\omega \tau),\\
-\omega ^3+a_1\omega &=-b_1\cos (\omega \tau)+b_0\sin (\omega \tau).
\end{aligned} \end{equation}
Now, we square both sides of each equation above and add the resulting equations, to obtain the following sixth degree equation for $\omega $
\begin{equation} \label{hiv11}
\omega ^6+(a_2-2a_1)\omega ^4+(a_1^2-2a_2a_0-b_1^2)\omega ^2+a_0^2-b_0^2=0.
\end{equation}
Let
\[ z=\omega ^2, \quad A=a_2-2a_1, \quad B=a_1^2-2a_2a_0-b_1^2, \quad C=a_0^2-b_0^2\]
then equation \eqref{hiv11} becomes the third order equation in $z$
\begin{equation} \label{hiv12}
z^3+Az^2+Bz+C=0.
\end{equation}

Suppose that \eqref{hiv12} at least a positive root, let $z_0$ the small value for this roots. Then equation \eqref{hiv11} has the root $\omega _0=\sqrt{z_0}$ then form \eqref{hiv10} obtain the value of $\tau$ associated with this $\omega _0$ such that $\lambda =\omega i$ is an purely imaginary root of  \eqref{hiv8}
\[
\tau _0=\frac{1}{\omega _0}\arccos \left[ \frac{b_0(a_2\omega_0 ^2-a_0)+b_1\omega_0 (\omega_0 ^3-a_1\omega_0 )}{b_0^2+b_1^2\omega_0^ 2}\right]
\]
Then we have the following result, from lemma 2.1 from Ruan \cite{ruan}
\begin{theorem}
Suppose (H1) hold.
\begin{itemize}
\item[(a)] If $C\geq 0$ and $\Delta =A^2-3B<0$, then all roots of equation \eqref{hiv8} have negative real parts for all $\tau \geq 0$, then the infected equilibrium $E_2$ is locally asymptotically stable.
\item[(b)] If $C<0$ or $C\geq 0$, $z_1>0$ and $z_1^3+Az_1^2+Bz_1+C\leq 0$, the all roots of equation \eqref{hiv8} have negative real parts when $\tau \in [0,\tau _0)$, then the infected equilibrium $E_2$ is locally asymptotically stable in $[0,\tau _0)$.
\end{itemize}
\end{theorem}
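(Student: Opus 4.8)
The plan is to combine the continuity of the roots of the characteristic equation \eqref{hiv8} in $\tau$ with the reduction to the cubic \eqref{hiv12}. Theorem 1 already guarantees that, under (H1), every root of \eqref{hiv8} lies in the open left half-plane when $\tau=0$. Since the roots of a retarded equation depend continuously on $\tau$ and, in any half-plane $\operatorname{Re}\lambda\ge-\gamma$, are finite in number (Kuang \cite{kuang}), the equilibrium $E_2$ can lose stability only when a root crosses the imaginary axis, that is, when a purely imaginary root $\lambda=i\omega$ ($\omega>0$) appears. The computation already carried out shows that such an $\omega$ forces $z=\omega^2$ to be a positive root of the cubic $h(z)=z^3+Az^2+Bz+C$ in \eqref{hiv12}. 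The whole argument therefore reduces to deciding whether $h$ admits positive roots.

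For part (a), I would study $h'(z)=3z^2+2Az+B$, whose discriminant equals $4(A^2-3B)=4\Delta$. When $\Delta<0$ the derivative has no real zero and, having positive leading coefficient, satisfies $h'(z)>0$ for all $z$, so $h$ is strictly increasing. Together with $h(0)=C\ge 0$ this gives $h(z)>0$ for every $z>0$, so $h$ has no positive root. Consequently \eqref{hiv8} has no purely imaginary root for any $\tau$, no crossing can occur, and the continuity argument keeps all roots in the left half-plane for every $\tau\ge 0$; hence $E_2$ is locally asymptotically stable for all $\tau\ge 0$.

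For part (b), I would instead exhibit a positive root of $h$. If $C<0$ then $h(0)=C<0$ while $h(z)\to+\infty$, so the intermediate value theorem yields a positive root. If $C\ge 0$, then $\Delta\ge 0$ is needed for real critical points; writing $z_1=\tfrac{1}{3}(-A+\sqrt{\Delta})$ for the right-hand critical point (the local minimizer of $h$), the hypotheses $z_1>0$ and $z_1^3+Az_1^2+Bz_1+C\le 0$ place a nonpositive value of $h$ to the right of the origin, so with $h(0)\ge 0$ and $h(+\infty)=+\infty$ a positive root again exists. In either case let $z_0$ be the smallest positive root, set $\omega_0=\sqrt{z_0}$, and recover $\tau_0$ from \eqref{hiv10} as in the statement. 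Since no purely imaginary root exists for $\tau\in[0,\tau_0)$, no root crosses the imaginary axis there, and the continuity argument delivers local asymptotic stability of $E_2$ on $[0,\tau_0)$. This is exactly Lemma 2.1 of Ruan \cite{ruan} specialized to our cubic, so the conclusion follows.

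The step I expect to demand the most care is the passage from ``no purely imaginary root on $[0,\tau_0)$'' to ``all roots remain in the left half-plane on $[0,\tau_0)$.'' This requires the continuity of the countably many roots in $\tau$ and the impossibility of a root reaching the right half-plane without first hitting the imaginary axis, which is the standard consequence of the compactness and continuity properties for retarded equations in Kuang \cite{kuang}. A secondary subtlety is that squaring in passing from \eqref{hiv10} to \eqref{hiv11} may create spurious frequencies, so when defining $\tau_0$ one must confirm that the candidate pair $(\omega_0,\tau_0)$ actually satisfies the unsquared system \eqref{hiv10}; this consistency is what is encoded in the $\arccos$ formula for $\tau_0$.
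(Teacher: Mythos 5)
Your proposal follows the same route as the paper: stability at $\tau=0$ from Theorem 1 under (H1), reduction of purely imaginary roots $\lambda=i\omega$ of \eqref{hiv8} to positive roots $z=\omega^2$ of the cubic $h(z)=z^3+Az^2+Bz+C$ in \eqref{hiv12}, and a continuity/crossing argument to propagate stability to $\tau>0$. The difference is one of completeness rather than of method: the paper's entire proof is the phrase ``from lemma 2.1 from Ruan \cite{ruan}'', while you reconstruct the content of that lemma yourself --- the observation that the discriminant of $h'(z)=3z^2+2Az+B$ equals $4\Delta$, so $\Delta<0$ makes $h$ strictly increasing and $C\ge 0$ then excludes positive roots (part (a)); the intermediate value theorem producing a positive root when $C<0$, or when $C\ge 0$, $z_1=\frac{1}{3}\left(-A+\sqrt{\Delta}\right)>0$ and $h(z_1)\le 0$ (part (b)); and the fact, taken from Kuang \cite{kuang}, that roots of a retarded characteristic equation can move into the right half-plane only by crossing the imaginary axis. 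You also do the paper a service by identifying $z_1$ as the local minimizer of the cubic, a quantity the paper's statement uses but never defines.

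There is, however, one step in your write-up that is not justified --- and the paper, by burying everything in the citation, carries the same defect in its definition of $\tau_0$. In case (b) the cubic can have several positive roots (three are possible when $C<0$, two when $C\ge 0$ and $h(z_1)<0$), and each positive root $z_k$ yields $\omega_k=\sqrt{z_k}$ together with its own sequence of critical delays obtained from \eqref{hiv10}. Your claim that ``no purely imaginary root exists for $\tau\in[0,\tau_0)$'' is valid only if $\tau_0$ is the minimum of all these critical delays over all positive roots and all branches; it does not follow from taking $z_0$ to be the smallest positive root, because each critical delay is $1/\omega$ times an $\arccos$ value lying in $[0,\pi]$, so a larger root $\omega_k$ can perfectly well produce an earlier crossing. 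The fix is to define $\tau_0$ as that global minimum, which is how Ruan and Wei actually state and prove their result. A second, smaller point: the $\arccos$ formula pins down $\cos(\omega_0\tau_0)$ but not the sign of $\sin(\omega_0\tau_0)$ demanded by \eqref{hiv10}, so, contrary to your closing remark, it does not by itself discard the spurious frequencies introduced by squaring; when the unsquared system forces $\sin(\omega_0\tau_0)<0$, the critical delay must be taken from the branch $\frac{1}{\omega_0}\left(2\pi-\arccos(\cdot)\right)$ instead.
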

\subsection{Global Analysis}
In this section we study the global stability of the equilibria, the method to prove is to construct a Lyapunov functional
\begin{theorem}
The infection-free equilibrium $E_1(T_0,0,0)$, with $T_0$ defined in \eqref{hiv45}, of system \eqref{hiv3} is globally asymptotically stable if $R_0\leq 1$.
\end{theorem}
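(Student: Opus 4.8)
The plan is to construct a Volterra-type Lyapunov functional and apply LaSalle's invariance principle. First I would record two preliminaries: positivity (Lemma~1) and the a priori bound $\limsup_{t\to\infty}T(t)\le T_0$, obtained by comparison from $\dot T\le s-dT+aT(1-T/T_{\max})$ after discarding the non-positive terms $-aTI/T_{\max}$ and $-bTV/(1+\alpha V)$; together with the third equation this gives boundedness of the whole orbit, which is exactly what the infinite-dimensional LaSalle principle requires. I would also rewrite the equilibrium relation for $E_1$ as $a(1-T_0/T_{\max})=d-s/T_0$, equivalently $(a-d)-aT_0/T_{\max}=-s/T_0$.

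Next I would propose the functional
\[L(t)=\left(T-T_0-T_0\ln\frac{T}{T_0}\right)+I+\frac{bT_0}{c}\,V+b\int_{t-\tau}^{t}\frac{T(\xi)V(\xi)}{1+\alpha V(\xi)}\,d\xi,\]
which is non-negative and vanishes only at $E_1$. The weight $bT_0/c$ on $V$ and the integral term are chosen precisely so that, upon differentiating along solutions, the delayed infection term produced by $\dot I$ cancels the derivative of the integral, while the $-cV$ contribution from $\dot V$ merges with the surviving $bT_0V/(1+\alpha V)$ into the single non-positive term $-bT_0\alpha V^2/(1+\alpha V)$.

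Then I would carry out the differentiation. Writing $\dot T=h(T)-aTI/T_{\max}-bTV/(1+\alpha V)$ with $h(T)=s-dT+aT(1-T/T_{\max})$ and using the equilibrium identity, the factor $(1-T_0/T)h(T)$ collapses to $-\frac{(T-T_0)^2}{T}\left(\frac{s}{T_0}+\frac{aT}{T_{\max}}\right)$. The crucial step is that the remaining quadratic contributions in $(T-T_0)$ and $I$, namely $-\frac{a(T-T_0)^2}{T_{\max}}$, the cross term $-\frac{2aI(T-T_0)}{T_{\max}}$, and $-\frac{aI^2}{T_{\max}}$, assemble into the \emph{perfect square} $-\frac{a}{T_{\max}}(T-T_0+I)^2$, while the coefficient of the linear $I$-term reduces to $\mu(R_0-1)$ after substituting $\mu R_0=bpT_0/c+a(1-T_0/T_{\max})$. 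The outcome is
\[\dot L=-\frac{a}{T_{\max}}(T-T_0+I)^2-\frac{s(T-T_0)^2}{T_0 T}-\mu(1-R_0)I-\frac{bT_0\alpha V^2}{1+\alpha V},\]
which is $\le 0$ whenever $R_0\le 1$.

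Finally I would invoke LaSalle's invariance principle: solutions approach the largest invariant subset of $\{\dot L=0\}$. On that set $T=T_0$ forces $I=0$ through the perfect-square term (and the $V$-term, or the invariance of $\dot V=pI-cV$, forces $V=0$), so the only invariant set is $\{E_1\}$, giving global asymptotic stability. I expect the genuinely delicate point to be this cancellation of the mitotic cross terms: the logistic proliferation $aI(1-(T+I)/T_{\max})$ of infected cells is large when $T$ is small, and a naive functional leaves an indefinite cross term $-2aI(T-T_0)/T_{\max}$; only the specific Volterra weight on $T$ makes these recombine into a sign-definite square. A secondary technicality is justifying boundedness and precompactness of orbits so that the LaSalle principle for functional differential equations applies.
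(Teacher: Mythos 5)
Your proposal is correct and takes essentially the same approach as the paper: your functional $L$ is literally the paper's $U$, since $\int_{T_0}^{T}\frac{\sigma-T_0}{\sigma}\,d\sigma = T-T_0-T_0\ln\frac{T}{T_0}$ and the two delay integrals coincide under the substitution $\xi=t-\omega$, and the subsequent perfect-square assembly, the reduction of the linear $I$-coefficient to $\mu(R_0-1)$, and the LaSalle conclusion all match. The only (minor, and in fact favorable) difference is that you keep the exact nonpositive term $-\frac{bT_0\alpha V^2}{1+\alpha V}$ where the paper instead estimates $-bT_0V\leq-\frac{bT_0V}{1+\alpha V}$, which makes your identification of $\{\dot L=0\}=\{T=T_0,\,I=0,\,V=0\}$ slightly cleaner.
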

\begin{proof}
Define the Lyapunov functional
\[ U(t)=\int _{T_0}^T \frac{\sigma -T_0}{\sigma}d\sigma +I+\frac{bT_0}{c}V+b\int _0^\tau \frac{T(t-\omega)V(t-\omega)}{1+\alpha V(t-\omega)}d\omega\]
$U$ is defined and is continuous for any positive solution $(T(t),I(t),V(t))$ of system \eqref{hiv3} and $U=0$ at $E_1(T_0,0,0)$. And calculating the derivative of $U(t)$ along positive solutions of \eqref{hiv3}, it follows that
\[ \begin{aligned}
\frac{dU}{dt} =& \frac{T-T_0}{T}\dot T(t)+\dot I(t)+\frac{bT_0}{c}\dot V-b\int _0^\tau \frac{d}{d\omega}\frac{T(t-\omega)V(t-\omega)}{1+\alpha V(t-\omega)}d\omega \\
=& \frac{T-T_0}{T}\left( s-dT+aT\left( 1-\frac{T+I}{T_{\max}}\right)-\frac{bTV}{1+\alpha V}  \right)+ \frac{bT(t-\tau)V(t-\tau)}{1+\alpha V(t-\tau )}-\mu I\\
&+aI\left( 1-\frac{I+T}{T_{\max}} \right)
+\frac{bT_0}{c}(pI-cV)-\frac{bT(t-\tau)V(t-\tau)}{1+\alpha V(t-\tau )}+\frac{bTV}{1+\alpha V}\\
=&(T-T_0)\left( \frac{s}{T}-d+a\left( 1-\frac{T+I}{T_{\max}} \right)-\frac{bV}{1+\alpha V} \right) -\mu I + aI\left( 1-\frac{I+T}{T_{\max}} \right)\\
&+ \frac{bT_0}{c}(pI-cV) +\frac{bTV}{1+\alpha V}
\end{aligned} \]
using $a-d=\dfrac{aT_0}{T_{\max}}-\dfrac{s}{T_0}$ and simplifying, we get
\[ \begin{aligned}
\frac{dU}{dt} =& (T-T_0)\left( -s\left( \frac{T-T_0}{TT_0} \right) -\frac{a}{T_{\max}} (T-T_0)-\frac{aI}{T_{\max}}-\frac{bV}{1+\alpha V}\right) -\mu I + aI\left( 1-\frac{I+T}{T_{\max}} \right) \\&+ \frac{bT_0}{c}(pI-cV) +\frac{bTV}{1+\alpha V}\\
=& -s\frac{(T-T_0)^2}{TT_0}-\frac{a}{T_{\max}}(T-T_0)^2 -\frac{a}{T_{\max}}I(T-T_0)-\frac{b(T-T_0)V}{1+\alpha V}-\mu I+aI-\frac{a}{T_{\max}}IT\\
&-\frac{a}{T_{\max}}I^2 +\frac{bpT_0}{c}I-bT_0V+\frac{bTV}{1+\alpha V}\\
\leq & -s\frac{(T-T_0)^2}{TT_0}-\frac{a}{T_{\max}}(T-T_0)^2 -\frac{a}{T_{\max}}I(T-T_0)-\frac{b(T-T_0)V}{1+\alpha V}-\mu I+aI-\frac{a}{T_{\max}}IT\\
&-\frac{a}{T_{\max}}I^2 +\frac{bpT_0}{c}I-\frac{bT_0V}{1+\alpha V}+\frac{bTV}{1+\alpha V} +\frac{a}{T_{\max}}IT_0-\frac{a}{T_{\max}}IT_0\\
= & -s\frac{(T-T_0)^2}{TT_0}-\frac{a}{T_{\max}}(T-T_0)^2 -\frac{2a}{T_{\max}}I(T-T_0)-\mu I+aI-\frac{a}{T_{\max}}I^2 +\frac{bpT_0}{c}I \\
&-\frac{a}{T_{\max}}IT_0\\
=&-s\frac{(T-T_0)^2}{TT_0}-\frac{a}{T_{\max}}[(T-T_0)+I]^2+ I\left( a \left( 1-\frac{T_0}{T_{\max}}\right) +\frac{bT_op}{c}-\mu \right)
\end{aligned} \]
note that
\[ a \left( 1-\frac{T_0}{T_{\max}}\right) +\frac{bT_op}{c}-\mu=\mu (R_0-1)\leq 0,\]
since $R_0\leq 1$, therefore
\[
\dot L(t)\leq -\left( s\frac{(T-T_0)^2}{TT_0}+ \frac{a}{T_{\max}}[(T-T_0)^2+I]+\mu (1-R_0)I \right)<0
\]
If $R_0\leq 1$ then $\dfrac{dU}{dt}\leq 0$ from corollary 5.2 in \cite{kuang}, $E_1$ is globally asymptotically stable. Also, for $R_0=1$, $\dfrac{dU}{dt}(t)=0$ if and only if $T(t)=T_0$ and $I(t)=0$ while in the case $R_0<1$, $\dfrac{dU}{dt}(t)=0$ if and only if $T(t)=T_0$ and $I(t)=0$. Therefore, the largest invariant set in $\left\{ (T(t),I(t),V(t)):\dfrac{dU}{dt}=0\right\}$ when $R_0\leq 1$ is ${E_1(T_0,0,0)}$.  By the classical Lyapunov-LaSalle invariance principle (theorem 5.3 in \cite{kuang}), $E_1$ is globally asymptotically stable.
\end{proof}

In the following, we consider the global asymptotic stability of a unique infected  equilibrium $E_2$. We construct an Lyapunov functional for infected equilibrium, using suitable combinations of the Lyapunov functions given in \cite{Korobeinikov} and the Volterra--type functionals  \cite{McCluskey,CVDL11}.
\begin{theorem} \label{GASinfected}
If $R_0>1$  and $a\leq d+\frac{a}{T_{\max}}[T_2+I_2]$, then the unique infected equilibrium $E_2$ of (\ref{hiv3}) is globally asymptotically stable for any $\tau\geq0$.
\end{theorem}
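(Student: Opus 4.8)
The plan is to construct a Volterra-type Lyapunov functional and invoke the LaSalle invariance principle, in the spirit of \cite{Korobeinikov,McCluskey,CVDL11}. Throughout, write $g(x)=x-1-\ln x\geq 0$ (with equality only at $x=1$), abbreviate $h(V)=\frac{V}{1+\alpha V}$, and let $f_2=\frac{bT_2V_2}{1+\alpha V_2}$ denote the incidence at $E_2$. I would take
\[
U=\left(T-T_2-T_2\ln\frac{T}{T_2}\right)+\left(I-I_2-I_2\ln\frac{I}{I_2}\right)+\frac{bT_2}{c(1+\alpha V_2)}\left(V-V_2-V_2\ln\frac{V}{V_2}\right)+f_2\int_0^{\tau}g\!\left(\frac{bT(t-\omega)V(t-\omega)}{f_2\,(1+\alpha V(t-\omega))}\right)d\omega .
\]
Each bracketed term is nonnegative and vanishes only at $E_2$, and the integral is nonnegative, so $U\geq 0$ with $U=0$ only at $E_2$. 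The weights $\frac{bT_2}{c(1+\alpha V_2)}$ and $f_2$ are not ad hoc: they are exactly the values that force the plain delayed-incidence contribution and the linear $I$-terms to cancel once $pI_2=cV_2$ is used.

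Next I would differentiate $U$ along \eqref{hiv3} and eliminate $s$ and $\mu$ through the equilibrium identities $s=(d-a\beta)T_2+f_2$ and $\mu=a\beta+f_2/I_2$, where $\beta:=1-\frac{T_2+I_2}{T_{\max}}$. The target-cell block then collapses to $-(d-a\beta)\frac{(T-T_2)^2}{T}+f_2\big(1-\frac{T_2}{T}\big)$, while the mitotic cross terms from the $T$- and $I$-equations assemble into the single perfect square $-\frac{a}{T_{\max}}\big[(T-T_2)+(I-I_2)\big]^2$. This is precisely where the hypothesis enters: the inequality $a\leq d+\frac{a}{T_{\max}}[T_2+I_2]$ is equivalent to $d-a\beta\geq 0$, which renders the coefficient of $\frac{(T-T_2)^2}{T}$ nonpositive; without it this term is of indefinite sign and the argument fails.

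The core of the argument is the bookkeeping of the remaining terms, all carrying the factor $f_2$. Writing $G(t)=\frac{bT(t)V(t)}{1+\alpha V(t)}$ for the current incidence, the derivative of the delay integral equals $G(t)-G(t-\tau)-f_2\ln\frac{G(t)}{G(t-\tau)}$; after its plain $G(t)$ and $G(t-\tau)$ parts cancel against the corresponding undelayed and delayed incidence contributions, I would show the surviving $f_2$-terms reorganize as
\[
-f_2\left[g\!\left(\frac{T_2}{T}\right)+g\!\left(\frac{I_2\,G(t-\tau)}{I\,f_2}\right)+g\!\left(\frac{I\,V_2}{I_2\,V}\right)\right]+f_2\,S(V),
\]
where $S(V)=\frac{h(V)}{h(V_2)}-\frac{V}{V_2}+\ln\frac{1+\alpha V}{1+\alpha V_2}$ is the saturation correction produced by the factor $1+\alpha V$. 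The three $g$-terms are manifestly nonpositive; the crux is $S(V)\leq 0$, which I would establish by noting $S(V_2)=0$ together with $S'(V)=\frac{\alpha(V_2-V)(2+\alpha V)}{V_2(1+\alpha V)^2}$, so that $S$ increases on $(0,V_2)$, decreases on $(V_2,\infty)$, and attains its maximum value $0$ at $V=V_2$. The delicate point—and the step I expect to be the main obstacle—is matching the logarithmic parts of the three $g$-functions against $-f_2\ln\frac{G(t)}{G(t-\tau)}$ (using $G(t)/f_2=\frac{T}{T_2}\frac{h(V)}{h(V_2)}$) and checking that the leftover collapses \emph{exactly} to $S(V)$ rather than to some sign-indefinite residue.

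Assembling the pieces gives
\[
\dot U=-(d-a\beta)\frac{(T-T_2)^2}{T}-\frac{a}{T_{\max}}\big[(T-T_2)+(I-I_2)\big]^2-f_2\sum_i g(\cdot_i)+f_2\,S(V)\leq 0,
\]
every summand being nonpositive under the hypothesis. Since $\dot U=0$ forces $T=T_2$, $V=V_2$, and (through the $g$-terms) $I=I_2$, the largest invariant subset of $\{\dot U=0\}$ is the singleton $\{E_2\}$; the LaSalle invariance principle (Theorem 5.3 in \cite{kuang}) then yields global asymptotic stability of $E_2$ for every $\tau\geq 0$.
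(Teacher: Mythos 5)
Your proposal is correct and is essentially the paper's own argument: your functional $U$ coincides exactly with the Volterra--type functional $\widehat{L}+\frac{bT_2V_2}{1+\alpha V_2}L_+$ that the paper records immediately after its proof of Theorem \ref{GASinfected}, and your derivative computation (the equilibrium identities eliminating $s$ and $\mu$, the perfect square $-\frac{a}{T_{\max}}[(T-T_2)+(I-I_2)]^2$, the role of $a\leq d+\frac{a}{T_{\max}}[T_2+I_2]$ in making the $\frac{(T-T_2)^2}{T}$ coefficient nonpositive, and the concluding LaSalle step) proceeds identically. The only cosmetic difference is that you verify the saturation correction $S(V)\leq 0$ by a monotonicity argument, whereas the paper writes that same quantity as $-\frac{bT_2V_2}{1+\alpha V_2}\left(\frac{1+\alpha V}{1+\alpha V_2}-1-\ln\frac{1+\alpha V}{1+\alpha V_2}\right)-\alpha bT_2\frac{(V-V_2)^2}{(1+\alpha V)(1+\alpha V_2)^2}$, which is manifestly nonpositive and agrees identically with $\frac{bT_2V_2}{1+\alpha V_2}S(V)$.
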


\begin{proof}
Define a Lyapunov functional for $E_2$,
\begin{align*}
L(t)&=\widetilde{L}(t)+ \frac{bT_2V_2}{1+\alpha V_2} L_+(t),
\end{align*}
where
\begin{align*}
\widetilde{L}&=\int_{T_2}^{T}\frac{(\sigma -T_2)}{\sigma} d\sigma +\int_{I_2}^{I} \frac{(\sigma -I_2)}{\sigma} d\sigma +\frac{bT_2V_2}{pI_2(1+\alpha V_2)} \int_{V_2}^{V}\left(1-\frac{V_2(1+\alpha \sigma)}{\sigma(1+\alpha V_2)}\right)d\sigma,
\end{align*}
and
\begin{align*}
L_+&=\int^{\tau}_{0}\left(\frac{T(t-\omega)V(t-\omega)(1+\alpha V_2)}{T_2V_2(1+\alpha V(t-\omega))} -1- \ln\frac{T(t-\omega)V(t-\omega)(1+\alpha V_2)}{T_2V_2 (1+\alpha V(t-\omega))}\right)d\omega.
\end{align*}
At infected equilibrium, we have
\begin{eqnarray}
a-d&=&-\frac{s}{T_2}+\frac{bV_2}{1+\alpha V_2}+ \frac{ a}{T_{\max}}(T_2+I_2),\label{identities3}\\
a-\mu &=&-\frac{bT_2V_2}{I_2(1+\alpha V_2)}+\frac{a}{ T_{\max}}(T_2+I_2), \label{identities4}\\
c&=&p\frac{I_2}{V_2}. \label{identities5}
\end{eqnarray}
The derivative of $\widetilde{L}$ with respect to $t$ along the solutions of (\ref{hiv3}), we get
\begin{align*}
\frac{d\widetilde{L}}{dt}&=\frac{(T-T_2)}{T}\dot T +\frac{(I-I_2)}{I} \dot I+\frac{bT_2V_2}{pI_2(1+\alpha V_2)}\left(1-\frac{V_2(1+\alpha V)}{V(1+\alpha V_2)}\right)\dot V,\\
&=(T -T_2)\left(\frac{s}{T}-\frac{a}{T_{\max}}(T+I)- \frac{bV}{1+\alpha V}+a-d\right)\\
&+(I-I_2)\left(\frac{bT(t-\tau)V(t-\tau)}{
I(1+\alpha V(t-\tau))}-\frac{a}{T_{\max}}(T+I)+a-\mu  \right)\\
&+\frac{bT_2V_2}{pI_2(1+\alpha V_2)}\left(1 -\frac{V_2 (1+\alpha V)}{V(1+\alpha V_2)}\right)(pI-cV).\end{align*}
Using (\ref{identities3})--(\ref{identities4}) and
(\ref{identities5}), we get

\begin{align*}
\frac{d\widetilde{L}}{dt}&=(T-T_2)\left(-s\frac{(T
-T_2)}{TT_2}-\frac{a}{T_{\max}}[(T-T_2)+(I-I_2)]-b
\left(\frac{V}{1+\alpha V}-\frac{V_2}{1+\alpha V_2}\right) \right)\\
&+(I-I_2)\left( b\left(\frac{T(t-\tau)V(t-\tau)}{ I(1+\alpha V(t-\tau))} -\frac{T_2V_2}{I_2(1+\alpha
V_2)}\right)-\frac{a}{T_{\max}}[(T-T_2)+(I -I_2)] \right)\\
&+\frac{bT_2V_2}{pI_2(1+\alpha V_2)}\left(1 - \frac{V_2(1+\alpha V)}{V(1+\alpha V_2)}\right)\left(p I
-p I_2\frac{V}{V_2}\right).
\end{align*}

Cancelling identical terms with opposite signs and collecting terms, yields

\begin{align*}
\frac{d\widetilde{L}}{dt}&=-s\frac{(T-T_2)^{2}}{TT_2}-\frac{a}{T_{\max}}[(T-T_2)+(I-I_2)]^{2}\\
&+ \frac{bT_2V_2}{1+\alpha V_2} \left(-\frac{TV(1+ \alpha V_2)}{T_2V_2(1+\alpha V)}+ \frac{T(t-\tau) V(t-\tau)(1+\alpha V_2)}{T_2V_2(1+\alpha V(t-\tau))} \right)\\
&+ \frac{bT_2V_2}{1+\alpha V_2} \left(-\frac{IV_2 (1+\alpha V)}{I_2V(1+\alpha V_2)}- \frac{T(t-\tau) I_2V(t-\tau)(1+\alpha V_2)}{T_2IV_2(1+\alpha V(t-\tau))}\right)\\
&+ \frac{bT_2V_2}{1+\alpha V_2} \left(\frac{T}{T_2}+\frac{V(1+\alpha V_2)}{V_2(1+\alpha
V)}-\frac{V}{V_2}+\frac{(1+\alpha V)}{(1+\alpha
V_2)}\right).\end{align*}
We can rewrite $\dfrac{d\widetilde{L}}{dt}$ as
\begin{align*}
\frac{d\widetilde{L}}{dt}&=-s\frac{\left( T
-T_2\right)^{2}}{TT_2}-\frac{a}{T_{\max}
}[(T-T_2)+(I -I_2)]^{2}\\
&+\frac{bT_2V_2}{1+\alpha V_2} \left(-\frac{TV(1+\alpha V_2)}{T_2V_2(1+\alpha V)}+\frac{T(t-\tau)V(t-\tau)(1+\alpha V_2)}{T_2V_2(1+\alpha V(t-\tau))}\right)\\
&+ \frac{bT_2V_2}{1+\alpha V_2}\left(3-\frac{T_2}{T} -\frac{IV_2(1+\alpha V)}{I_2V(1+\alpha V_2)}- \frac{T(t-\tau)I_2V(t-\tau)(1+\alpha V_2)}{T_2IV_2 (1+\alpha V(t-\tau))}\right)\\
&+\frac{bT_2V_2}{1+\alpha V_2} \left(\frac{V(1+\alpha V_2)}{V_2(1+\alpha V)}-\frac{V} {V_2}+\frac{(1+\alpha V)}{(1+\alpha V_2)}-1\right)+\frac{bT_2V_2}{1+\alpha V_2} \left(\frac{T_2}{T}+\frac{T}{T_2}-2\right),\end{align*}
replacing the term $\dfrac{T}{T_2}+\dfrac{T_2}{T}-2$ by
$\dfrac{(T-T_2)^2}{T_2}$,
\begin{align*}
\frac{d\widetilde{L}}{dt}&=-\left(s- \frac{bT_2V_2}{1+\alpha V_2}\right)\frac{(T-T_2)^{2}}{TT_2}-\frac{a}{  T_{\max}}[(T-T_2)+(I-I_2)]^2\\
&+ \frac{bT_2V_2}{1+\alpha V_2}
\left(-\frac{TV(1+ \alpha V_2)}{T_2V_2(1+\alpha V)} +\frac{T(t-\tau) V(t-\tau)(1+\alpha V_2)}{T_2V_2(1+\alpha V(t-\tau))} \right)\\
&+ \frac{bT_2V_2}{1+\alpha V_2} \left(3-\frac{T_2}{T}-\frac{IV_2(1+\alpha V)}{I_2V(1+\alpha V_2)} -\frac{T(t-\tau)I_2V(t-\tau)(1+\alpha V_2)}{T_2IV_2(1+\alpha V(t-\tau))}\right)\\
&+ \frac{bT_2V_2}{1+\alpha V_2} \left(1 -\frac{V_2(1 +\alpha V)}{V(1+\alpha V_2)} \right)\left(\frac{V(1+ \alpha V_2)}{V_2(1+\alpha V)}-\frac{V}{V_2}\right).
\end{align*}

Using $s-\dfrac{bT_2V_2}{1+\alpha V_2}=(
d-a)T_2+\frac{aT_2}{T_{\max}}[T_2+I_2]$, we get

\begin{align*}
\frac{d\widetilde{L}}{dt}&=-\left(d-a+\frac{a}{
T_{\max}}[T_2+I_2]\right)\frac{\left(T
-T_2\right)^2}{T}-\frac{a}{T_{\max}
}[(T-T_2)+(I-I_2)]^2\\
&+\frac{bT_2V_2}{1+\alpha V_2} \left(-\frac{TV(1+\alpha V_2)}{T_2V_2(1+\alpha V)}+\frac{T(t-\tau)V(t-\tau)(1+\alpha V_2)}{T_2V_2(1+\alpha V(t-\tau))}\right)\\
&+ \frac{bT_2V_2}{1+\alpha V_2} \left(3-\frac{T_2}{T}-\frac{IV_2(1+\alpha V)}{I_2V(1+\alpha V_2)}-\frac{T(t-\tau)I_2V(t-\tau)(1+\alpha V_2)}{T_2IV_2(1+\alpha V(t-\tau))}\right)\\
&-\alpha bT_2\frac{\left(V-V_2\right)^2}{(1+\alpha
V)(1+\alpha V_2)^2}.\end{align*}
It is easy to see that
\begin{align*}
\frac{dL_+}{dt}&=\frac{d}{dt}\int^{\tau}_{0} \left(\frac{T(t-\omega)V(t-\omega)(1+\alpha V_2)}{T_2V_2(1+\alpha V(t-\omega))}-1-\ln\frac{T(t-\omega) V(t-\omega)(1+\alpha V_2)}{T_2V_2(1+\alpha V(t-\omega))}\right)d\omega,\\
&=\int^{\tau}_{0}\frac{d}{dt}\left(\frac{T(t-\omega) V(t-\omega)(1+\alpha V_2)}{T_2V_2(1+\alpha V(t-\omega))}-1-\ln\frac{T(t-\omega)V(t-\omega)(1+\alpha V_2)}{T_2V_2(1+\alpha V(t-\omega))}\right) d\omega,\\
&=-\int^{\tau}_{0}\frac{d}{d\omega}\left(\frac {T(t-\omega)V(t-\omega)(1+\alpha V_2)}{T_2V_2(1+\alpha V(t-\omega))}-1- \ln \frac {T(t-\omega)V(t-\omega)(1+\alpha V_2)}{T_2V_2(1+\alpha V(t-\omega))} \right)d\omega,\\
&=-\left[\frac{T(t-\omega)V(t-\omega)(1+\alpha V_2)}{T_2V_2(1+\alpha V(t-\omega))}-1-\ln\frac{T(t-\omega) V(t-\omega)(1+\alpha V_2)}{T_2V_2(1+\alpha V(t-\omega))}\right]^{\tau}_{\omega=0},\\
&=-\frac{T(t-\tau)V(t-\tau)(1+\alpha V_2)}{T_2V_2 (1+\alpha V(t-\tau))}+\frac{TV(1+\alpha V_2)}{T_2V_2(1+\alpha V)}+\ln\frac{T(t-\tau)V(t-\tau)(1+\alpha V_2)}{T_2V_2(1+\alpha V(t-\tau))}\\
&+\ln\frac{T_2V_2(1+\alpha V)}{TV(1+\alpha V_2)},\\
&=-\frac{T(t-\tau)V(t-\tau)(1+\alpha
V_2)}{T_2V_2(1+\alpha V(t-\tau))}+\frac{TV(1+\alpha
V_2)}{T_2V_2(1+\alpha V)}+\ln\frac{T(t-\tau)I_2 V(t-\tau)(1+\alpha V_2)}{T_2IV_2(1+\alpha V(t-\tau))}\\
&+\ln\frac{T_2}{T}+\ln\frac{IV_2(1+\alpha
V)}{I_2V(1+\alpha V_2)}.
\end{align*}
Since
\begin{align*}
\frac{dL}{dt}&=\frac{d\widetilde{L}}{dt}+ \frac{bT_2V_2}{1+\alpha V_2} \frac{dL_+}{dt},
\end{align*}
we obtain
\begin{align*}
\frac{dL}{dt}&=-\left(d-a+\frac{a}{T_{\max}
}[T_2+I_2]\right)\frac{(T-T_2)^2}{T}-\frac{a}{  T_{\max}}[(T-T_2)+(I-I_2)]^2\\
&-\frac{bT_2V_2}{1+\alpha V_2} \left(\frac{T_2}{T}-1- \ln\frac{T_2}{T}\right)- \frac{bT_2V_2}{1+\alpha V_2}
\left(\frac{IV_2(1+\alpha V)}{I_2V(1+\alpha V_2)} -1-\ln\frac{IV_2(1+\alpha V)}{I_2V(1+\alpha V_2)}\right)\\
&- \frac{bT_2V_2}{1+\alpha V_2} \left(\frac{T(t-\tau) I_2V(t-\tau)(1+\alpha V_2)}{T_2IV_2(1+\alpha V(t-\tau))}-1-\ln\frac{T(t-\tau)I_2V(t-\tau)(1+\alpha V_2)}{T_2IV_2(1+\alpha V(t-\tau))}\right)\\
&-\alpha bT_2\frac{\left(V-V_2\right)^2}{(1+\alpha
V)(1+\alpha V_2)^2}.\end{align*}

\vglue.5cm
\noindent Thus, $a\leq d  +\dfrac{a}{T_{\max}}[T_2+I_2]$ implies that $dL/dt\leq0$. By Corollary 5.2 in \cite{kuang}, solutions limit to $\mathbb{M}$, the largest invariant subset of $\left\{dL/dt=0\right\}$. Furthermore, $dL/dt=0$ if and only if $T(t)=T(t-\tau)=T_2$, $V(t)=V(t-\tau)=V_2$ and
$I(t)=I_2$. Therefore the largest compact invariant set in $\mathbb{M}$ is the singleton $\{E_2\}$, where $E_2$ is the infected equilibrium. This shows that
$\lim_{t\rightarrow\infty} (T(t),I(t),V(t))=(T_2,I_2,V_2)$.
By the classical Lyapunov-LaSalle invariance principle (Theorem 5.3 in \cite{kuang}), if $a\leq d +\dfrac{a}{ T_{\max}}[T_2+I_2]$ then $E_2$ is globally asymptotically stable. This proves Theorem \ref{GASinfected}.\end{proof}

\noindent As is well known, the Lyapunov functions are never unique. We constructed a Volterra--type Lyapunov functional for the  infected equilibrium to prove Theorem \ref{GASinfected}

\begin{align*}
L(t)&=\widehat{L}(t)+\frac{bT_2V_2}{1+\alpha V_2}L_+(t), \end{align*}
where
\begin{align*}
\widehat{L}&=\int_{T_2}^T\frac{\left( \sigma -T_2\right)}{\sigma} d\sigma +\int_{I_2}^I\frac{\left( \sigma -I_2\right)}{\sigma}d\sigma +\frac{bT_2V_2}{p I_2(1+\alpha V_2)}\int_{V_2}^V\left(1 -\frac{V_2}{\sigma}\right)d\sigma.
\end{align*}
The time derivative of $L(t)$ computed along solutions of \eqref{hiv3}, is given by the expression
\begin{align*}
\frac{dL}{dt}&=-\left(d-a+\frac{a}{T_{\max}
}[T_2+I_2]\right)\frac{(T-T_2)^2}{T}-\frac{a}{T_{\max}}[(T-T_2)+(I-I_2)]^{2}\\
&- \frac{bT_2V_2}{1+\alpha V_2} \left(\frac{T_2}{T}-1-\ln\frac{T_2}{T}\right)- \frac{bT_2V_2}{1+\alpha V_2}\left(\frac{IV_2}{I_2V}-1-\ln\frac{IV_2}{I_2V} \right)\\
&- \frac{bT_2V_2}{1+\alpha V_2} \left(\frac{T(t-\tau) I_2V(t-\tau)(1+\alpha V_2)}{T_2IV_2(1+\alpha V(t-\tau))}-1-\ln\frac{T(t-\tau)I_2V(t-\tau)(1+\alpha V_2)}{T_2IV_2(1+\alpha V(t-\tau))}\right)\\
&- \frac{bT_2V_2}{1+\alpha V_2} \left(\frac{1+\alpha V}{1+\alpha V_2}-1-\ln\frac{1+\alpha V}{1+\alpha V_2}\right)-\alpha bT_2 \frac{(V-V_2)^2}{(1+\alpha V)(1+\alpha V_2)^2}.\end{align*}

\section{Hopf Bifurcation Analysis}
For the bifurcation analysis we use the delay $\tau$ as a bifurcation parameter to find an interval in which the infected equilibria is stable and unstable out of the same margins.
Now to establish the Hopf bifurcation at $\tau =\tau _0$ we need to show that $\dfrac{d{\rm Re}\lambda (\tau _0)}{d\tau}>0$ differentiating \eqref{hiv8} with respect to $\tau$ we get
\[ \frac{d\lambda}{d\tau}=\frac{\lambda (b_1\lambda +b_0){\rm e}^{-\lambda \tau}}{3\lambda ^2+2a_2\lambda+a_1+b_1 {\rm e}^{-\lambda \tau}-(b_1\lambda +b_0){\rm e}^{-\lambda \tau}}\]
this gives
\[ \begin{aligned}
\left( \frac{d\lambda}{d\tau} \right) ^{-1}=&\frac{3\lambda ^2+2a_2\lambda+a_1+b_1{\rm e}^{-\lambda \tau}-(b_1\lambda +b_0){\rm e}^{-\lambda \tau}}{\lambda (b_1\lambda +b_0){\rm e}^{-\lambda \tau}}\\
=&\frac{3\lambda ^2+2a_2\lambda+a_1+b_1 {\rm e}^{-\lambda \tau}}{\lambda (b_1\lambda +b_0){\rm e}^{-\lambda \tau}}-\frac{\tau}{\lambda}\\
=&\frac{3\lambda ^3+2a_2\lambda ^2+a_1 \lambda+b_1\lambda {\rm e}^{-\lambda \tau}}{\lambda ^2 (b_1\lambda +b_0){\rm e}^{-\lambda \tau}}-\frac{\tau}{\lambda}\\
\end{aligned}\]
\[ \begin{aligned}
\left( \frac{d\lambda}{d\tau}\right)^{-1}=& \frac{2\lambda ^3+a_2\lambda ^2-a_0-b_0{\rm e}^{-\lambda \tau}}{\lambda ^2(b_1\lambda +b_0){\rm e}^{-\lambda \tau}}-\frac{\tau}{\lambda}\\
=& -\frac{2\lambda ^3+a_2\lambda ^2-a_0}{\lambda ^2(\lambda ^3+a_2\lambda ^2+a_1\lambda +a_0)}-\frac{b_0}{\lambda ^2(b_1\lambda+b_0)}-\frac{\tau}{\lambda}
\end{aligned}\]
It is important to be aware that we used \eqref{hiv8} in several equalities. Thus,
\[ \begin{aligned}
 {\rm sign} \left\{ \frac{d{\rm Re}\lambda}{d\tau}\right\}_{\lambda =i\omega _0}=&{\rm sign} \left\{  {\rm Re}\left( \frac{ d\lambda}{d\tau}\right)\right \}_{\lambda =i\omega _0} \\
=& {\rm sign} \bigg\{ {\rm Re} \left[ -\frac{2\lambda ^3+a_2\lambda ^2-a_0}{\lambda ^2(\lambda ^3+a_2\lambda ^2+a_1\lambda +a_0)}-\frac{b_0}{\lambda ^2(b_1\lambda+b_0)}-\frac{\tau}{\lambda} \right]_{\lambda =i\omega _0}\bigg\}\\
=& {\rm sign} \bigg\{ {\rm Re} \left[ -\frac{-2\omega _0 ^3i-a_2\omega ^2 i-a_0}{-\omega _0^2(-\omega _0^3i-a_2\omega _0^2 +a_1\omega _0i +a_0)}-\frac{b_0}{-\omega _0 ^2 (b_1\omega _0i +b_0)}-\frac{\tau _0}{\omega _0i} \right]\bigg\}\\
=& {\rm sign}\bigg\{ \frac{2\omega _0^6+(a_2^2-2a_1)\omega _0^4-a_0^2}{\omega _0^2[(a_2\omega _0^2-a_0)^2+(\omega _0^3-a_1\omega _0)^2]} +\frac{b_0^2}{\omega _0^2[b_0^2+(b_1\omega _0)^2]}\bigg\}\\
=& {\rm sign} \bigg\{ \frac{2\omega _0^6+(a_2^2-2a_1)\omega _0^4+b_0^2-a_0^2}{\omega _0^2[(a_2\omega _0^2-a_0)^2+(\omega _0^3-a_1\omega _0)^2]} \bigg\}\\
=& {\rm sign} \bigg\{ \frac{2\omega _0^6+(a_2^2-2a_1)\omega _0^4+(\omega _6+(a_2^2-2a_1)\omega _0^4+(a_1^2-2a_0a_2-b_1^2)\omega _0^2)}{\omega _0^2[(a_2\omega _0^2-a_0)^2+(\omega _0^3-a_1\omega _0)^2]} \bigg\}\\
=& {\rm sign} \bigg\{ \frac{3\omega _0^4+2(a_2^2-2a_1)\omega _0^2+a_1^2-2a_0a_2-b_1^2}{(a_2\omega _0^2-a_0)^2+(\omega _0^3-a_1\omega _0)^2} \bigg\}.
\end{aligned}\]
It is also important to notice that we used \eqref{hiv8}-\eqref{hiv11} in several equalities. Now to conclude that $\dfrac{d{\rm Re}\lambda }{d\tau}>0$ consider the next lemma
\begin{lemma}[\cite{wei}] Supposed that $x_1$, $x_2$, $x_3$ are the roots of equation $g(x)=x ^3+\alpha x^2+\beta x+\gamma =0$ ($\beta <0$), and $x_3$ is the largest positive simple root, then
\[ \frac{dg(x)}{dx}\bigg| _{x=x_3}>0.\]
\end{lemma}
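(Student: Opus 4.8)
The plan is to work directly from the factored form of the cubic, since factoring makes the sign of $g'(x_3)$ completely transparent and produces an exact formula rather than relying on a sign table. Writing $g(x)=(x-x_1)(x-x_2)(x-x_3)$, I would differentiate by the product rule and evaluate at $x=x_3$. The two terms that still carry a factor $(x-x_3)$ vanish, leaving the clean identity
\[
\frac{dg(x)}{dx}\bigg|_{x=x_3}=(x_3-x_1)(x_3-x_2).
\]
Thus the entire lemma reduces to showing that this single product is strictly positive.

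Next I would split into the two ways the remaining roots $x_1,x_2$ can occur. If $x_1,x_2$ are both real, then because $x_3$ is the \emph{largest} root we have $x_3\ge x_1$ and $x_3\ge x_2$, and because $x_3$ is a \emph{simple} root it coincides with neither, so $x_3-x_1>0$ and $x_3-x_2>0$, and their product is positive. If instead $x_1,x_2$ form a complex-conjugate pair, say $x_1=p+qi$ and $x_2=p-qi$ with $q\neq 0$, then since $x_3$ is real
\[
(x_3-x_1)(x_3-x_2)=(x_3-p)^2+q^2>0.
\]
In either case $g'(x_3)>0$, which is the claim.

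As a sanity check on the computation, it is worth recording the conceptual reason the result must hold: $g$ is monic of odd degree, so $g(x)\to+\infty$ as $x\to+\infty$, and since $x_3$ is the largest real root, $g(x)>0$ for every $x>x_3$; a simple root forces a genuine sign change, so $g<0$ just to the left of $x_3$, i.e.\ $g$ is increasing through $x_3$, which is exactly $g'(x_3)>0$. The hypothesis $\beta<0$ does not enter the sign computation itself; in the application it is what guarantees that the cubic \eqref{hiv12} actually possesses a positive root to play the role of $x_3$ (with $\beta<0$ one has $g'(0)=\beta<0$ together with $g(z)\to+\infty$, forcing a positive crossing), so that the transversality numerator $3\omega_0^4+2(a_2^2-2a_1)\omega_0^2+a_1^2-2a_0a_2-b_1^2$ — which is precisely $g'(z_0)$ for the cubic in \eqref{hiv12} evaluated at $z_0=\omega_0^2$ — is positive and the Hopf condition $\tfrac{d\,\mathrm{Re}\,\lambda}{d\tau}>0$ follows.

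The only point requiring care, and the single place a careless argument could go astray, is the complex-conjugate case: one must not tacitly assume all three roots are real. Handling it is painless once one observes that a conjugate pair contributes $(x_3-p)^2+q^2$, but it is the step I would be careful to spell out explicitly.
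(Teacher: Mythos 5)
Your argument is correct, but there is nothing in the paper to compare it against: the paper states this lemma as a citation to the reference of Wei, Li and Martcheva and never proves it, using it only as a black box to get the transversality condition $\frac{d\,\mathrm{Re}\,\lambda}{d\tau}>0$. The route you take --- writing the monic cubic as $(x-x_1)(x-x_2)(x-x_3)$, computing $g'(x_3)=(x_3-x_1)(x_3-x_2)$, and splitting into the case of two real roots versus a complex-conjugate pair $p\pm qi$ with $(x_3-p)^2+q^2>0$ --- is the standard proof of this fact, and you are right that the conjugate-pair case is the one a careless argument would miss. So as a proof of the lemma itself, your proposal is sound and self-contained.

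Two corrections, however. First, your real-roots case silently reads ``largest positive simple root'' as ``largest root.'' These need not coincide a priori: if the other two roots formed a real double root $u>x_3>0$, your inequalities $x_3\ge x_1$, $x_3\ge x_2$ would fail (the conclusion would survive, since then $g'(x_3)=(x_3-u)^2>0$). This configuration is precisely what $\beta<0$ excludes, because three positive roots force $\beta=x_1x_2+x_1x_3+x_2x_3>0$; so either add that one-line observation or the square identity, and temper your claim that $\beta<0$ ``does not enter the sign computation.'' Second, and more substantively, your closing parenthetical --- that $g'(0)=\beta<0$ together with $g(z)\to+\infty$ ``forces a positive crossing'' --- is false: $g(z)=z^3+10z^2-z+1$ has $\beta=-1<0$ yet $g(z)>0$ for all $z\ge 0$, so no positive root exists. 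Existence of a positive root follows from $\gamma<0$ (the case $C<0$ in the paper), but under the paper's alternative condition $C\ge 0$, $B<0$ it is not automatic and is in effect hypothesized, both in the lemma and in the Hopf bifurcation theorem that follows it. This error sits outside the lemma's proof proper, since the lemma assumes $x_3$ exists, so it does not invalidate your argument, but the remark should be deleted or corrected.
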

In our case, considering $F(z)=z^3+Az^2+Bz+C=0$, defined in \eqref{hiv12}, and assuming $B<0$ and $\omega _0^2$ as the largest positive root we have
\[
\frac{d{\rm Re}\lambda}{d\tau}=\frac{\frac{dF(z)}{dz}}{(a_2\omega _0^2-a_0)^2+(\omega _0^3-a_1\omega _0)^2}>0.
\]
The above analysis can be summarized into the following theorem:
\begin{theorem}
Suppose that
\begin{itemize}
\item[(i)] $R_0>1$.
\end{itemize}
If either
\begin{itemize}
\item[(ii)] $C<0$
\end{itemize}
or
\begin{itemize}
\item[(iii)] $C\geq 0$ and $B <0$
\end{itemize}
is satisfied, and $\omega _0$ is the largest positive simple root of \eqref{hiv12} then the infected equilibrium $E_2$  of model \eqref{hiv3} is locally asymptotically stable when $\tau <\tau _0$ and unstable when $\tau >\tau _0$ where
\[
\tau _0=\frac{1}{\omega _0}\arccos \left[ \frac{b_0(a_2\omega_0 ^2-a_0)+b_1\omega_0 (\omega_0 ^3-a_1\omega_0 )}{b_0^2+b_1^2\omega_0^ 2}\right]
\]
when $\tau =\tau _0$, a Hopf bifurcation occurs; that is a family of periodic solutions bifurcates from $E_2$ as $\tau$ passes through the critical value $\tau _0$.
\end{theorem}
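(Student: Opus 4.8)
The plan is to use the delay $\tau$ as bifurcation parameter and to track how the roots of the characteristic equation \eqref{hiv8} cross the imaginary axis. Since $R_0>1$, the infected equilibrium $E_2$ exists and is unique, and under the Routh--Hurwitz conditions $(H1)$ it is locally asymptotically stable at $\tau=0$. The argument splits into three pieces: locate a purely imaginary root $\lambda=i\omega_0$ together with its critical delay $\tau_0$; verify the transversality condition $d{\rm Re}\,\lambda/d\tau|_{\tau=\tau_0}>0$; and then invoke the Hopf bifurcation theorem for delay equations, combined with the stability information on $[0,\tau_0)$, to read off both the stability switch and the bifurcating periodic orbits.

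First I would observe that a purely imaginary root $\lambda=i\omega$ with $\omega>0$ of \eqref{hiv8} forces $\omega$ to satisfy \eqref{hiv11}, equivalently $z=\omega^2$ to satisfy the monic cubic \eqref{hiv12}, $F(z)=z^3+Az^2+Bz+C=0$. By hypothesis this cubic has a largest positive simple root $z_0$; put $\omega_0=\sqrt{z_0}$. In case (ii), $C=a_0^2-b_0^2<0$ gives $F(0)<0$, so a positive root exists automatically; in case (iii) the sign condition $B<0$ plays its role in the transversality step below. Substituting $\omega_0$ into the linear system \eqref{hiv10} and solving for $\cos(\omega_0\tau)$ produces the critical value $\tau_0$ displayed in the statement, and at $(\tau_0,\omega_0)$ the number $i\omega_0$ is a root of \eqref{hiv8}.

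Next I would establish transversality. The computation performed just above the theorem already reduces the sign of the crossing velocity to
\[ {\rm sign}\left\{\frac{d{\rm Re}\,\lambda}{d\tau}\right\}_{\lambda=i\omega_0}={\rm sign}\left\{\frac{F'(z_0)}{(a_2\omega_0^2-a_0)^2+(\omega_0^3-a_1\omega_0)^2}\right\}. \]
Because the denominator is strictly positive, it suffices to show $F'(z_0)>0$. This is precisely the content of the lemma quoted from Wei \cite{wei}: as $z_0$ is the largest positive simple root of the monic cubic $F$, the graph of $F$ passes from negative to positive values at $z_0$, whence $F'(z_0)>0$ (the hypothesis $B<0$ supplies the sign condition required by that lemma, while for $C<0$ the same largest-simple-root argument applies directly). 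Therefore $d{\rm Re}\,\lambda/d\tau|_{\tau=\tau_0}>0$ and the crossing is transverse.

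To finish, I would check that $\pm i\omega_0$ is the unique pair of eigenvalues on the imaginary axis at $\tau=\tau_0$ and that it is simple, so that the crossing eigenvalue is a smooth function of $\tau$ near $\tau_0$; the transversality just proved then forces this pair to move from the open left half-plane into the open right half-plane as $\tau$ increases through $\tau_0$. By part (b) of the preceding stability theorem (based on Ruan \cite{ruan}) all roots have negative real parts for $\tau\in[0,\tau_0)$, so $E_2$ is locally asymptotically stable there, and the crossing renders it unstable for $\tau>\tau_0$. The classical Hopf bifurcation theorem for delay differential equations, namely a simple imaginary pair crossing transversally with all other spectrum off the axis, then yields a branch of periodic solutions bifurcating from $E_2$ at $\tau=\tau_0$. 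The step I expect to be most delicate is the spectral bookkeeping: confirming that $\tau_0$ really is the first crossing, so that stability genuinely holds on all of $[0,\tau_0)$, and that no further roots lie on the imaginary axis, since the reduction of the sign of $d{\rm Re}\,\lambda/d\tau$ to that of $F'(z_0)$ relied on repeatedly substituting the characteristic relation \eqref{hiv8} and on $z_0$ being the largest simple root.
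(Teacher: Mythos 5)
Your proposal is correct and follows essentially the same route as the paper: reduce purely imaginary roots of \eqref{hiv8} to the cubic \eqref{hiv12}, use the sign computation of $d\,\mathrm{Re}\,\lambda/d\tau$ at $\lambda=i\omega_0$ together with the lemma of Wei \cite{wei} to get transversality $F'(z_0)>0$, and combine this with the Ruan-based stability result on $[0,\tau_0)$ and the Hopf theorem for delay equations. Your added remark that in case (ii) ($C<0$) the largest-simple-root argument yields $F'(z_0)>0$ directly, without needing the hypothesis $B<0$ of Wei's lemma, is a small but genuine tightening of the paper's presentation, which invokes that lemma under the blanket assumption $B<0$.
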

\section{Permanence}
\begin{lemma}
For any solution $(T(t),I(t),V(t))$ of system \eqref{hiv3}, we have
\[
\limsup _{t\rightarrow \infty} T(t)\leq T_0=\frac{T_{\max}}{2a}\left[ a-d+\sqrt{(a-d)^2+\frac{4as}{T_{\max}}} \right].
\]
Then there is a $t_1>0$ such that for any sufficiently small $\epsilon >0$, we have $T(t)\leq T_0+\epsilon$ for $t>t_1$.
\end{lemma}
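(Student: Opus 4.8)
The plan is to reduce the first equation of \eqref{hiv3} to a scalar logistic-type differential inequality, and then apply a standard comparison argument after recognizing $T_0$ as the unique positive equilibrium of the comparison equation. First I would invoke the positivity established in Lemma 1, so that $I(t)\geq 0$, $V(t)\geq 0$ and $T(t)>0$ for all $t>0$. Consequently the two subtracted terms $\frac{a}{T_{\max}}T(t)I(t)$ and $\frac{bT(t)V(t)}{1+\alpha V(t)}$ appearing in $\dot T$ are nonnegative, and discarding them yields the differential inequality
\[
\dot T(t)\leq s+(a-d)T(t)-\frac{a}{T_{\max}}T(t)^2 =: g(T(t)).
\]

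Next I would analyze the scalar autonomous comparison equation $\dot u=g(u)$. Since $g$ is a downward-opening parabola in $u$ with $g(0)=s>0$, it has exactly one positive and one negative root; setting $g(u)=0$ and solving the quadratic
\[
\frac{a}{T_{\max}}u^2-(a-d)u-s=0
\]
gives the positive root $u=T_0$ with $T_0$ exactly as in the statement. Because $g(u)>0$ on $[0,T_0)$ and $g(u)<0$ on $(T_0,\infty)$, the equilibrium $T_0$ is globally asymptotically stable for all nonnegative initial data, i.e. every solution of $\dot u=g(u)$ with $u(0)\geq 0$ satisfies $u(t)\to T_0$ as $t\to\infty$.

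I would then let $u(t)$ solve $\dot u=g(u)$ with $u(0)=T(0)\geq 0$. By the comparison theorem for scalar differential equations applied to $\dot T\leq g(T)$, one has $T(t)\leq u(t)$ for all $t\geq 0$, and passing to the limit gives $\limsup_{t\to\infty}T(t)\leq\lim_{t\to\infty}u(t)=T_0$. The final assertion follows immediately from the definition of $\limsup$: for each sufficiently small $\epsilon>0$ there is a $t_1>0$ with $T(t)\leq T_0+\epsilon$ for all $t>t_1$.

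I expect no serious obstacle, as the argument is routine; the only points requiring care are the justification that the discarded terms are genuinely nonnegative (which rests on Lemma 1) and the identification of $T_0$ with the positive equilibrium of the comparison equation. An equivalent route avoiding the comparison theorem would use the fluctuation lemma: evaluating $g$ along a sequence realizing $\limsup_{t\to\infty}T(t)$ on which $\dot T\to 0$ forces $g(\limsup_{t\to\infty}T)\geq 0$, and hence $\limsup_{t\to\infty}T\leq T_0$ by the sign structure of $g$; I would mention this as the alternative.
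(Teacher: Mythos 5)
Your proof is correct and takes essentially the same approach as the paper: the paper's entire argument is the one-line observation that $\dot T(t)\leq s-(d-a)T(t)-\frac{a}{T_{\max}}T^{2}(t)$, leaving the comparison with the logistic-type equation implicit. You simply make explicit what the paper leaves unsaid --- discarding the nonnegative infection and mitosis terms via positivity, identifying $T_0$ as the unique positive root of the comparison quadratic, and applying the scalar comparison theorem --- so there is no substantive difference.
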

The previous lemma follows, noting that for the first equation of \eqref{hiv3}, we have
\[ \dot T(t)\leq s-(d-a)T(t)-\frac{a}{T_{\max}}T^2(t)\]
\begin{theorem} \label{theorem}
There exist $M_I,\, M_V>0$ such that for any positive solution $(T(t), I(t), V(t))$ of system \eqref{hiv3},
\[I(t)<M_I, \; V(t)<M_V\]
for all large $t$.
\end{theorem}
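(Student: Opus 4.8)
The plan is to bootstrap from the immediately preceding Lemma, which already gives $\limsup_{t\to\infty}T(t)\le T_0$, and to exploit the saturation built into the incidence rate in order to control the delayed infection term. First I would fix a small $\epsilon>0$ and use that Lemma to choose $t_1>0$ with $T(t)\le T_0+\epsilon$ for all $t>t_1$. The decisive observation is that the saturating factor satisfies $\dfrac{V(t-\tau)}{1+\alpha V(t-\tau)}\le\dfrac{1}{\alpha}$, so the delayed incidence in the $\dot I$ equation is uniformly bounded: for $t>t_1+\tau$,
\[
\frac{bT(t-\tau)V(t-\tau)}{1+\alpha V(t-\tau)}\le\frac{b(T_0+\epsilon)}{\alpha}=:K.
\]

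Next I would reduce the $\dot I$ equation to a scalar logistic differential inequality. Using positivity of $T$ (from the positivity Lemma) we have $aI\bigl(1-\frac{T+I}{T_{\max}}\bigr)\le aI-\frac{a}{T_{\max}}I^2$, so for $t>t_1+\tau$,
\[
\dot I(t)\le K+(a-\mu)I(t)-\frac{a}{T_{\max}}I^2(t).
\]
The right-hand side is a downward parabola in $I$ tending to $-\infty$, hence the comparison equation $\dot u=K+(a-\mu)u-\frac{a}{T_{\max}}u^2$ has a unique positive equilibrium
\[
M_I=\frac{T_{\max}}{2a}\left[(a-\mu)+\sqrt{(a-\mu)^2+\frac{4aK}{T_{\max}}}\right],
\]
which is globally attracting from nonnegative initial data (the RHS is positive at $u=0$ and negative for $u>M_I$). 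By the standard comparison principle for scalar ODEs I conclude $\limsup_{t\to\infty}I(t)\le M_I$, and possibly enlarging $M_I$ by an arbitrarily small amount yields the strict bound $I(t)<M_I$ for all large $t$.

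Finally, the bound on $V$ follows immediately from the third equation. Given $\limsup I\le M_I$, there is $t_2$ with $I(t)\le M_I+\epsilon$ for $t>t_2$; then $\dot V\le p(M_I+\epsilon)-cV$, so comparison with the linear equation gives $\limsup_{t\to\infty}V(t)\le\frac{p(M_I+\epsilon)}{c}$, and I set $M_V=\frac{pM_I}{c}$ (again with a harmless $\epsilon$-adjustment for strictness). I expect the only genuine obstacle to be the delayed incidence term: because it is evaluated at $t-\tau$, one cannot cancel it against the loss term $-\frac{bTV}{1+\alpha V}$ by passing to a combined variable $T+I$ as in the nondelayed case. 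The resolution is precisely the saturation of the infection rate, which caps the incoming flux at $b/\alpha$ per target cell and thereby decouples the estimate for $I$ from the unknown behaviour of $V(t-\tau)$; once that flux is bounded, the argument reduces entirely to scalar logistic and linear comparison.
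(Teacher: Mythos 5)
Your proof is correct, but it follows a genuinely different route from the paper's. You cap the delayed incidence term by combining the saturation bound $\frac{V(t-\tau)}{1+\alpha V(t-\tau)}\le\frac{1}{\alpha}$ with the ultimate bound $T\le T_0+\epsilon$ from the preceding lemma, and then close the estimate with a scalar logistic comparison for $I$ alone, followed by linear comparison for $V$. The paper instead works with the time-shifted combined variable $W(t)=T(t-\tau)+I(t)$: differentiating gives $\dot W(t)=\dot T(t-\tau)+\dot I(t)$, in which the delayed incidence term cancels exactly (it enters $\dot T(t-\tau)$ with a minus sign and $\dot I(t)$ with a plus sign); completing squares in the logistic terms and invoking the standing assumption $d\le\mu$ then yields the linear inequality $\dot W\le -dW+\frac{aT_{\max}+2s}{2}$, whence $W$, and so $I$, is ultimately bounded. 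Note that this shifted combination refutes the obstacle you flagged at the end: you are right that the synchronous variable $T(t)+I(t)$ cannot cancel the delayed term, but $T(t-\tau)+I(t)$ can, and that is precisely the paper's trick. As for what each approach buys: your argument needs no relation between $d$ and $\mu$, is more elementary (no auxiliary variable, no completion of squares), and makes transparent the biological role of saturation; its cost is that it degenerates as $\alpha\to 0$, since your $M_I\sim\sqrt{T_{\max}b(T_0+\epsilon)/(a\alpha)}$ blows up, so it cannot handle the mass-action limit. The paper's bound is independent of $\alpha$ (it would survive setting $\alpha=0$) but relies essentially on $d\le\mu$, without which the step from $-dT(t-\tau)-\mu I$ to $-dW$ fails. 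Both proofs produce bounds uniform in the initial data, as the theorem requires.
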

\begin{proof}
Let $W(t)=T(t-\tau )+I(t)$, then
\[ \begin{aligned}
\dot W(t)=&\dot T(t-\tau)+\dot I(t)\\
=&s-dT(t-\tau )+aT(t-\tau )\left( 1-\frac{T( t-\tau )+I(t-\tau )}{T_{\max}}\right) -\frac{bT(  t-\tau )V(t-\tau)}{1+\alpha V(t-\tau)}+\\
& \frac{bT(t-\tau )V(t-\tau )}{1+\alpha V(t-\tau )}-\mu I+aI \left(  1-\frac{T+I}{T_{\max}}\right) \\
=& s-dT(t-\tau )+aT(t-\tau )\left(  1-\frac{T(  t-\tau )+I(t-\tau )}{T_{\max}}\right) -\mu I+aI\left( 1-\frac{T+I}{T_{\max}}\right) \\
=& s-dT (t-\tau )  +aT\left(
t-\tau\right)  -\frac{aT^{2}\left(  t-\tau\right)  }{T_{\max}}-\frac{aT\left(
t-\tau\right)  I\left(  t-\tau\right)  }{T_{\max}}-\mu I+aI-\frac{aTI}%
{T_{\max}}\\
&-\frac{aI^{2}}{T_{\max}}.
\end{aligned}\]
Using that, $-\frac{a}{T_{\max}}\left(  T\left(  t-\tau\right)-\frac{T_{\max}}{2}\right) ^{2} +\frac{aT_{\max}}{4}=-\frac{aT^{2}\left(  t-\tau\right)
}{T_{\max}}+aT\left(  t-\tau\right)  $ and $-\frac{a}{T_{\max}}\left(I-\frac{T_{\max}}{2}\right)  ^{2}+\frac{aT_{\max}}{4}=-\frac{aI^{2}}{T_{\max}
}+aI$, we get
\begin{align*}
\dot{W}\left(  t\right)   &  =s-dT\left(  t-\tau\right)  -\frac{a}{T_{\max}%
}\left(  T\left(  t-\tau\right)  -\frac{T_{\max}}{2}\right)  ^{2}-\frac
{a}{T_{\max}}\left(  I-\frac{T_{\max}}{2}\right)  ^{2}+\frac{aT_{\max}}{2}+\\
&  -\frac{aT\left(  t-\tau\right)  I\left(  t-\tau\right)  }{T_{\max}}-\mu
I-\frac{aTI}{T_{\max}},
\end{align*}
then
\begin{align*}
\dot{W}\left(  t\right)   &  =-dT\left(  t-\tau\right)  -\mu I-\frac
{a}{T_{\max}}\left(  T\left(  t-\tau\right)  -\frac{T_{\max}}{2}\right)
^{2}-\frac{a}{T_{\max}}\left(  I-\frac{T_{\max}}{2}\right)  ^{2}%
+\frac{aT_{\max}+2s}{2}+\\
&  -\frac{aT\left(  t-\tau\right)  I\left(  t-\tau\right)  }{T_{\max}}
-\frac{aTI}{T_{\max}}\\
&\leq -dT(t-\tau)-\mu I(t)+\frac{aT_{\max}+2s}{2},
\end{align*}
therefore
\[
\dot{W}\left(t\right)\leq -d W\left(t\right) +\frac {aT_{\max}+2s}{2}
\]
where $d \leq \mu$. Hence, we get the boundness of $W (t)$
\[ \limsup _{t\rightarrow \infty}W(t)=\frac{aT_{\max}/2+s}{h},\]
that is, there exist $t_2>0$ and $M_1>0$ such that $W(t)<M_1$ for $t>t_2$. Then $I(t)$ has an ultimately upper bound $M_I$.\\
It follows from the third equation of system \eqref{hiv3} that $V(t)$ has an ultimately upper bound, say $M_V$. Then the assertion of theorem follows and the proof is complete. \end{proof}
Define
\[ \Omega =\{ (T,I,V):0\leq T\leq T_0, \; 0\leq I\leq M_I,\; 0\leq V\leq M_V\} \]
System \eqref{hiv3} satisfies, for some $t_1>0$,
\[
\dot T\geq s-dT+aT\left( 1-\frac{T+M_I}{T_{\max}}\right)-\frac{b T}{\alpha}
\]
which implies that
\[
\liminf _{t\rightarrow \infty} T(t)\geq \frac{T_{\max}}{2a}\left[ a-d-\frac{b}{\alpha} -\frac{aM_I}{T_{\max}}+\sqrt{\left( a-d-\frac{b}{\alpha}-\frac{aM_I}{T_{\max}}\right)^2+\frac{4as}{T_{\max}}}\right]
\]
Now we shall prove that the instability of $E_1$ implies that system \eqref{hiv3} is permanent.
\begin{definition}
System \eqref{hiv3} is said to be uniformly persistent, if there is an $\eta >0$ (independent of the initial data) such that every solution $(T(t),I(t),V(t))$ with initial condition of system \eqref{hiv3} satisfies $\liminf _{t\rightarrow \infty} T(t)\geq \eta$, $\liminf _{t\rightarrow \infty} I(t)\geq \eta$, $\liminf _{t\rightarrow \infty} V(t)\geq \eta$.
\end{definition}
For dissipative system uniform persistence is equivalent to the permanence.
\begin{theorem} \label{permanence}
System \eqref{hiv3} is permanent provided $R_0>1$.
\end{theorem}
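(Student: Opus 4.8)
The plan is to derive permanence from the instability of the infection-free equilibrium $E_1$ through the theory of uniform persistence for the delay semiflow generated by \eqref{hiv3}. The preceding lemmas already furnish dissipativity: every positive solution eventually enters the compact set $\Omega$, so the semiflow is point dissipative, and since integration over $[-\tau,0]$ renders it eventually compact, a compact global attractor exists. They also furnish a uniform positive lower bound for $T(t)$. Because uniform persistence and permanence coincide for a dissipative system, it remains only to bound $I(t)$ and $V(t)$ away from zero. First I would introduce the semiflow $\Phi_t$ on $\mathcal{C}_+$ and partition the state space into the interior set $X_0=\{\phi\in\mathcal{C}_+:\phi_2(0)>0,\ \phi_3(0)>0\}$ and the boundary $\partial X_0=\mathcal{C}_+\setminus X_0$; the positivity Lemma shows $X_0$ is forward invariant, hence $\partial X_0$ is invariant.

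Next I would identify the invariant dynamics on the boundary. If $V\equiv0$ along a complete orbit then $\dot V=pI$ forces $I\equiv0$, and if $I\equiv0$ then $\dot I=bT(t-\tau)V(t-\tau)/(1+\alpha V(t-\tau))$ forces $V\equiv0$; thus on the maximal invariant set $M_\partial\subset\partial X_0$ both $I$ and $V$ vanish identically and $T$ obeys the scalar equation $\dot T=s-dT+aT(1-T/T_{\max})$, whose only bounded complete orbit is the globally attracting equilibrium $T_0$. Hence $M_\partial=\{E_1\}$, a single point, which is automatically isolated and acyclic. By the acyclicity form of the uniform persistence theorem, $X_0$ is then uniformly persistent once $E_1$ is shown to be a weak repeller for $X_0$, i.e.\ once one rules out any interior orbit converging to $E_1$.

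The weak repeller property is the crux of the argument, and I would prove it by contradiction using the positive characteristic root. Assume some solution with $\phi\in X_0$ satisfies $(T(t),I(t),V(t))\to(T_0,0,0)$. Linearizing the $(I,V)$ block of \eqref{hiv3} at $E_1$ gives the cooperative delay system $\dot I=(d-s/T_0-\mu)I+bT_0V(t-\tau)$, $\dot V=pI-cV$, whose characteristic equation is precisely \eqref{hiv6}; since $R_0>1$ we have $a_0+b_0=-\frac{\mu}{c}(R_0-1)<0$, so the left-hand side of \eqref{hiv6} is negative at $\lambda=0$ and tends to $+\infty$ as $\lambda\to+\infty$, giving a real root $\lambda^*>0$. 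Because the off-diagonal couplings $bT_0$ and $p$ are positive, this subsystem is monotone and its dominant mode grows like $e^{\lambda^*t}$. Fixing a small $\sigma>0$ and using the assumed convergence to guarantee $T(t-\tau)\geq T_0-\sigma$, $T(t)+I(t)\leq T_0+\sigma$ and $V(t-\tau)\leq\sigma$ for all large $t$, the $I$- and $V$-equations yield the differential inequalities
\[
\dot I\geq\Bigl(a\bigl(1-\tfrac{T_0+\sigma}{T_{\max}}\bigr)-\mu\Bigr)I+\frac{b(T_0-\sigma)}{1+\alpha\sigma}\,V(t-\tau),\qquad \dot V=pI-cV.
\]
A comparison principle for cooperative delay systems then bounds $(I(t),V(t))$ from below by the solution of the associated linear system, whose dominant characteristic root $\lambda^*(\sigma)$ depends continuously on $\sigma$ and tends to $\lambda^*>0$ as $\sigma\to0$; choosing $\sigma$ small keeps $\lambda^*(\sigma)>0$, so $I(t)$ and $V(t)$ grow without bound, contradicting $I,V\to0$. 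This establishes the weak repeller property, and the acyclicity theorem delivers $\eta>0$ with $\liminf_{t\to\infty}I(t)\geq\eta$ and $\liminf_{t\to\infty}V(t)\geq\eta$. Together with the lower bound for $T(t)$ already established and confinement to $\Omega$, this shows that \eqref{hiv3} is permanent. The main obstacle is exactly this comparison step, where the logistic and saturation nonlinearities must be controlled uniformly in $\sigma$ so that the perturbed linear system retains a positive dominant root.
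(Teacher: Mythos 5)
Your proposal is correct and is essentially the paper's own argument: both derive permanence from uniform persistence via the acyclicity (Hale--Waltman type) theorem, identify $E_1$ as the sole isolated, acyclically-covered invariant set on the boundary (where $I\equiv V\equiv 0$ and $T$ follows the scalar logistic-type equation), and prove the weak-repeller property by contradiction, comparing $(I,V)$ from below with a linear cooperative delay system whose dominant eigenvalue is positive when $R_0>1$ --- the paper obtains this eigenvalue from Perron--Frobenius applied to the matrix $A_\epsilon$ together with monotonicity of system \eqref{hiv13}, while you obtain it from continuity in $\sigma$ of the positive real root of \eqref{hiv6}, which are interchangeable implementations of the same idea. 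One small correction: forward invariance of $X_0$ does \emph{not} imply that $\partial X_0=\mathcal{C}_+\setminus X_0$ is invariant (initial data with, say, $\phi_3(0)=0$ but $\phi_2(0)>0$ enter $X_0$ immediately); the paper avoids this by taking as boundary set $C_0$ the states whose $I$ and $V$ components vanish identically, which \emph{is} forward invariant, and your version still goes through only because the persistence theorem in the formulation you invoke depends solely on the maximal invariant subset $M_\partial$ of the boundary, where indeed $I\equiv V\equiv 0$.
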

We present the persistence theory for infinite dimensional system from Hale \cite{hale}. Let $X$ be a complete space metric. Suppose that $X^0 \subset X$, $X_0 \subset X$, $X^0\cap X_0=\emptyset$, $X=X^0\cup X_0$. Assume that $Y(t)$ is $C^0$-semigroup on $X$ satisfying
\begin{equation} \label{hiv9}
\bigg\{ \begin{matrix}
Y(t):X^0\rightarrow X^0,\\
Y(t):X_0\rightarrow X_0.
\end{matrix}
\end{equation}
Let $Y_b(t)=Y(t)|_{X_0}$ and let $A_b$ be the global attractor for $Y_b(t)$.
\begin{lemma} \label{lemma}
Suppose that $Y(t)$ satisfies \eqref{hiv9} and we have the following:
\begin{enumerate}
\item there is a $t_0\geq 0$ such that $Y(t)$ is compact for $t>t_0$,
\item $Y(t)$ is a point dissipative in $X$,
\item $\bar A_b=\cup _{x\in A_b}\omega (x)$ is isolated and has an acyclic covering $\overline{M}$, where $\overline{M}=\{ M_1,M_2,\ldots ,M_n\}$,
\item $W^s(M_i)\cap X^0=\emptyset$, for $i=1,2,\ldots ,n$.
\end{enumerate}
Then $X_0$ is a uniform repellor with respect to $X^0$, i.e., there is an $\epsilon >0$ such that for any $x\in X^0$
\[ \liminf _{t\rightarrow \infty} d(Y(t)x,X_0)\geq \epsilon\], where $d$ is the distance of $Y(t)x$ from $X_0$.
\end{lemma}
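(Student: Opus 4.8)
The plan is to argue by contradiction, extracting from a failure of uniform repulsion an orbit starting in $X^{0}$ whose $\omega$-limit set touches the boundary $X_{0}$, and then using the Butler--McGehee lemma together with the acyclicity of $\overline{M}$ to force that orbit into the stable set of some $M_{i}$, contradicting hypothesis (4). Before the contradiction I would assemble the dynamical infrastructure. Hypothesis (1) makes $Y(t)$ eventually compact, hence asymptotically smooth; combined with the point dissipativity in (2), the standard theory of dissipative semigroups (Hale \cite{hale}) yields a compact global attractor for $Y(t)$ on $X$ and, in particular, guarantees that every orbit is precompact with nonempty, compact, connected, invariant $\omega$-limit set. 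These properties are what the remainder of the argument manipulates.

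Suppose, for contradiction, that $X_{0}$ is not a uniform repellor. Negating the conclusion, for each $n$ there is $x_{n}\in X^{0}$ with $\liminf_{t\to\infty} d(Y(t)x_{n},X_{0})<1/n$. Using the compactness of the global attractor I would pass to a limit of suitable segments of these orbits (a diagonal argument over the times at which the orbits come closest to $X_{0}$) to produce a single point $x\in X^{0}$ whose $\omega$-limit set $\Omega:=\omega(x)$ actually meets the boundary, $\Omega\cap X_{0}\neq\emptyset$. Here one uses that $X^{0}$ is forward invariant while $X_{0}$ is closed and invariant, so the limiting orbit lies in the closure and its recurrent part touches $X_{0}$.

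Because $X_{0}$ is invariant and closed, $\Omega\cap X_{0}$ is a nonempty compact invariant subset of $X_{0}$; since $A_{b}$ is the global attractor of $Y_{b}(t)=Y(t)|_{X_{0}}$, this set is attracted to $A_{b}$, and invariance then forces $\Omega$ to meet $\overline{M}$, say $\Omega\cap M_{i}\neq\emptyset$. Now the dichotomy: if $\Omega\subseteq M_{i}\subset X_{0}$, the orbit of $x$ converges to $M_{i}$, so $x\in W^{s}(M_{i})\cap X^{0}$, contradicting (4). Otherwise $\Omega$ meets $M_{i}$ without being contained in it, and the Butler--McGehee lemma supplies points of $\Omega$ in $W^{u}(M_{i})\setminus M_{i}$ and in $W^{s}(M_{i})\setminus M_{i}$; taking full orbits through these points (which stay in $\Omega$ by invariance) and tracking their $\alpha$- and $\omega$-limits, each of which again lies in $\overline{M}$, produces a chain $M_{i_{1}}\to M_{i_{2}}\to\cdots$ among the finitely many isolated sets. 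Acyclicity of $\overline{M}$ forbids this chain from closing into a cycle, so by connectedness and invariance $\Omega$ must in fact reduce to a single $M_{j}$, whence $x\in W^{s}(M_{j})\cap X^{0}$, again contradicting (4). Both branches being impossible, $X_{0}$ is a uniform repellor, and the required $\epsilon$ is the uniform repulsion distance.

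The main obstacle, I expect, is twofold. First, the extraction of a single boundary-touching orbit from the sequence $\{x_{n}\}$ must be carried out through asymptotic compactness and the attractor rather than a naive pointwise limit, since the returns to $X_{0}$ occur along a priori unrelated time sequences. Second, the chain bookkeeping in the Butler--McGehee step must genuinely exploit acyclicity to guarantee termination and to identify a single $M_{j}$ whose stable set is entered from $X^{0}$; the remaining ingredients (existence of the attractor, and the compactness, invariance, and connectedness of $\omega$-limit sets) are routine consequences of (1) and (2).
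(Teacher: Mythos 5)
The paper does not prove this lemma at all: it is quoted verbatim as the persistence theorem of Hale and Waltman \cite{hale}, so your sketch has to be judged against the source's proof (Theorem 4.1 there), whose overall shape — contradiction, Butler--McGehee, acyclic chaining — you have correctly identified for one half of the argument.

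The genuine gap is the extraction step. From the negation of uniform repulsion you only get points $x_n\in X^0$ with $\liminf_{t\to\infty} d(Y(t)x_n,X_0)<1/n$, and no diagonal argument over orbit segments can upgrade this to a single $x\in X^0$ with $\omega(x)\cap X_0\neq\emptyset$: the limit of the near-boundary points $Y(t_n)x_n$ is a point $y$ of the closed set $X_0$ lying on a full orbit inside the global attractor, not a point of $X^0$, and $W^s(M_i)\cap X^0=\emptyset$ says nothing about orbits through $y\in X_0$. Worse, the intermediate claim is false under hypotheses (1)--(2) alone: there are dissipative systems that are persistent but not uniformly persistent, i.e.\ every interior orbit satisfies $\liminf_t d(Y(t)x,X_0)>0$ (so no interior $\omega$-limit set touches $X_0$) while the liminf has no uniform positive lower bound. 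Such examples necessarily violate (3) or (4), which shows that isolatedness and acyclicity must already enter when handling the sequence $\{x_n\}$, not only after a boundary-touching orbit is in hand. Your Butler--McGehee/acyclicity dichotomy correctly kills the case $\omega(x)\cap X_0\neq\emptyset$, but the hard half of Hale--Waltman's proof is precisely the complementary case: one fixes pairwise disjoint isolating neighborhoods $N_i$ of the $M_i$, uses $W^s(M_i)\cap X^0=\emptyset$ together with isolatedness to show an interior orbit segment entering $N_i$ must eventually exit near $W^u(M_i)$, and chains these transitions among the finitely many $M_i$ until acyclicity forbids the chain from closing — an argument run on sequences of orbit segments, not on the $\omega$-limit set of a single orbit. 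Your closing paragraph names this difficulty but miscasts it as a technical refinement of the limit extraction; it is a structurally different argument, and without it the proof does not close.
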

We now prove theorem \ref{permanence}.
\begin{proof}[Proof of Theorem \ref{permanence}]
We begin by showing that the boundary planes of $\mathbb{R} ^3_+$ repel the positive solutions of system \eqref{hiv3} uniformly. Let us define
\[ C_0=\{ (\psi , \phi _1, \phi_2) \in C([-\tau ,0],\mathbb{R}^3_+): \psi (\theta )\not = 0, \phi _1(\theta)=\phi _2(\theta)=0, \;(\theta \in [-\tau ,0]) \}.\]
If $C^0=int C([-\tau ,0],\mathbb{R}^3_+)$, it suffices to show that there exist an $\epsilon _0$ such that any solution $u_t$ of system \eqref{hiv3} initiating from $C^0$, $\liminf _{t\rightarrow +\infty}d(u_t,C_0)\geq \epsilon _0$. To this end, we verify below that the conditions of lemma \ref{lemma} are satisfied. It is easy to see that $C^0$ and $C_0$ are positively invariant. Moreover, conditions (1) and (2) of lemma \ref{lemma} are satisfied. Thus, we only need to verify the conditions (3) and (4). There is a constant solution $E_1$ in $C_0$, to $T(t)=T_0$, $I(t)=V(t)=0$. If $(T(t),I(t),V(t))$ is a solution of system \eqref{hiv3} initiating from $C_0$, then $T(t)\rightarrow T_0$, $I(t)\rightarrow 0$, $V(t)\rightarrow 0$, as $t\rightarrow +\infty$. It is obvious that $E_1$ is an isolated invariant. Now, we show that $W^s(E_1)\cap C^0=\emptyset $. Assuming the contrary, then there exist a positive solution $(\tilde{T}(t), \tilde{I}(t), \tilde{V}(t))$ of system \eqref{hiv3} such that $((\tilde{T}(t), \tilde{I}(t), \tilde{V}(t))) \rightarrow (T_0,0,0)$ as $t\rightarrow \infty$. Let us choose $\epsilon >0$ small enough and $t_0>0$ sufficiently large such that
\[ T_0-\epsilon <\tilde{T} (t)<T_0+\epsilon, \;  0<\tilde I(t)<\epsilon \]
for $t>t_0-\tau$. Then we have for $t>t_0$
\[
\Bigg\{ \begin{array}{l}
\dot{ \tilde{I}}(t)\geq b(T_0-\epsilon)\tilde V(t-\tau)+\left( -\mu +a\left( 1-\dfrac{T_0+\epsilon +\epsilon}{T_{\max}}\right)\right) \tilde I(t),\\
\dot{ \tilde{V}}(t)=p\tilde{I}(t)-c\tilde{V}(t)
\end{array}
\]
Let us consider the matrix defined by
\[ A_\epsilon =\left( \begin{matrix}
 -\mu +a\left( 1-\dfrac{T_0+\epsilon +\epsilon}{T_{\max}}\right) &  b(T_0-\epsilon )\\ p & -c
\end{matrix} \right).\]

Since $A_\epsilon$ admits positive off-diagonal elements, Perron-Frobenius theorem implies that there is a positive eigenvector $\hat {V}$ for the maximum eigenvalue $\lambda _1$ of $A_\epsilon$. Moreover, since $R_0>1$, then $c\mu -ac\left( 1-\dfrac{T_0+2\epsilon}{T_{\max}} \right)- bp(T_0-\epsilon)<0$ for $\epsilon$ small enough, by a simple computation we see that $\lambda _1$ is positive.\\
Let us consider
\begin{equation} \label{hiv13}
\bigg\{ \begin{array}{l}
\dot I(t)=b(T_0-\epsilon)V(t-\tau ) +\left( -\mu +a\left( 1-\dfrac{T_0+\epsilon +\epsilon}{T_{\max}}\right) \right)I(t)\\
\dot V(t)=pI(t)-cV(t)
\end{array}.
\end{equation}
Let $v=(v_1,v_2)$ and $l>0$ be small enough such that
\[ \begin{aligned}
lv_1&<\tilde{I}(t_0+\theta),\\
lv_2&<\tilde{V}(t_0+\theta),
\end{aligned}\]
for $\theta \in [-\tau ,0]$ if $(I(t),V(t))$ is a solution of system \eqref{hiv13} satisfying $I(t)=lv_1$, $V(t)=lv_2$ for $t_0-\tau \leq t\leq t_0$.

Since the semiflow of system \eqref{hiv13} is monotone and $A_\epsilon v>0$, it follows that $I(t)$ and $V(t)$ are strictly increasing and $I(t)\rightarrow \infty$, $V(t)\rightarrow \infty$ as $t\rightarrow \infty$. Note that $\tilde{I}\geq I(t)$, $\tilde{V}(t)\geq V(t)$ for $t>t_0$. We have $\tilde{I}(t)\rightarrow \infty$, $\tilde{V}(t)\rightarrow \infty$ as $t\rightarrow \infty$. At this time, we are able to conclude form theorem \ref{theorem} that $C_0$ repels the positive solutions of system \eqref{hiv3} uniformly. Incorporating this into lemma \ref{lemma} and theorem \ref{theorem}, we know that the system \eqref{hiv3} is permanent.
\end{proof}

\section{Estimation of the length of delay to preserve stability}

Let $T(t)=T_2+X(t),I(t)=I_2+Y(t),V(t)=V_2+Z(t)$.\\
We consider the linearized system \eqref{hiv3} about the equilibrium $E_2$ and get
\begin{equation} \begin{aligned} \label{hiv14}
\dfrac{dX}{dt}=&\left( a-d-\frac{2aT_2}{T_{\max}}-\frac{aI_2}{T_{\max}}-\frac{bV_2}{T_{\max}}\right)X -\dfrac{aT_2}{T_{\max}}Y-\frac{bT_2}{(1+\alpha V_2)^2}Z \\
\dfrac{dY}{dt}=& -\frac{aI_2}{T_{\max}}X+ \left( a-\mu -\frac{aT_2}{T_{\max}}-\frac{2aI_2}{T_{\max}} \right)Y+\frac{bV_2}{1+\alpha V_2}X_{\tau} +\frac{bT_2}{(1+\alpha V_2)^2}Z_\tau \\
\frac{dZ}{dt}=&pY-cZ
\end{aligned} \end{equation}

Taking Laplace transform of the system given by \eqref{hiv14}, we get
\begin{equation} \begin{aligned} \label{hiv15}
s \mathcal{L}[X]-X(0)=&\left( a-d-\frac{2aT_2}{T_{\max}}-\frac{aI_2}{T_{\max}}-\frac{bV_2}{1+\alpha V_2} \right)L[X]-\frac{aT_2}{T_{\max}}
\mathcal{L}[Y]\\
&-\frac{bT_2}{(1+\alpha V_2)^2} \mathcal{L}[Z]\\
s\mathcal{L}[Y]-Y(0)=&-\frac{aI_2}{T_{\max}}
\mathcal{L}[X]- \left( a-\mu -\frac{aT_2}{T_{\max}}-\frac{2aI_2}{T_{\max}}\right)
\mathcal{L}[Y]+\frac{bV_2}{1+\alpha V_2}
\mathcal{L}[X_\tau]\\
&+\frac{bT_2}{(1+\alpha V_2)^{2}}\mathcal{L}[Z_\tau]\\
s\mathcal{L}[Z]-Z(0)=&p\mathcal{L}[Y]-c
\mathcal{L}[Z]
\end{aligned} \end{equation}
The expressions $\mathcal{L}[Z_{\tau}]$ and $\mathcal{L}[X_{\tau}]$ are equivalent to
\[\mathcal{L}\left[  X_{\tau}\right]  =\int_{0}^{\infty}e^{-st}X\left(  t-\tau\right)
dt=\int_{0}^{\tau}e^{-st}X\left(  t-\tau\right)  dt+\int_{\tau}^{\infty
}e^{-st}X\left(  t-\tau\right)dt \]
taking $t=t_{1}+\tau$ we can express the last equation as
\begin{align*}\mathcal{L}[ X_{\tau}]&=\int_{-\tau}^{0}e^{-s\left(  t_{1}+\tau\right)
}X\left(  t_{1}\right)  dt_{1}+\int_{0}^{\infty}e^{-s\left(  t_{1}
+\tau\right)  }X\left(  t_{1}\right)  dt_{1}\\
&  =e^{-s\tau}\int_{-\tau}^{0}e^{-st_{1}}X\left(  t_{1}\right)  dt_{1}
+e^{-s\tau}\int_{0}^{\infty}e^{-st_{1}}X\left(  t_{1}\right)  dt_{1}\\
&  =e^{-s\tau}K_{1}+e^{-s\tau}%
\mathcal{L}[X].
\end{align*}
In the same way
\[\mathcal{L}[Z_{\tau}]=\int_{0}^{\infty}e^{-st}Z\left(  t-\tau\right) dt=\int_{0}^{\tau}e^{-st}Z\left( t-\tau\right)  dt+\int_{\tau}^{\infty }e^{-st}Z\left(  t-\tau\right)  dt.
\]
We have
\begin{align*}\mathcal{L}
\left[  Z_{\tau}\right]   &  =\int_{-\tau}^{0}e^{-s\left(  t_{1}+\tau\right)
}Z\left(  t_{1}\right)  dt_{1}+\int_{0}^{\infty}e^{-s\left(  t_{1}
+\tau\right)  }Z\left(  t_{1}\right)  dt_{1}\\
&  =e^{-s\tau}\int_{-\tau}^{0}e^{-st_{1}}Z\left(  t_{1}\right)  dt_{1}
+e^{-s\tau}\int_{0}^{\infty}e^{-st_{1}}Z\left(  t_{1}\right)  dt_{1}\\
&  =e^{-s\tau}K_{2}+e^{-s\tau}
\mathcal{L}[Z].\end{align*}
Replacing  $\mathcal{L}[X_{\tau}]$ and $\mathcal{L}[Z_{\tau} ]$ on system \eqref{hiv15} we can clear $\mathcal{L}[X]$ and $\mathcal{L}[Z]$ and we can write
\[ (A-sI)\left( \begin{array}{c}
\mathcal{L}[X]\\ \mathcal{L}[Y]\\ \mathcal{L}[Z]
\end{array} \right)=B\]
where
\[ \begin{aligned}
A=&
\left( \begin{matrix}
a-d-\frac{2aT_2}{T_{\max}}-\frac{aI_2}{T_{\max}}-\frac{bV_2}{1+\alpha V_2} & -\frac{aT_2}{T_{\max}} & -\frac{bT_2}{(1+\alpha V_2)^2} \\
-\frac{aI_2}{T_{\max}}+\frac{bV_2}{1+\alpha V_2}e^{-s\tau} & a-\mu -\frac{aT_2}{T_{\max}}-\frac{2aI_2}{T_{\max}} & \frac{bV_2}{(1+\alpha V_2)^2}e^{-s\tau} \\
0 & p & c
\end{matrix} \right) \\
B=& \left( \begin{matrix}
X(0) \\ Y(0) +(K_1+K_2)e^{-s\tau} \\ Z(0)
\end{matrix} \right).
\end{aligned} \]
The inverse Laplace transformations of $\mathcal{L}[X]$, $\mathcal{L}[Y]$ and $\mathcal{L}[Z]$ will have terms which exponentially increase with time if $\mathcal{L}[X]$, $\mathcal{L}[Y]$ and $\mathcal{L}[Z]$ have poles with positive real parts. For $E_2$ to be locally asymptotically stable, a necessary and sufficient condition is that all poles of $\mathcal{L}[X]$, $\mathcal{L}[Y]$ and $\mathcal{L}[Z]$ have negative real parts. We will employ the Nyquist criteria, which states that if $X$ is the arc length of a curve encircling the right half plane, the curve $\mathcal{L}[X]$ will encircle the origin a number of times equal to the difference between the numbers of poles and zeroes of $\mathcal{L}[X]$ in the right half plane. This criteria is applied to $X$, $Y$ and $Z$.Let
\[ F\left(  s\right)  =s^{3}+a_2s^{2}+a_1s+a_0+(b_1s+b_0)e^{-s\tau} \]
obtained from the Laplace transform. Note that $F(s)=0$ is the characteristic equation of system \eqref{hiv3} on the equilibrium $E^*$ and the zeroes are the poles of $\mathcal{L}[X]$, $\mathcal{L}[Y]$ and $\mathcal{L}[Z]$. The conditions for local asymptotic stability of $E_2$ are given in \cite{nyquist}
\begin{equation} \begin{aligned} \label{hiv16}
\Re[F(iv_{0})]&=0 \\
\Im[F(iv_{0})]&>0,
\end{aligned}\end{equation}
and $v_{0}$ is the smallest positive root of the first equation of \eqref{hiv16}. In our case, \eqref{hiv16} gives
\begin{eqnarray}
-a_2v_0^2+a_0+b_1v_0 \sin (v_0\tau)+b_0\cos (v_0\tau)&=&0 \label{hiv17} \\
-v_0^3+a_1v_0+b_1v_0\cos (v_0\tau)-b_0\sin (v_0\tau)&>&0.  \label{hiv18}
\end{eqnarray}

If \eqref{hiv17} and \eqref{hiv18} are satisfied simultaneously, they are sufficient conditions to guarantee stability. We shall apply them to get an estimate on the length of delay. Our aim is to find an upper bound $v^+$ on $v_0$, independent of $\tau$ and then to estimate $\tau$ so that \eqref{hiv18} hold for all values of $v$, $0\leq v\leq v^+$ and hence in particular $v=v_0$. We rewrite \eqref{hiv17} as
\begin{equation} \label{hiv185}
a_2v_0^2=a_0+b_1v_0\sin (v_0\tau)+b_0\cos (v_0\tau ).
\end{equation}
Maximizing $a_0+b_1v_0\sin (v_0\tau)+b_0\cos (v_0\tau )$ subject to
\[|\sin (v_0\tau )|\leq 1, \quad |\cos (v_0\tau)|\leq 1,\]
we obtain
\begin{equation} \label{hiv19}
a_2v_0^2\leq |a_0|+|b_1|v_0+|b_0|.
\end{equation}
Hence, if
\[ v^+=\dfrac{|b_1|+\sqrt{b_1^2+4a_2(|a_0|+|b_0|)}}{2a_2}\]
then clearly from \eqref{hiv19} we have $v_0\leq v^+$.\\
We can rewrite \eqref{hiv18} as
\begin{equation} \label{hiv20}
v_0^2<a_1+b_1\cos (v_0\tau)-\dfrac{a_2b_0}{v_0}\sin (v_0\tau).
\end{equation}
Replacing \eqref{hiv185} in \eqref{hiv20} and rearranging we get
\[ (b_0-a_1b_1)(\cos (v_0\tau)-1)+\left( b_1v_0+\frac{a_2b_0}{v_0}\right) \sin (v_0\tau)<a_2a_1-a_0+a_1b_1-b_0.\]
Using the bounds
\[ \begin{aligned}
\left| b_1v_0+\frac{a_2b_0}{v_0}\right||\sin (v_0\tau)|& \leq \left| b_1v^++\frac{a_2b_0}{v^+}\right|(v^+\tau)=(|b_1|(v^+)^2+|a_2b_0|)\tau, \\
\left| b_0-a_1b_1\right||\cos (v_0\tau)-1| & \leq 2\left| b_0-a_1b_1\right| \sin^2 \left(\frac{v_0\tau}{2}\right) \leq \frac{\left| b_0-a_1b_1\right|}{2}(v^+)^2\tau ^2,
\end{aligned} \]
we obtain from \eqref{hiv20} $K_1\tau ^2+K_2\tau <K_3$, where
\[K_1=\frac{|b_2-a_1b_1|}{2}(v^+)^2, \quad K_2=|b_1|(v^+)^2+|a_2b_2|, \quad K_3=a_2a_1+a_1b_1-a_0-b_2\]
thus if $K_1\tau 2+K_2\tau<K_3$ holds, then the inequality \eqref{hiv17} is satisfied. A positive root of $K_1\tau ^2+K_2\tau=K_3$ is given by
\[ \tau _+=\frac{1}{2K_1}(-K_2+\sqrt{K_2^2+4K_1K_3}).\]
For $0\leq \tau \leq \tau _+$, the Nyquist criteria holds. $\tau _+$ gives estimate for the length of delay for which stability is preserved. Here $\tau _+$ is dependent of the system parameters. Hence we can conclude that the estimate for the delay is totally dependent on system parameters for which the equilibrium $E_2$ is locally asymptotically stable.
\begin{theorem}
If there exist a parameter $0\leq \tau \leq \tau _+$ such that $K_1\tau ^2+K_2\tau <K_3$, then $\tau _+$ is the maximum value (length of delay) of $\tau$ for which $E_2$ is asymptotically stable.
\end{theorem}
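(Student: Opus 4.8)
The plan is to collect the Nyquist-criterion computations of this section into a single sufficient condition on $\tau$. First I would recall that, after Laplace-transforming the linearized system \eqref{hiv14}, the local asymptotic stability of $E_2$ is equivalent to requiring that all poles of $\mathcal{L}[X]$, $\mathcal{L}[Y]$, $\mathcal{L}[Z]$ lie in the open left half-plane. Since these poles are precisely the zeros of $F(s)=s^3+a_2s^2+a_1s+a_0+(b_1s+b_0)e^{-s\tau}$, the Nyquist criterion from \cite{nyquist} reduces the requirement to the pair \eqref{hiv16}, that is $\Re[F(iv_0)]=0$ together with $\Im[F(iv_0)]>0$ at the smallest positive root $v_0$ of the real-part equation. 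Writing the real and imaginary parts of $F(iv_0)$ explicitly produces \eqref{hiv17} and \eqref{hiv18}, so it suffices to exhibit an interval of $\tau$ on which both hold.

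The heart of the proof is to turn \eqref{hiv18} into a condition on $\tau$ alone. First I would use \eqref{hiv17}, rewritten as \eqref{hiv185}, and maximize its right-hand side over $|\sin(v_0\tau)|\le1$ and $|\cos(v_0\tau)|\le1$; this yields \eqref{hiv19} and hence the $\tau$-uniform bound $v_0\le v^+$, with $v^+$ the stated positive root of the associated quadratic in $v_0$. Next I would substitute \eqref{hiv185} into the rearranged form \eqref{hiv20} of \eqref{hiv18} and control the resulting trigonometric terms with the elementary estimates $\bigl|\sin(v_0\tau)\bigr|\le v^+\tau$ and $\bigl|\cos(v_0\tau)-1\bigr|=2\sin^2(v_0\tau/2)\le\tfrac12(v^+)^2\tau^2$, replacing $v_0$ by $v^+$ in the coefficients. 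The upshot is that the strict inequality $K_1\tau^2+K_2\tau<K_3$, with the constants $K_1,K_2,K_3$ exhibited above, is sufficient to guarantee \eqref{hiv18} for every $v\in[0,v^+]$ and in particular for $v=v_0$; together with \eqref{hiv17} this forces both Nyquist conditions \eqref{hiv16} to hold.

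To finish I would analyze the quadratic $K_1\tau^2+K_2\tau=K_3$. Since $K_1>0$ and $K_3>0$, it has the unique positive root $\tau_+=\tfrac{1}{2K_1}\bigl(-K_2+\sqrt{K_2^2+4K_1K_3}\bigr)$, and the map $\tau\mapsto K_1\tau^2+K_2\tau$ is strictly increasing on $[0,\infty)$; hence $K_1\tau^2+K_2\tau<K_3$ holds exactly for $0\le\tau<\tau_+$. For every such $\tau$ the two Nyquist conditions are satisfied, so $E_2$ is locally asymptotically stable, and $\tau_+$ is therefore the guaranteed length of delay claimed by the theorem.

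I expect the main obstacle to be the direction of the estimates rather than any single computation: replacing $v_0$ by its upper bound $v^+$ in the coefficients of \eqref{hiv20}, and discarding the trigonometric factors by their extreme values, must each weaken the inequality so that $K_1\tau^2+K_2\tau<K_3$ is genuinely sufficient for \eqref{hiv18} and not merely necessary. In particular one must track the sign of $b_0-a_1b_1$ so that the absolute values in the bounds collapse consistently into the stated $K_1$ and $K_2$, and confirm that $v^+$ is an upper bound independent of $\tau$, since the entire estimate on the admissible delay rests on that uniformity.
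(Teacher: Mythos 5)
Your proposal follows essentially the same route as the paper's own derivation in this section: Laplace-transforming the linearization \eqref{hiv14}, identifying the poles with the zeros of $F(s)$, invoking the Nyquist conditions \eqref{hiv16}--\eqref{hiv18}, bounding $v_0\le v^+$ via \eqref{hiv19}, and then converting \eqref{hiv20} into the quadratic sufficient condition $K_1\tau^2+K_2\tau<K_3$ with positive root $\tau_+$. Your closing remark about tracking the sign of $b_0-a_1b_1$ and the direction of the estimates is a fair caution (the paper itself glosses over this, even writing $b_2$ where $b_0$ is meant in the constants $K_1,K_2,K_3$), but the argument you outline is the paper's argument.
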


\section{Numerical Simulations}
To explore the behaviour of the system \eqref{hiv3} and illustrate the stability of equilibria solutions we used, dde23 \cite{dde23}, based on Runge-Kutta methods. We consider the values for the parameters as in \cite{dahari1}.

In figure 1 we illustrate the stability of $E_1$  with the following parameters $s=8\times 10^5$, $d=4.7\times 10^{-3}$, $\mu=0.35$, $a=1$, $T_{\max}=0.7\times 10^{7}$, $b=0.6\times 10^{-7}$, $c=5.9$, $p=5.4$ $\alpha=0.001$, with this values for the parameters, $R_0=0.9237$ so we are under conditions of theorem 1 and 3. We show the dynamic of solutions for several values of $\tau$ and we can appreciate that the solution approximates to equilibrium with oscillations as the values of $\tau$ increases.

\begin{figure}[H]
\centering
\subfigure[Healthy, $T(t)$ ]{\includegraphics[scale=0.3]{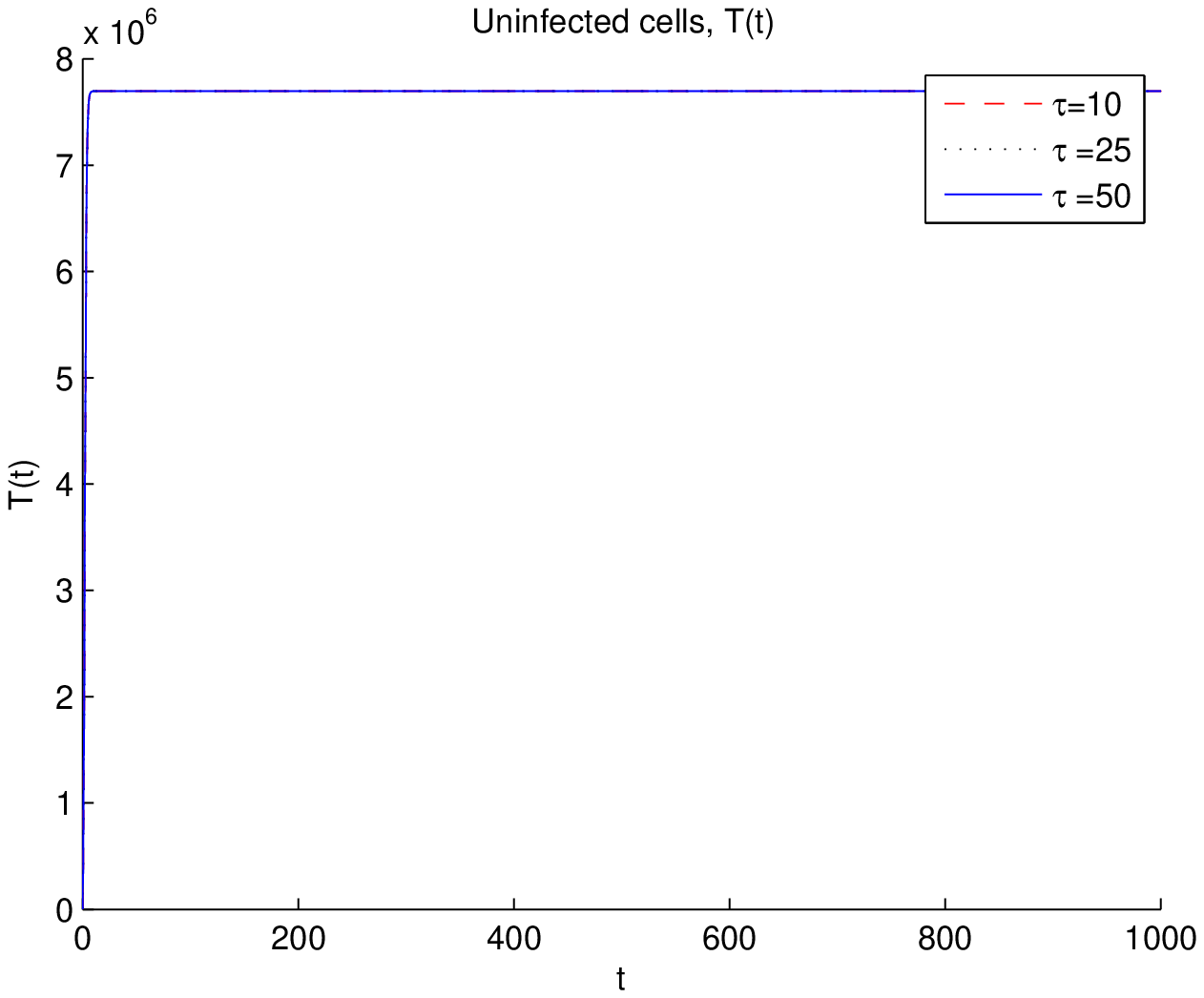}}
\subfigure[Infected, $I(t)$]{\includegraphics[scale=0.3]{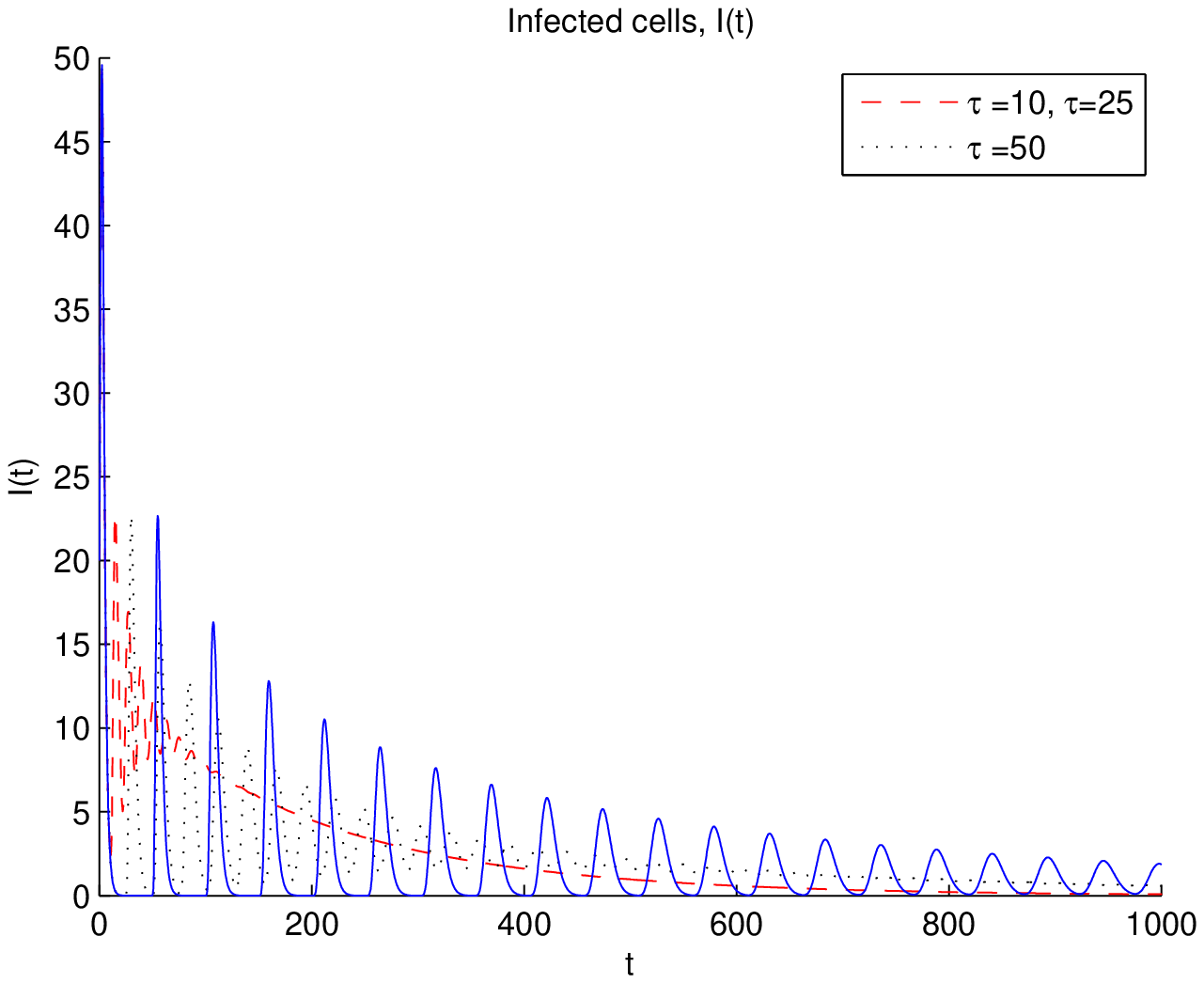}}
\subfigure[Virus, $V(t)$]
{\includegraphics[scale=0.3]{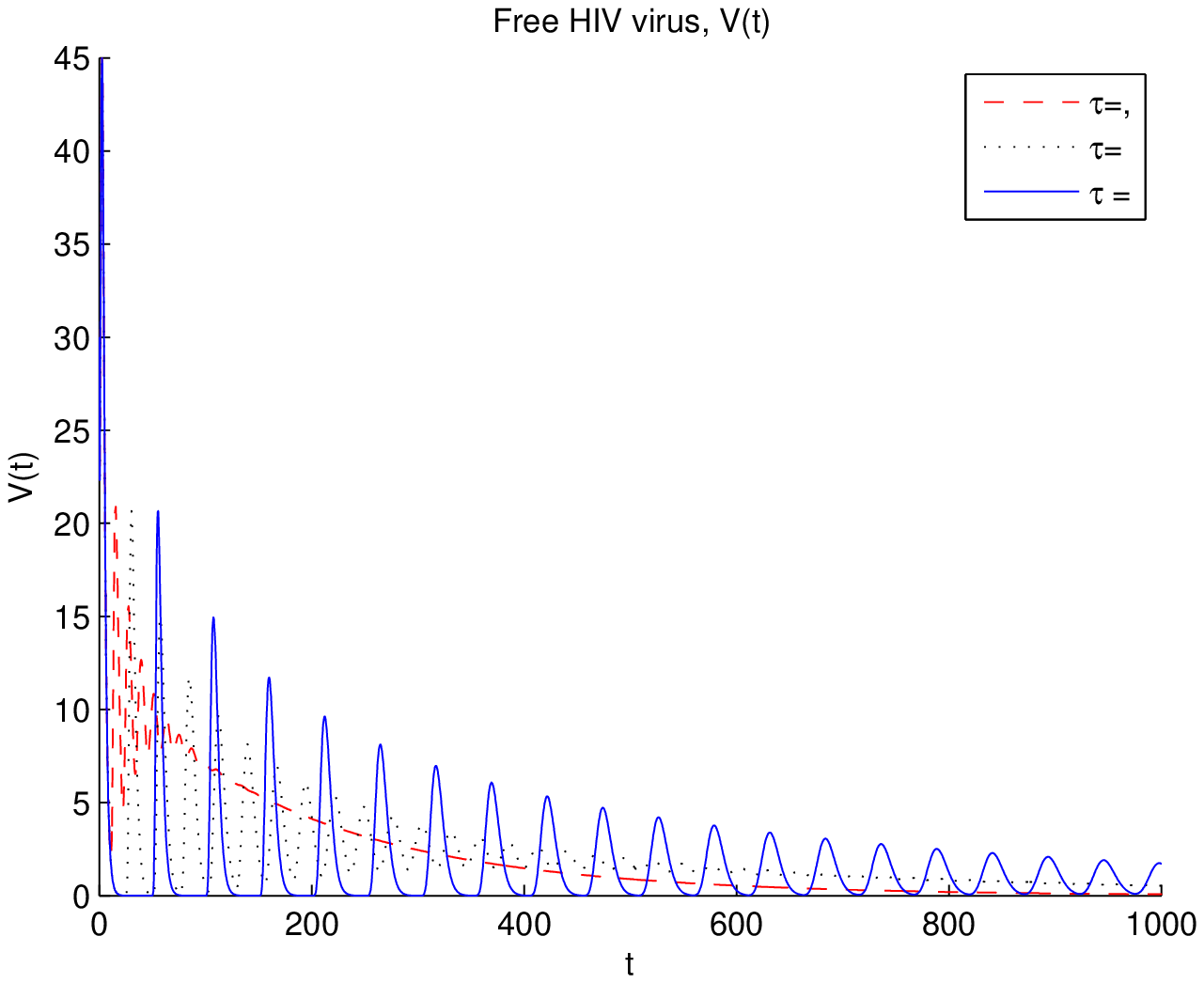}}
\caption{Stability of the $E_1$ equilibrium with several values of $\tau$}
\label{figure1}
\end{figure}

In figure 2 we see the same dynamic for system \eqref{hiv3} with a large time and different delay $\tau$, so we can conclude that delay has no effect on stability of infection-free equilibrium for our model.

\begin{figure}[H]
\centering
\subfigure[Dynamics, $T(t)$, $I(t)$, $V(t)$]{\includegraphics[scale=0.3]{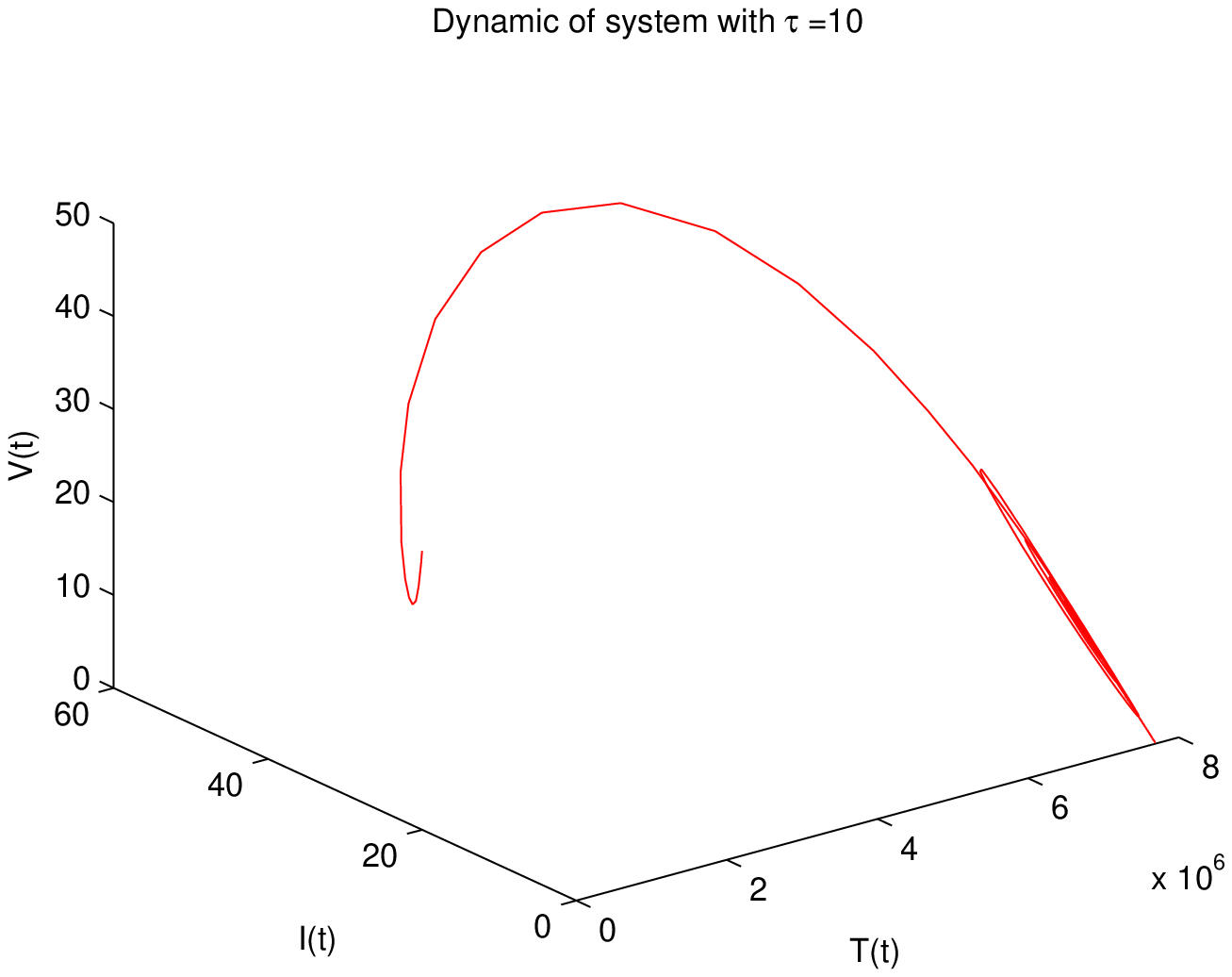}}
\subfigure[Dynamics, $T(t)$, $I(t)$, $V(t)$]{\includegraphics[scale=0.3]{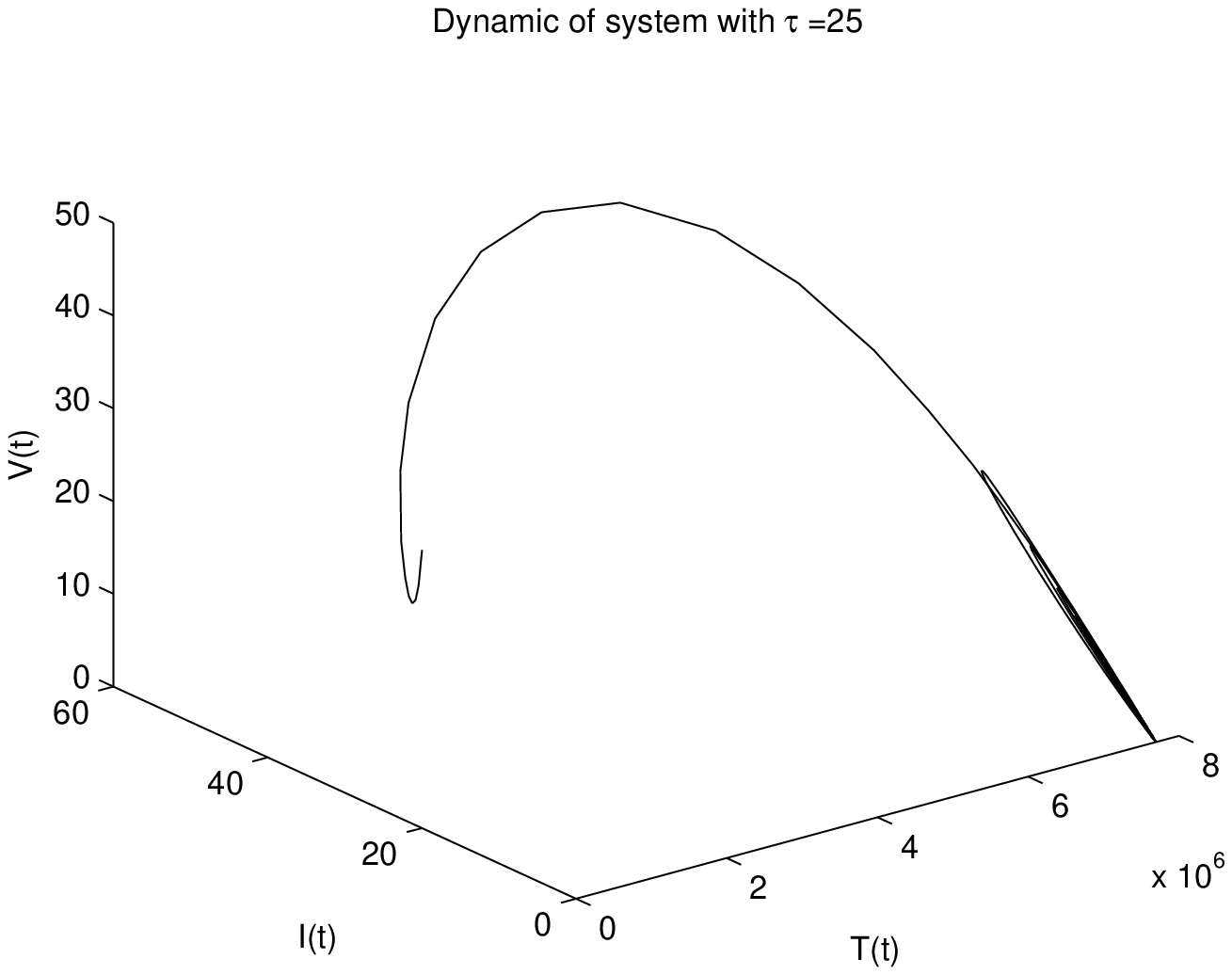}}
\subfigure[Dynamics, $T(t)$, $I(t)$, $V(t)$]{\includegraphics[scale=0.3]{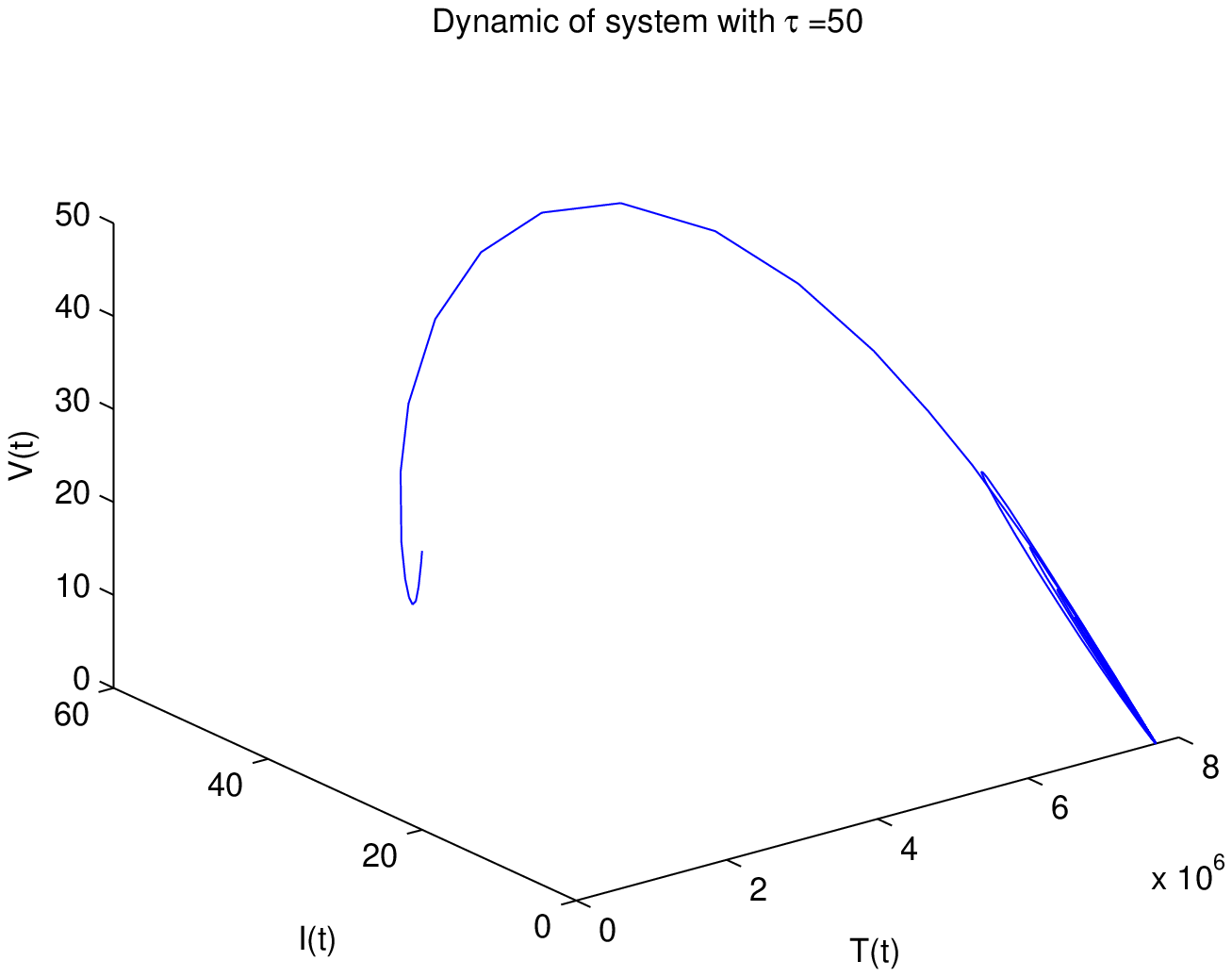}}
\caption{Phase space for different values of $\tau$, which illustrates the stability of infection-free equilibrium $E_1$}
\label{figure2}
\end{figure}

For figure 3 we consider the following values for parameters $s=8\times 10^5$, $d=4.7\times 10^{-3}$,$\mu=0.3$, $a=2$, $T_{\max}=0.7\times 10^7$, $b=0.6\times 10^{-7}$, $c=5.9$, $p=5.4$ and $\alpha=0.001$, in this case the equilibrium is $(7.363787665\times 10^6, 1.202557883, 1.100646198)$ with $R_0=1.0014820$, note that this case satisfies the conditions for global stability of $E_2$, as is establish in theorem 4.

\begin{figure}[H]
\centering
\subfigure[Healthy, $T(t)$ ]{\includegraphics[scale=0.3]{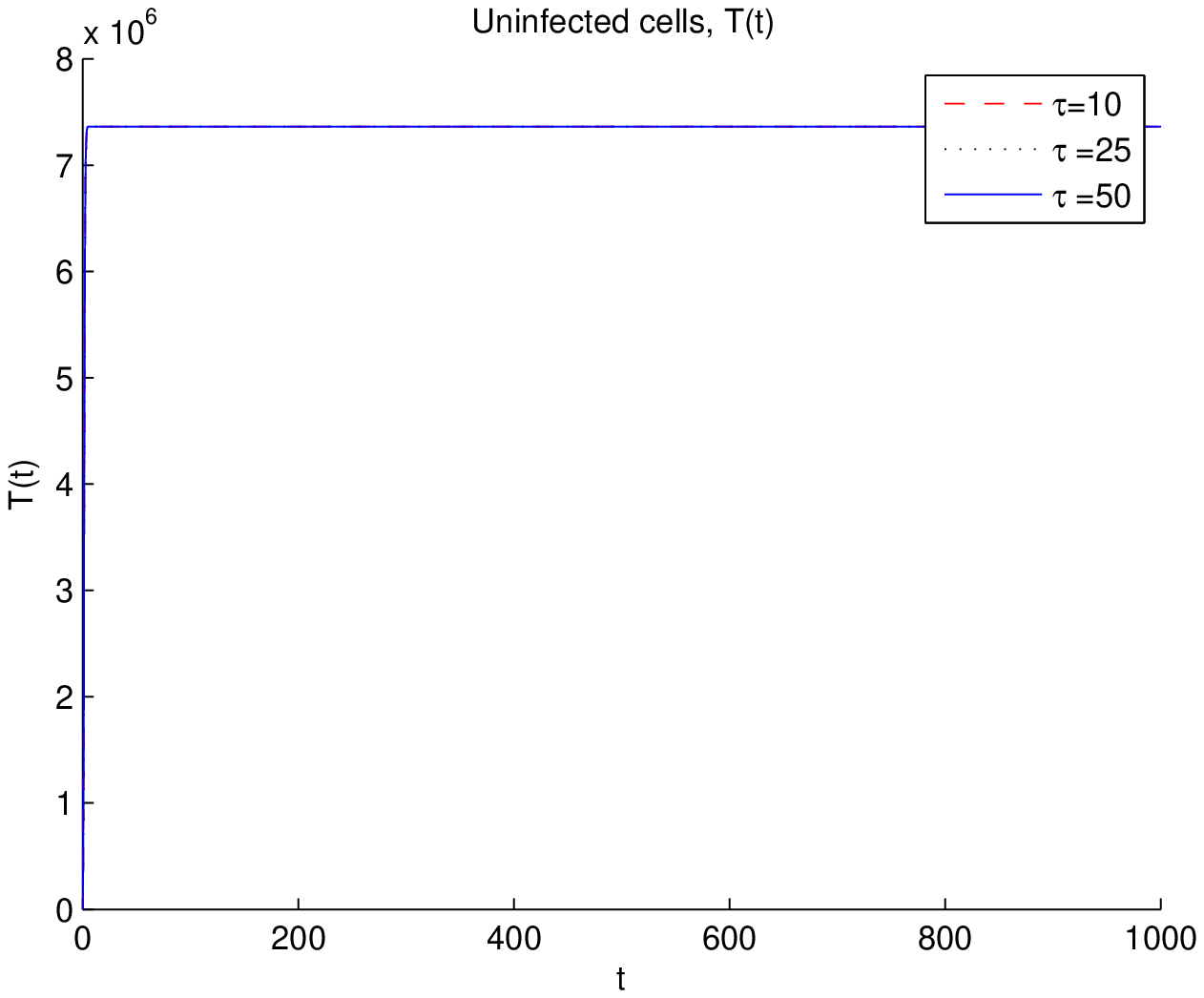}}
\subfigure[Infected, $I(t)$]{\includegraphics[scale=0.3]{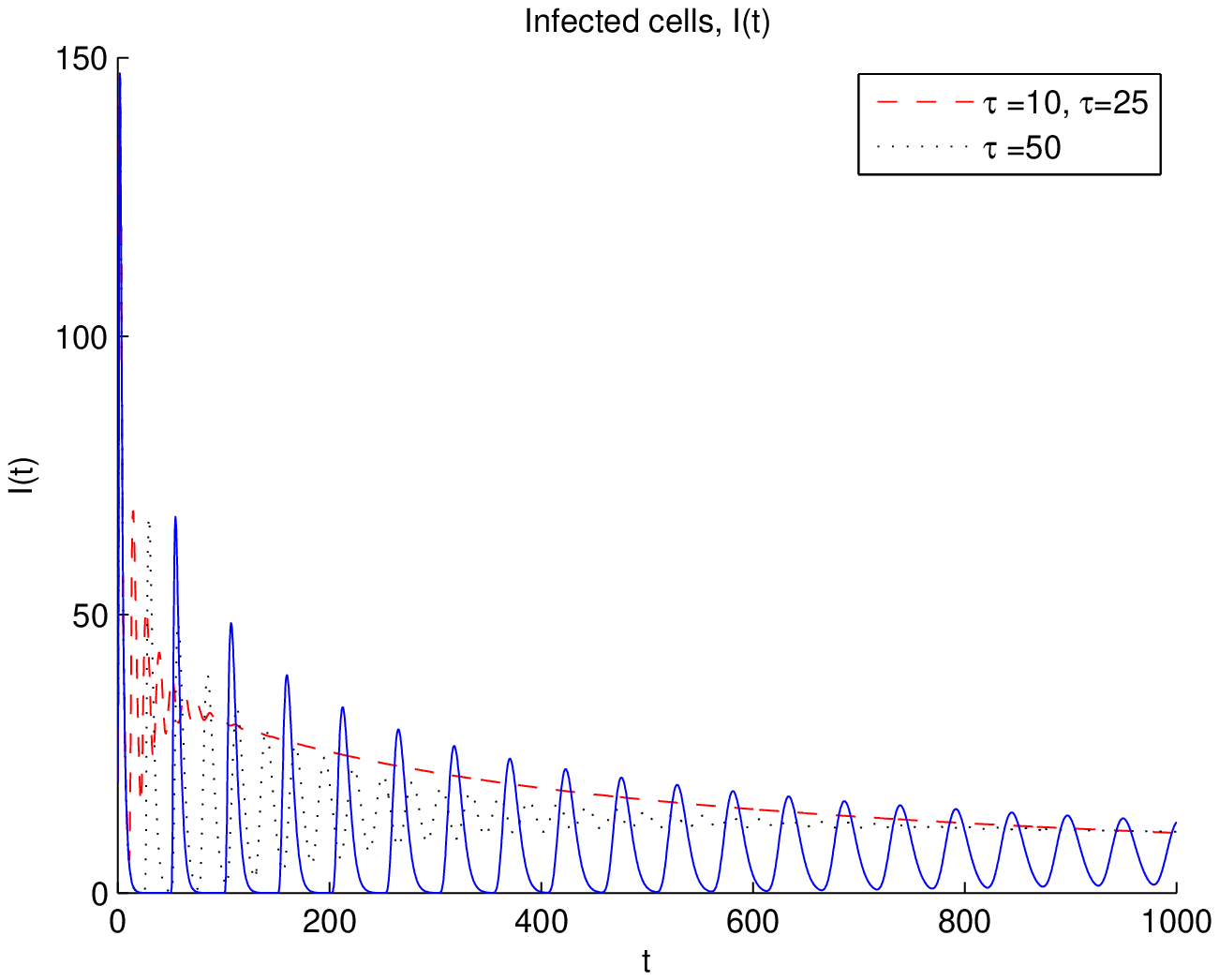}}
\subfigure[Virus, $V(t)$]
{\includegraphics[scale=0.3]{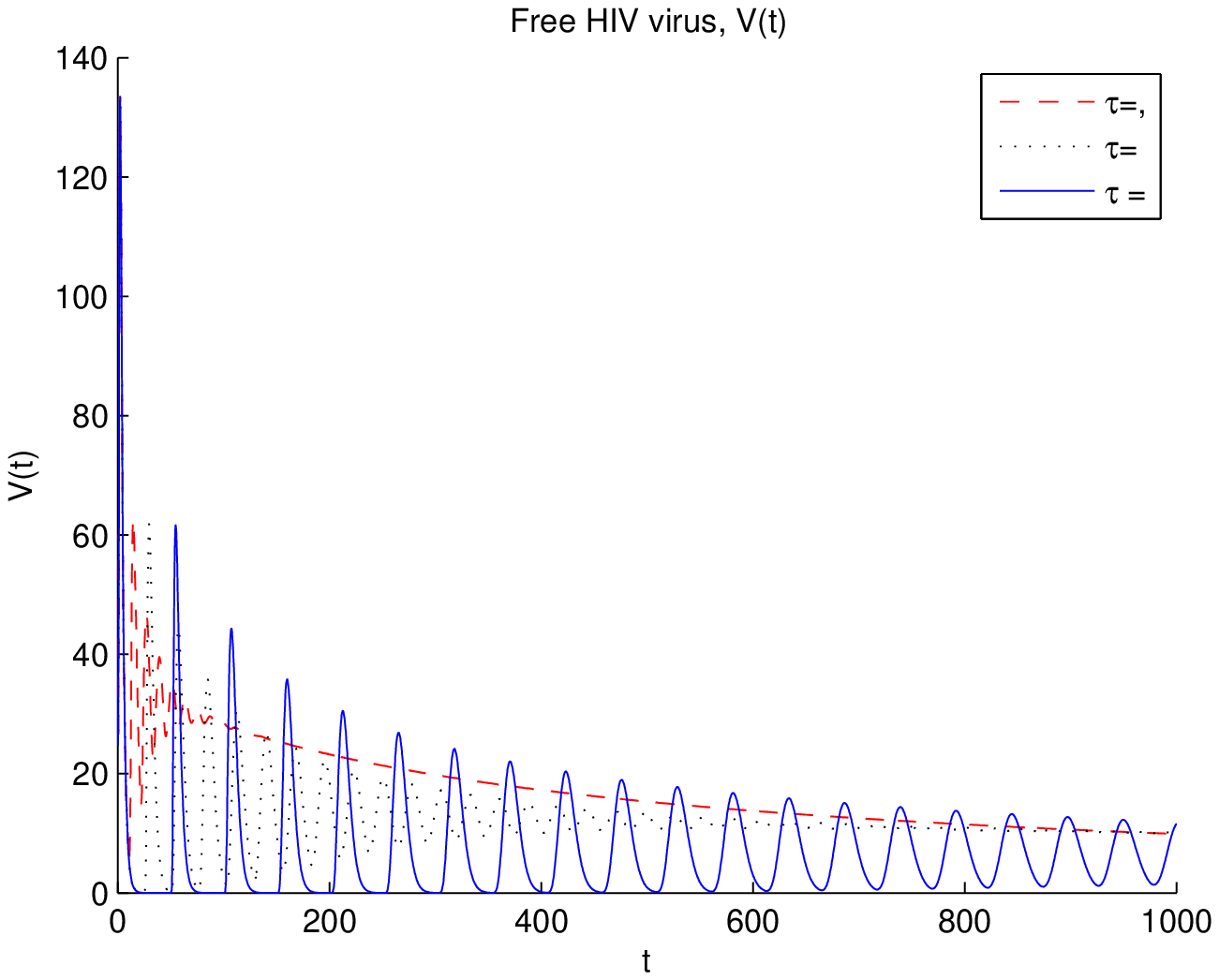}}
\caption{Stability of the infected equilibrium, $E_2$, with several values of $\tau$}
\label{figure3}
\end{figure}

In figure 4, we illustrate the the dynamic for system. \eqref{hiv3} with respect to stability of infected equilibrium for several values of $\tau$.
\begin{figure}[H]
\centering
\subfigure[Dynamics, $T(t)$, $I(t)$, $V(t)$]{\includegraphics[scale=0.3]{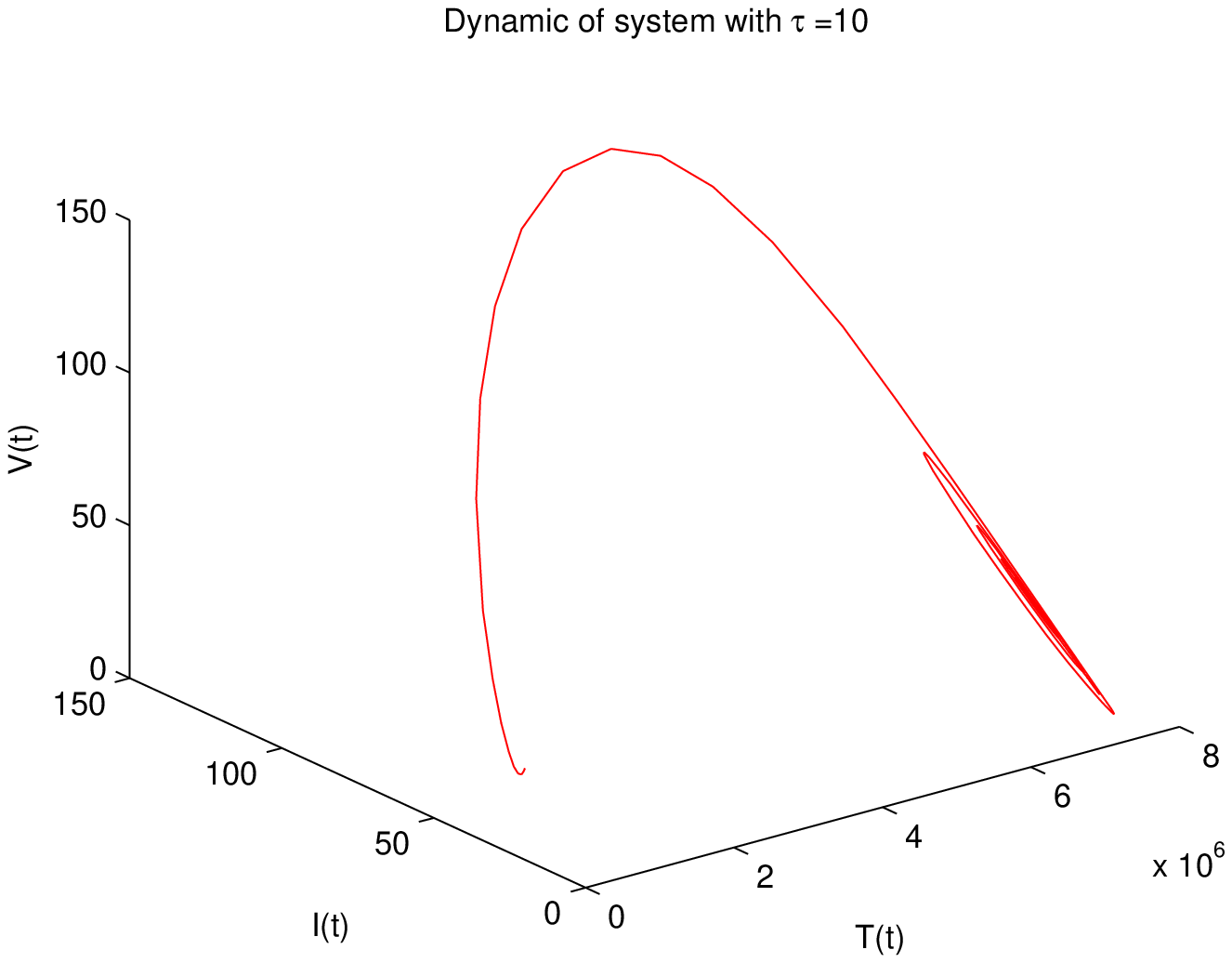}}
\subfigure[Dynamics, $T(t)$, $I(t)$, $V(t)$]{\includegraphics[scale=0.3]{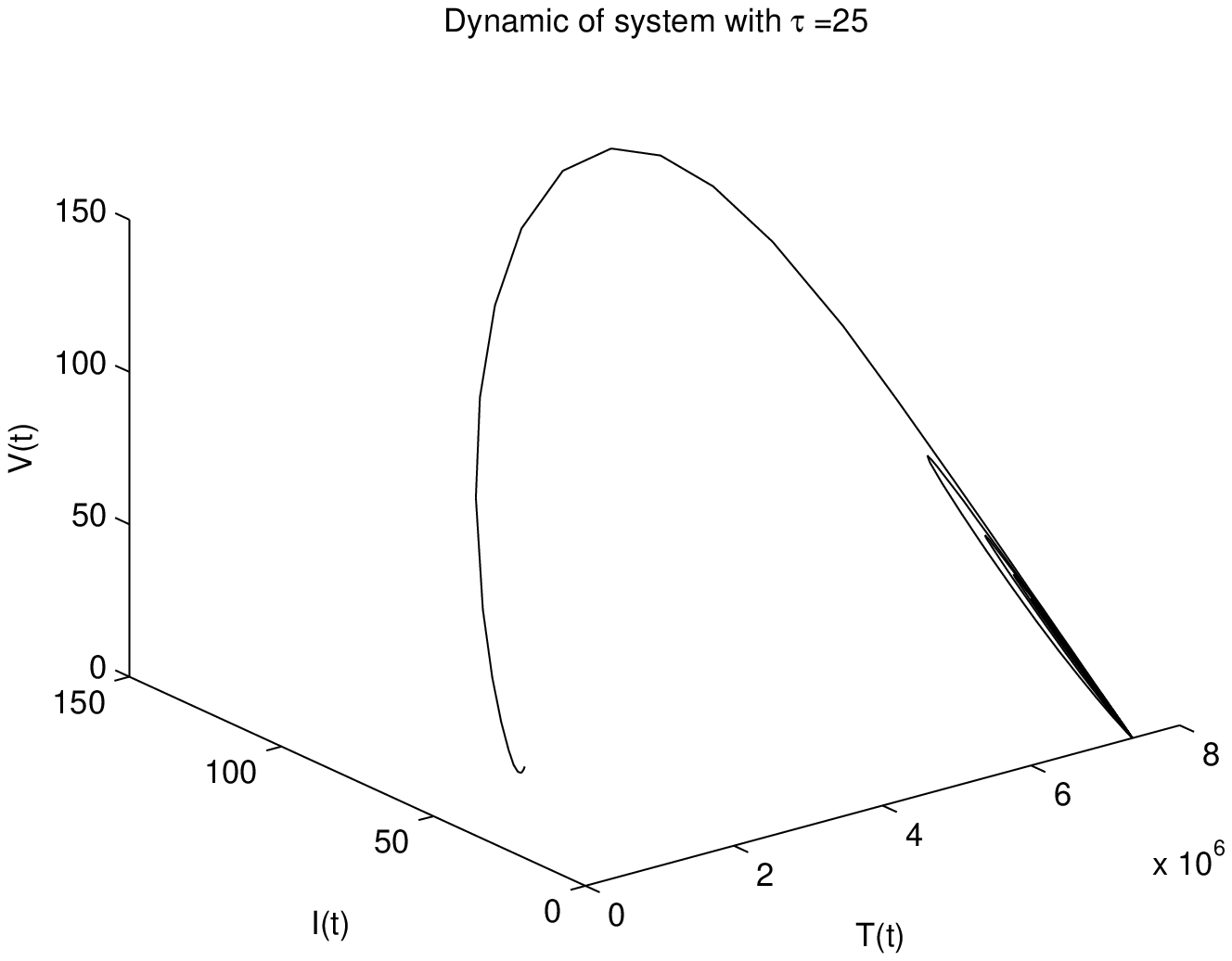}}
\subfigure[Dynamics, $T(t)$, $I(t)$, $V(t)$]{\includegraphics[scale=0.3]{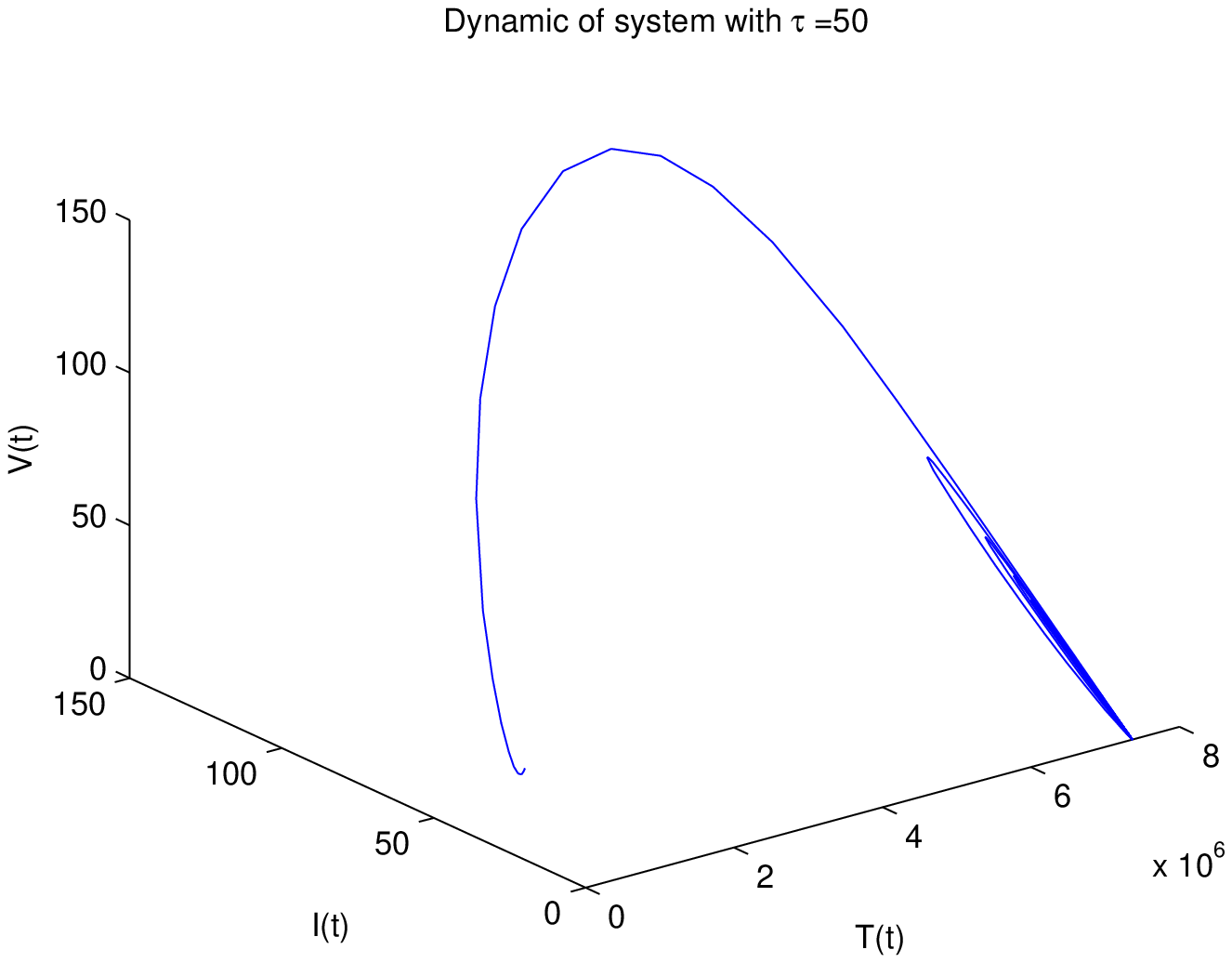}}
\caption{Phase space for different values of $\tau$, which illustrates the stability of infection equilibrium $E_2$}
\label{figure4}
\end{figure}

In figures \ref{figure5}, \ref{figure6} and \ref{figure7}, we illustrate the stability of $E_2$ according to theorem 5. In this case $s=0.01$, $d=0.02$, $a=0.95$, $T_{max}=1200$ $b=0.0027$ $\alpha=0.001$ $\mu=1$, $p=10$, $c=2.4$ with this values $R_0=13.48$ , and we illustrate for different values of $\tau$. The equilibrium is $(19.2, 106.7,444.5)$, we can observe periodic solutions for a large value of $\tau$, also we note that the condition for the global stability of endemic equilibrium is not satisfied.
\begin{figure}[H]
\centering
\subfigure[Healthy, $T(t)$ ]{\includegraphics[scale=0.3]{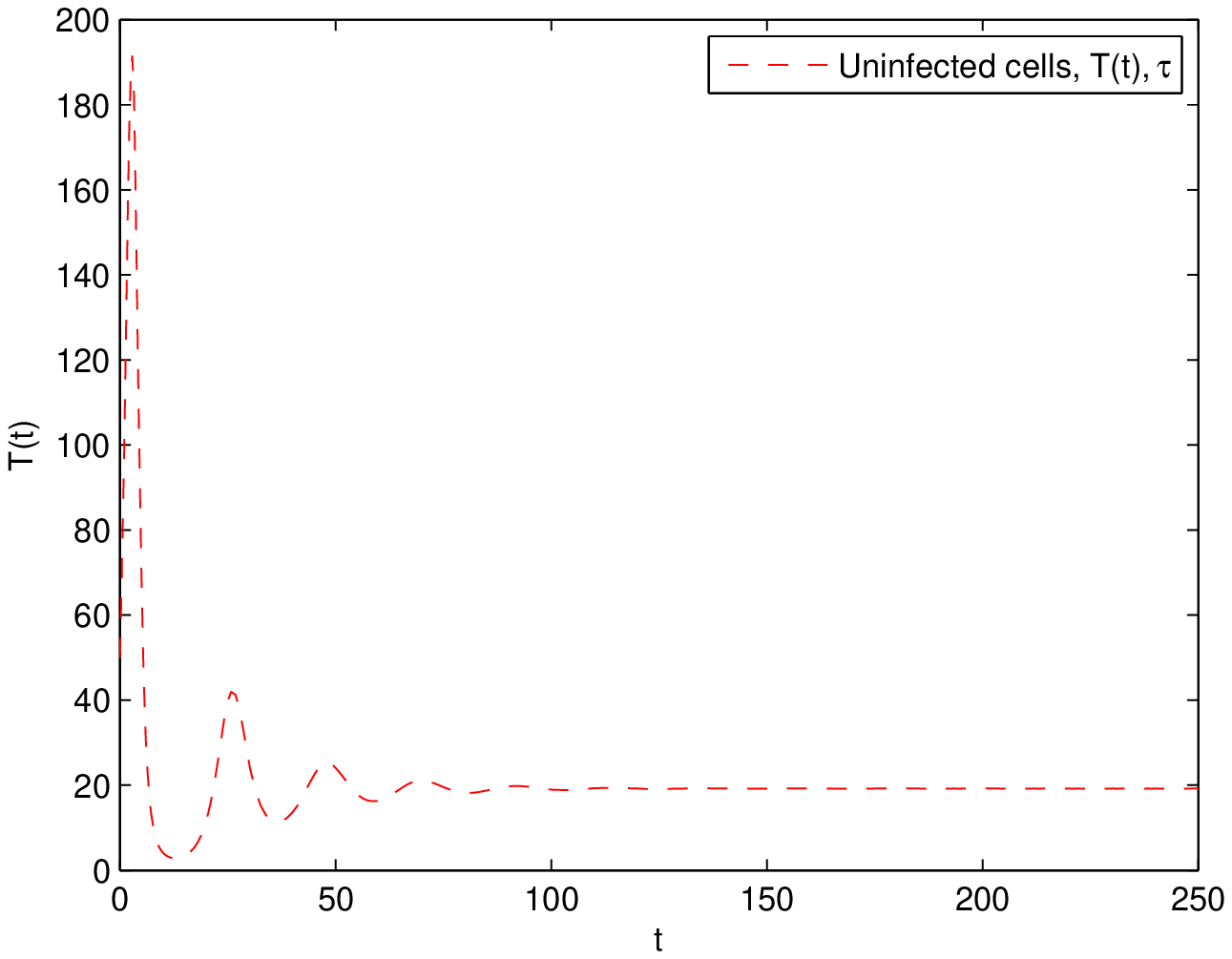}}
\subfigure[Infected, $I(t)$]{\includegraphics[scale=0.3]{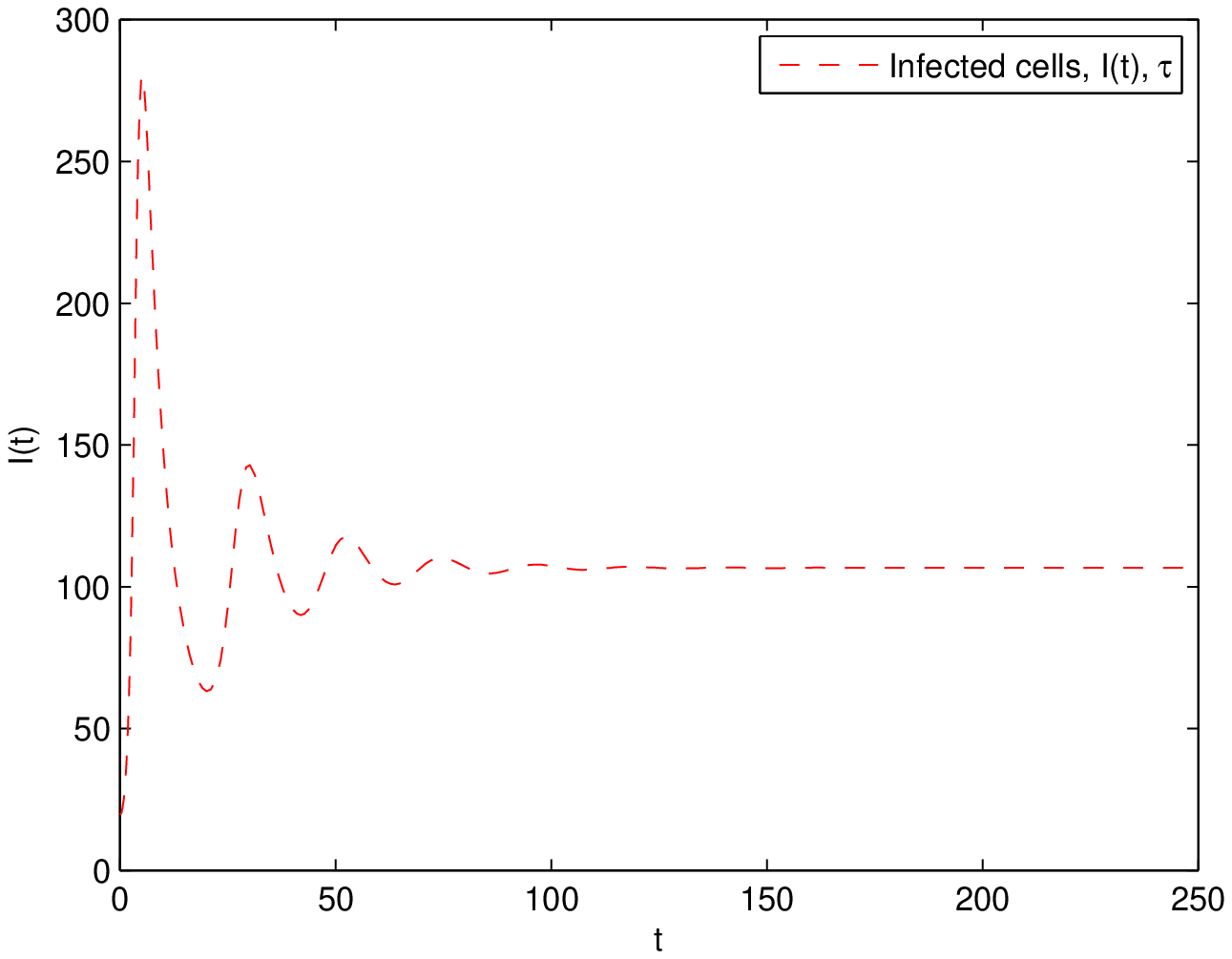}}
\subfigure[Virus, $V(t)$]{\includegraphics[scale=0.3]{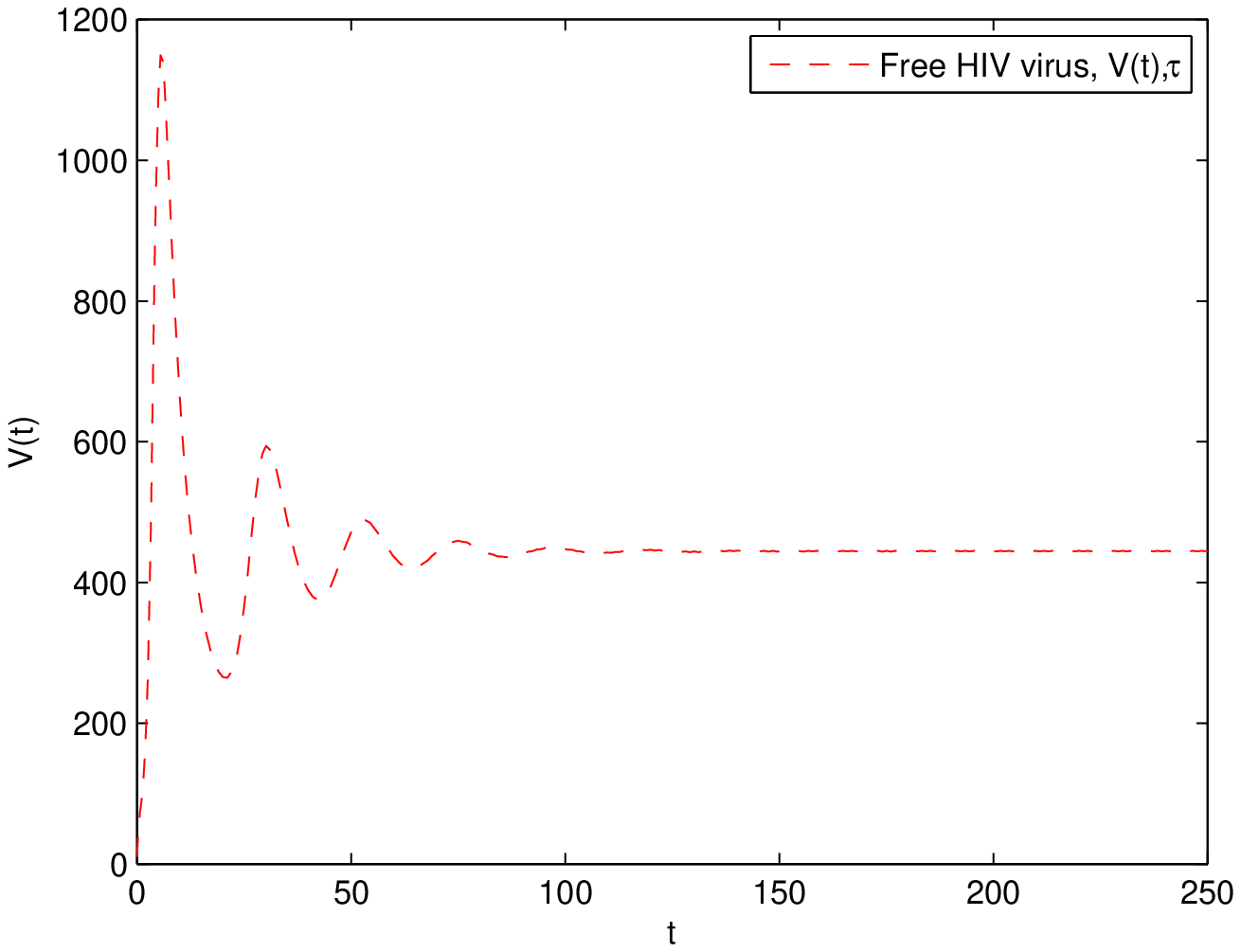}}
\caption{Stability of the $E_2$ equilibrium with $\tau =0.1$}
\label{figure5}
\end{figure}

\begin{figure}[H]
\centering
\subfigure[Healthy, $T(t)$ ]{\includegraphics[scale=0.3]{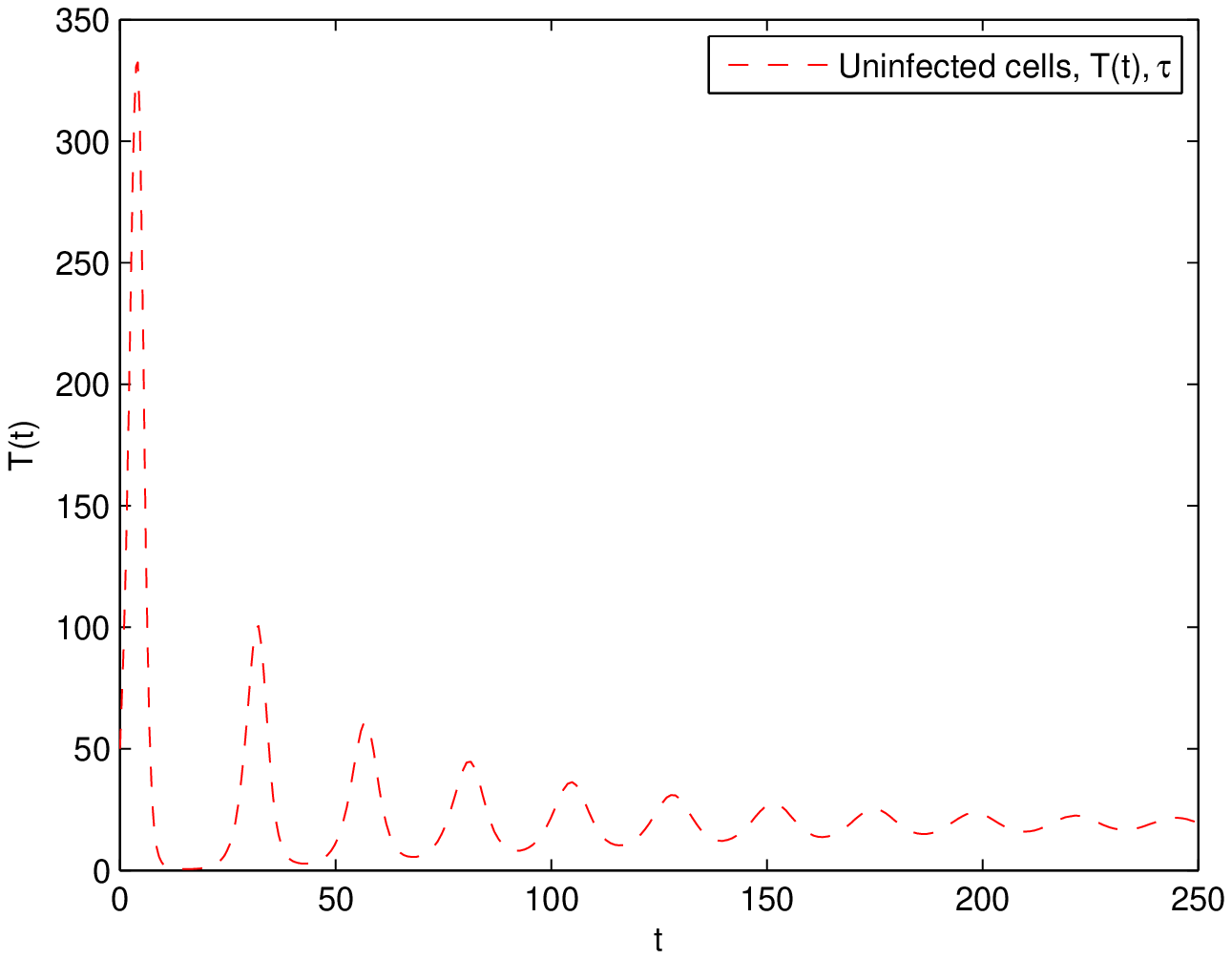}}
\subfigure[Infected, $I(t)$]{\includegraphics[scale=0.3]{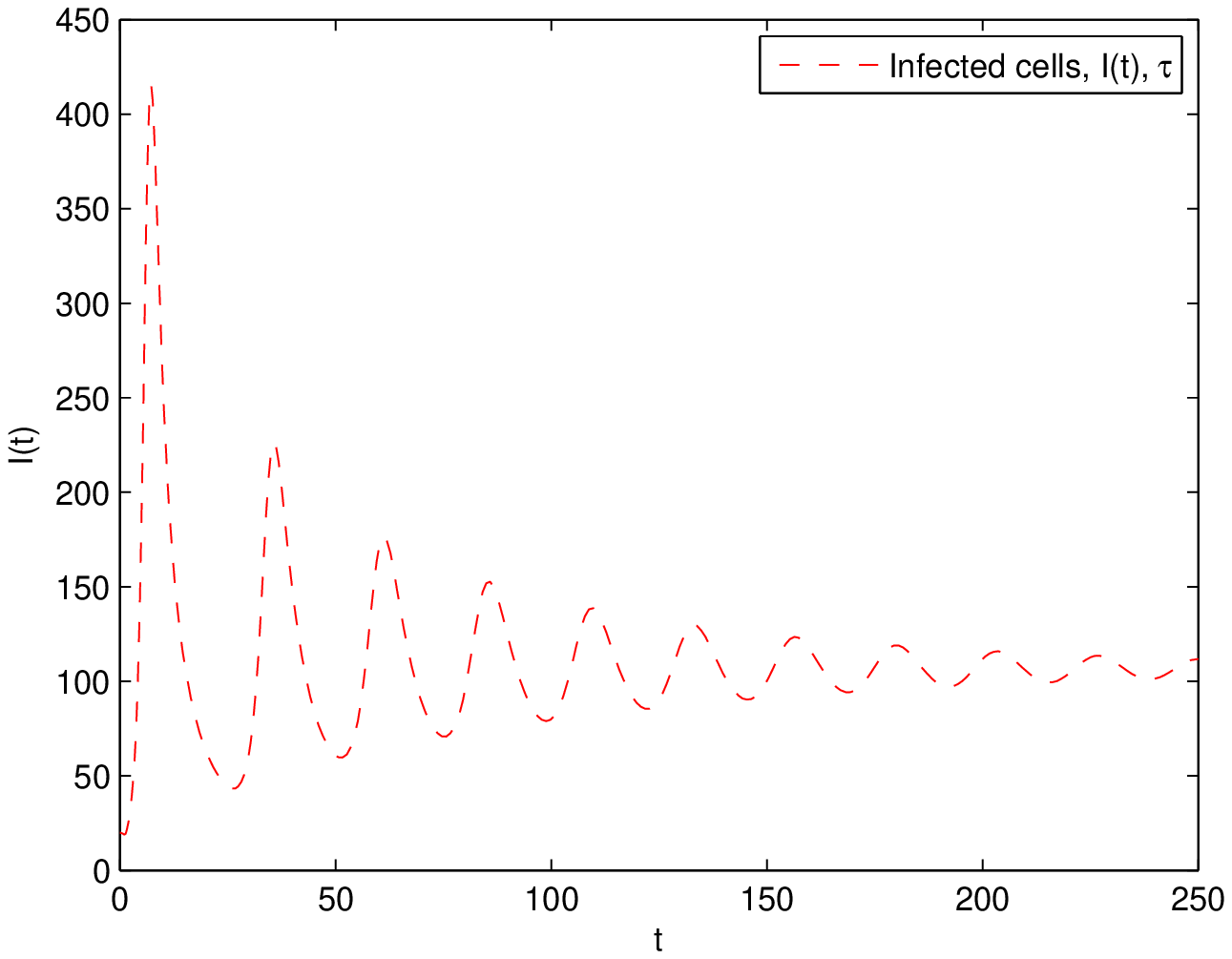}}
\subfigure[Virus, $V(t)$]{\includegraphics[scale=0.3]{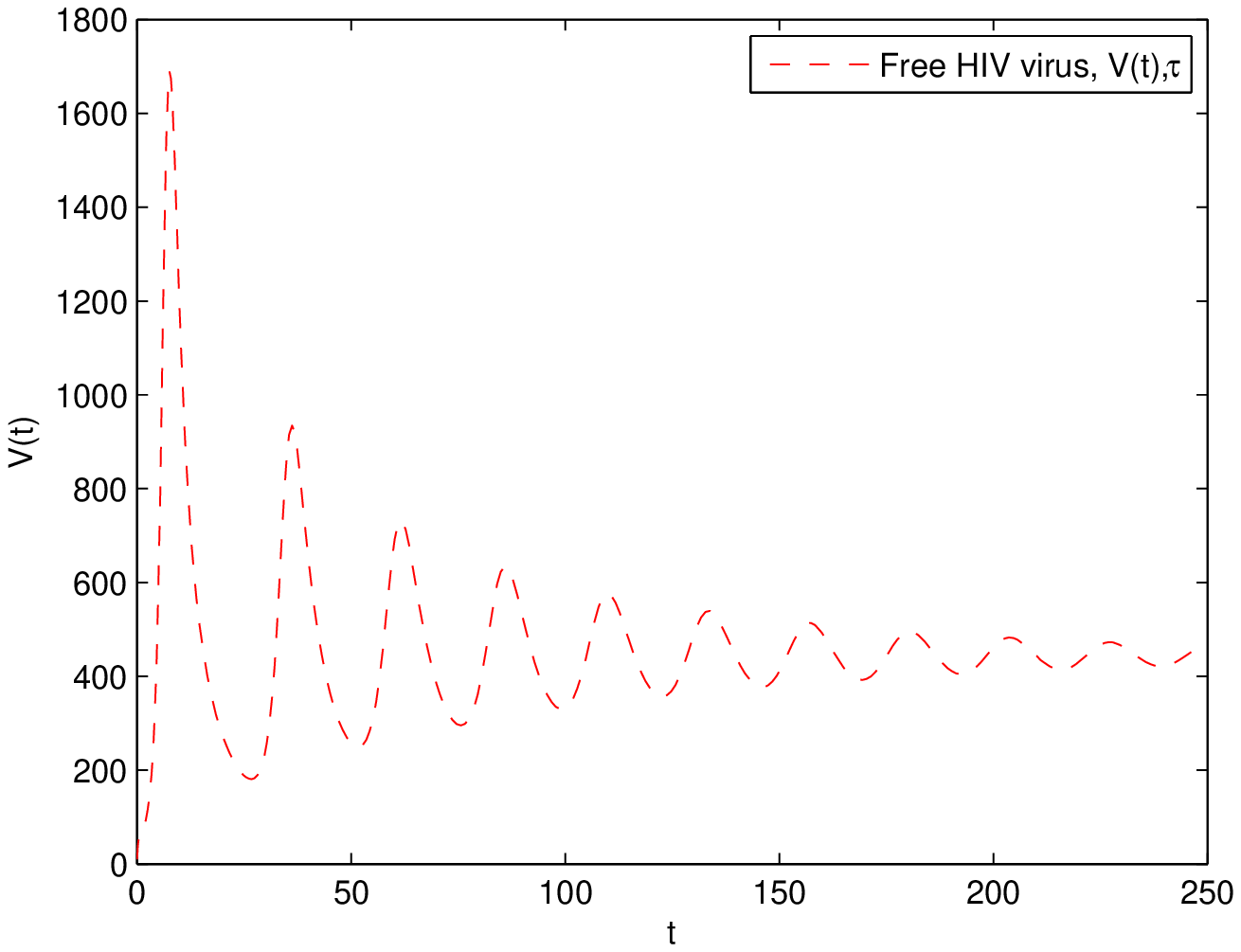}}
\caption{Stability of the $E_2$ equilibrium with $\tau =1$}
\label{figure6}
\end{figure}

\begin{figure}[H]
\centering
\subfigure[Healthy, $T(t)$ ]{\includegraphics[scale=0.3]{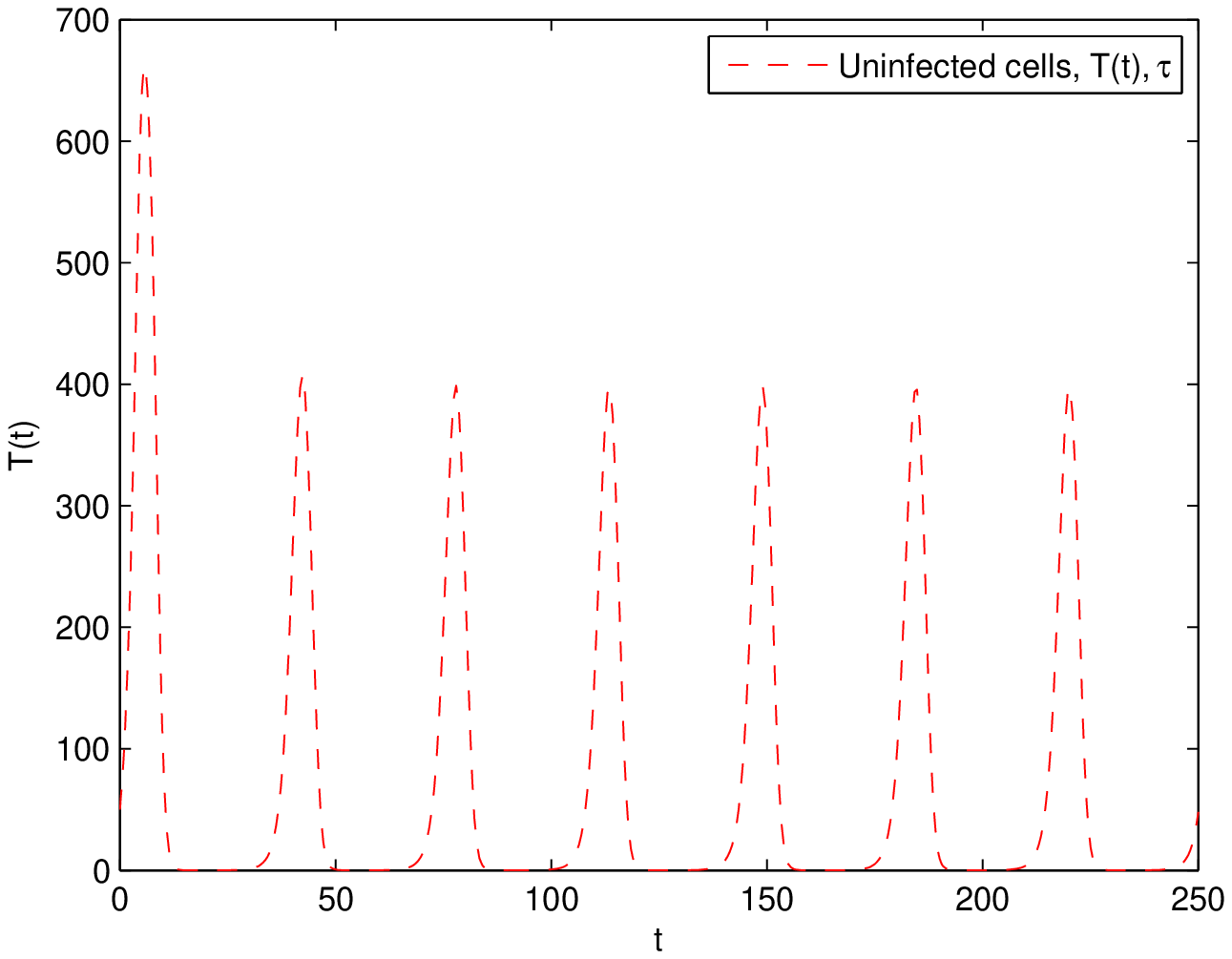}}
\subfigure[Infected, $I(t)$]{\includegraphics[scale=0.3]{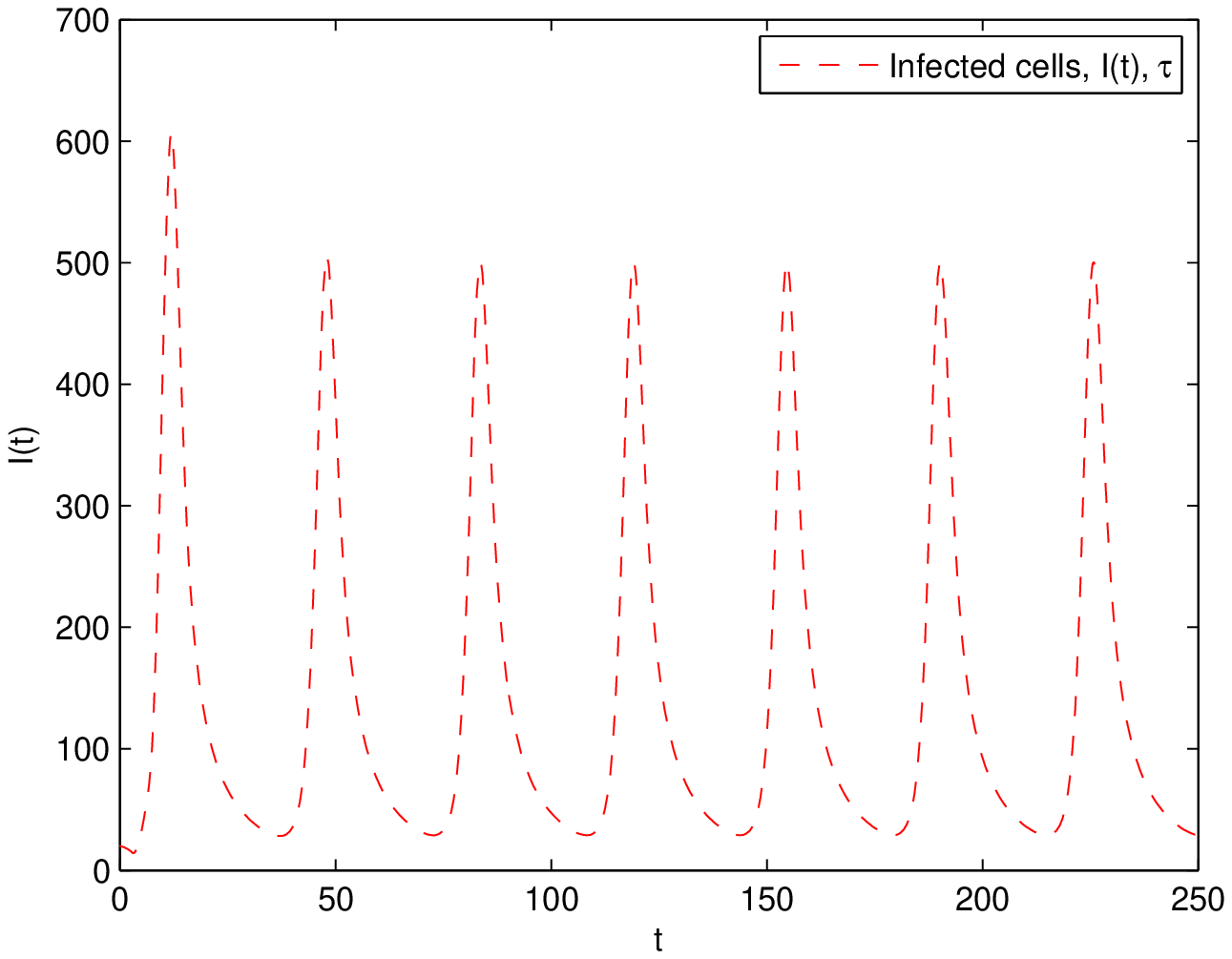}}
\subfigure[Virus, $V(t)$]{\includegraphics[scale=0.3]{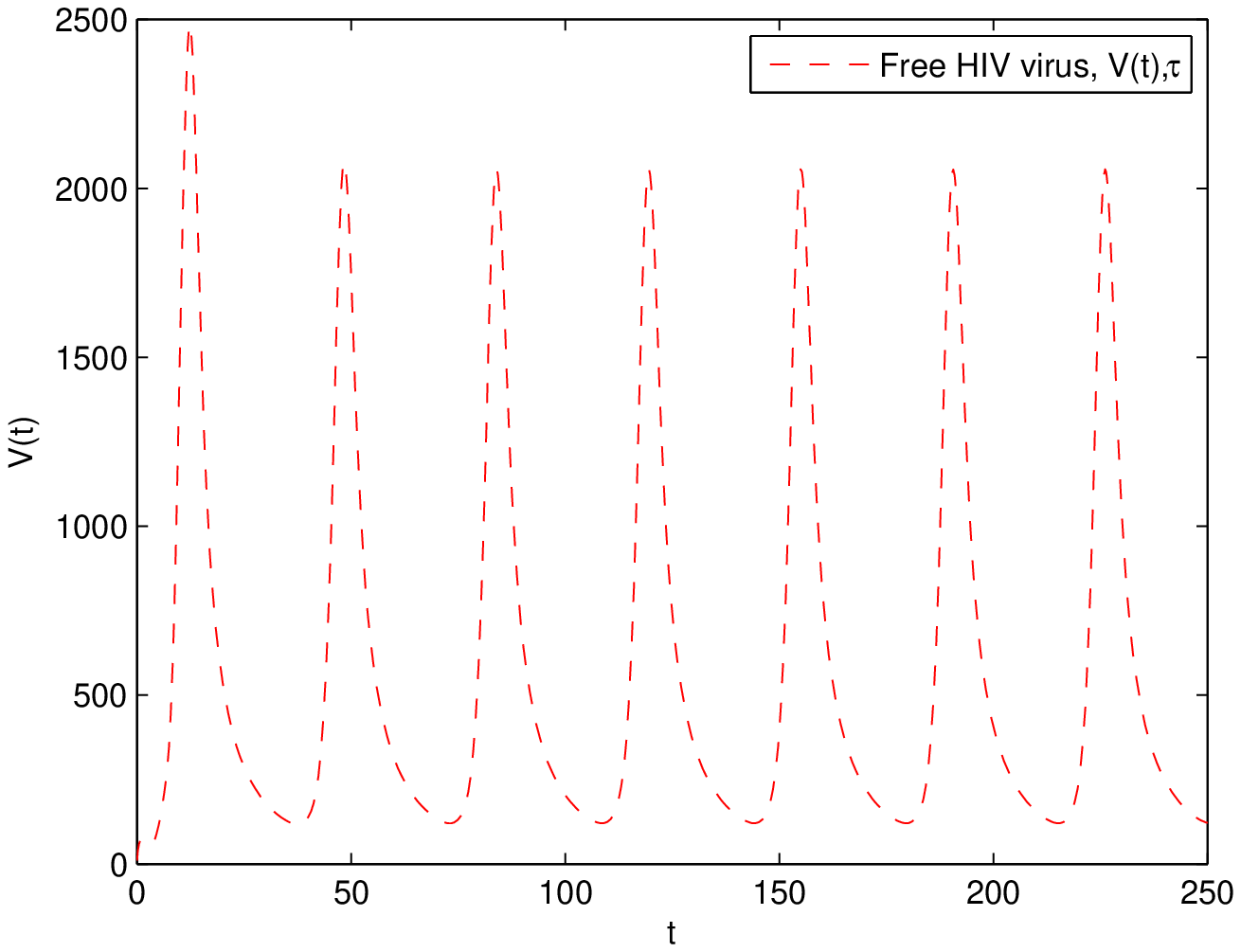}}
\caption{Stability of the $E_2$ equilibrium with $\tau =3$}
\label{figure7}
\end{figure}

In figure \ref{figure7C} we take increasing values for $c$, the declination rate of virions, for the simulations we consider the values for the parameters as $s=0.01$, $d=0.02$, $a=0.95$, $T_{max}=1200$ $b=0.0027$ $\alpha=0.001$ $\mu=1$, $p=10$, and $\tau =4$ we can observe that for a increasing value of $c=13$ the periodic solutions have a smaller period and they disappear for $c$ large enough
\begin{figure}[H]
\centering
\subfigure[Periodic solution for $c=5$ ]{\includegraphics[scale=0.3]{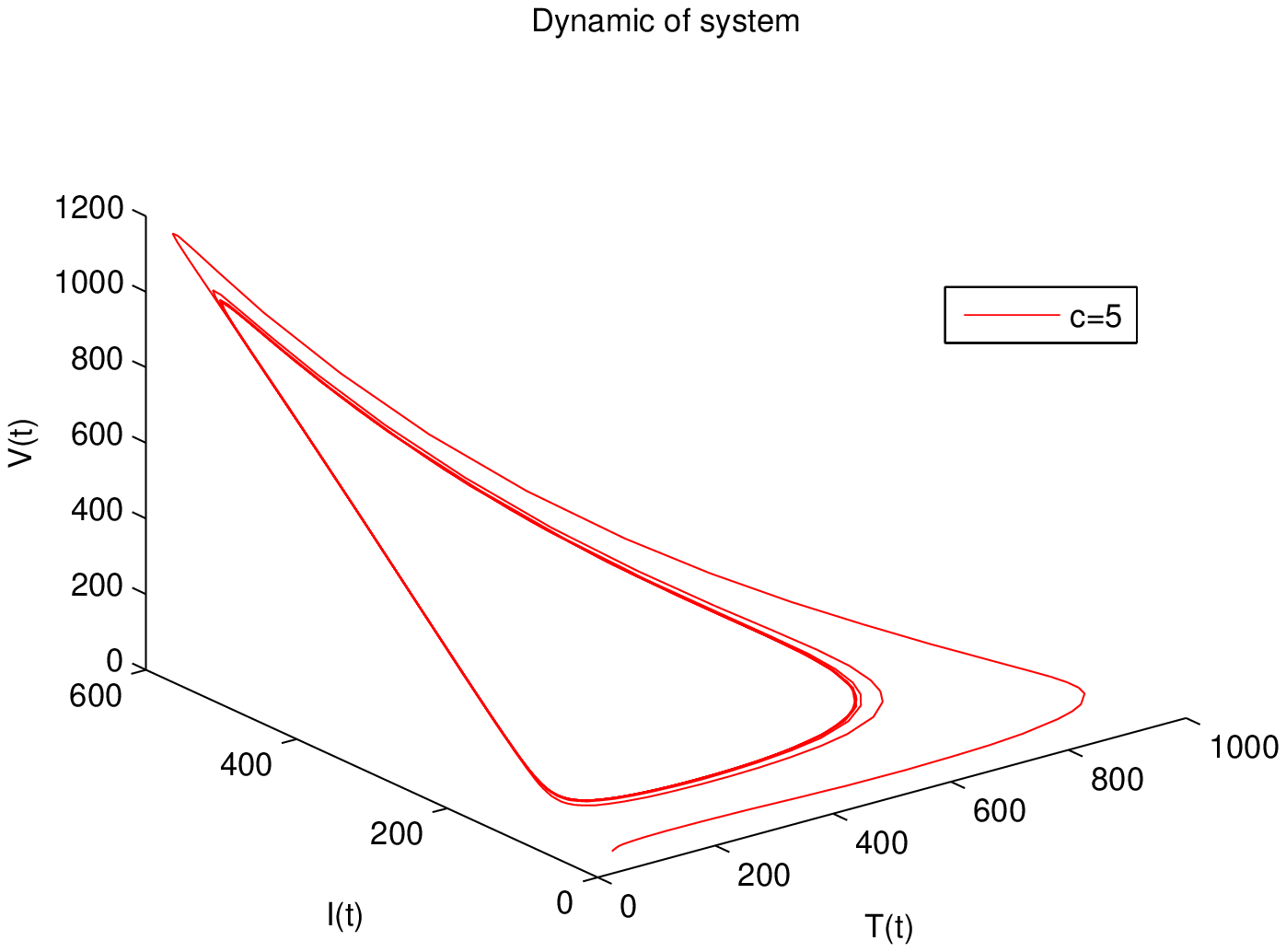}}
\subfigure[Periodic solution for $c=10$ ]{\includegraphics[scale=0.3]{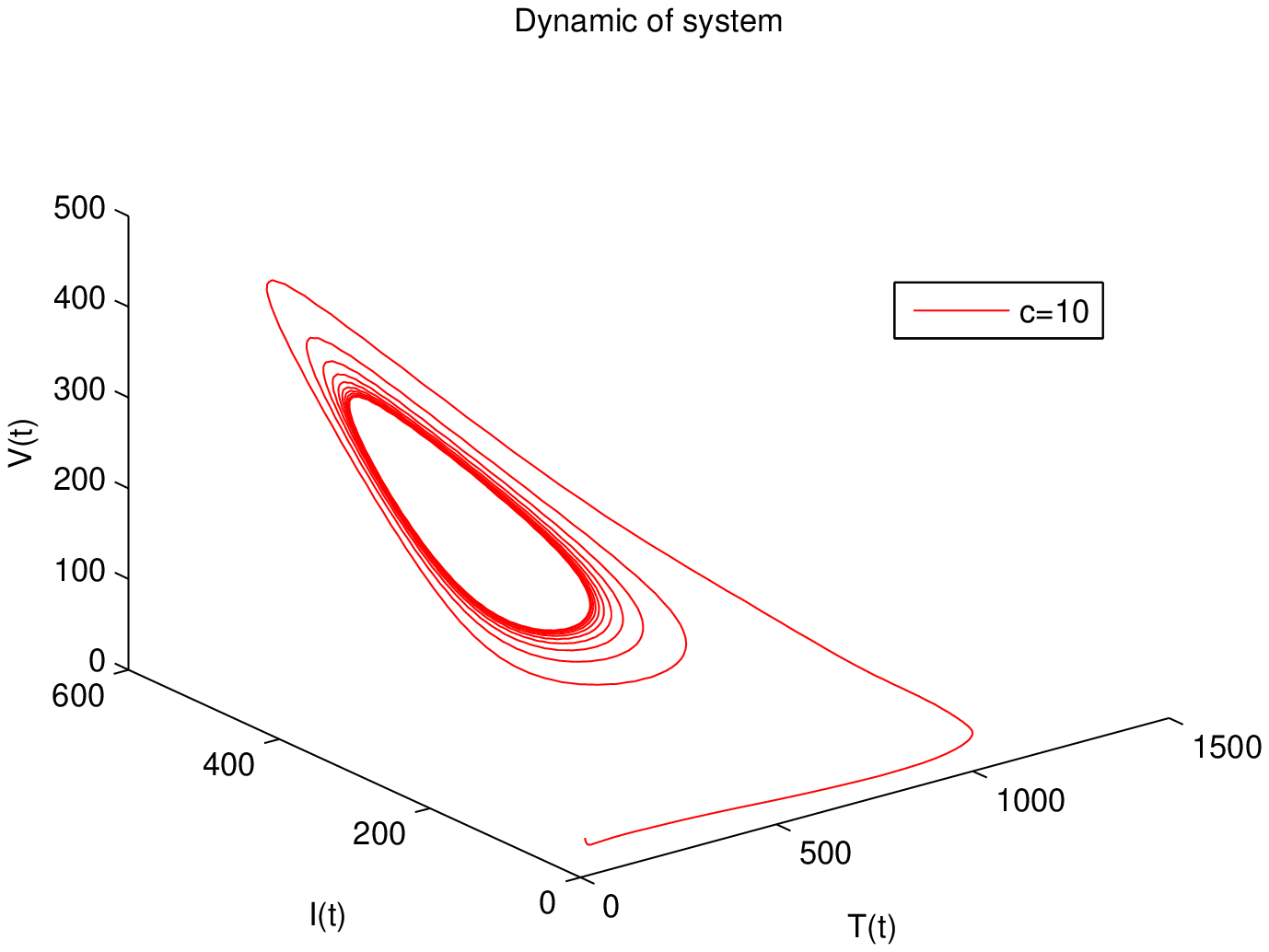}}
\subfigure[Periodic solution for $c=13$ ]{\includegraphics[scale=0.3]{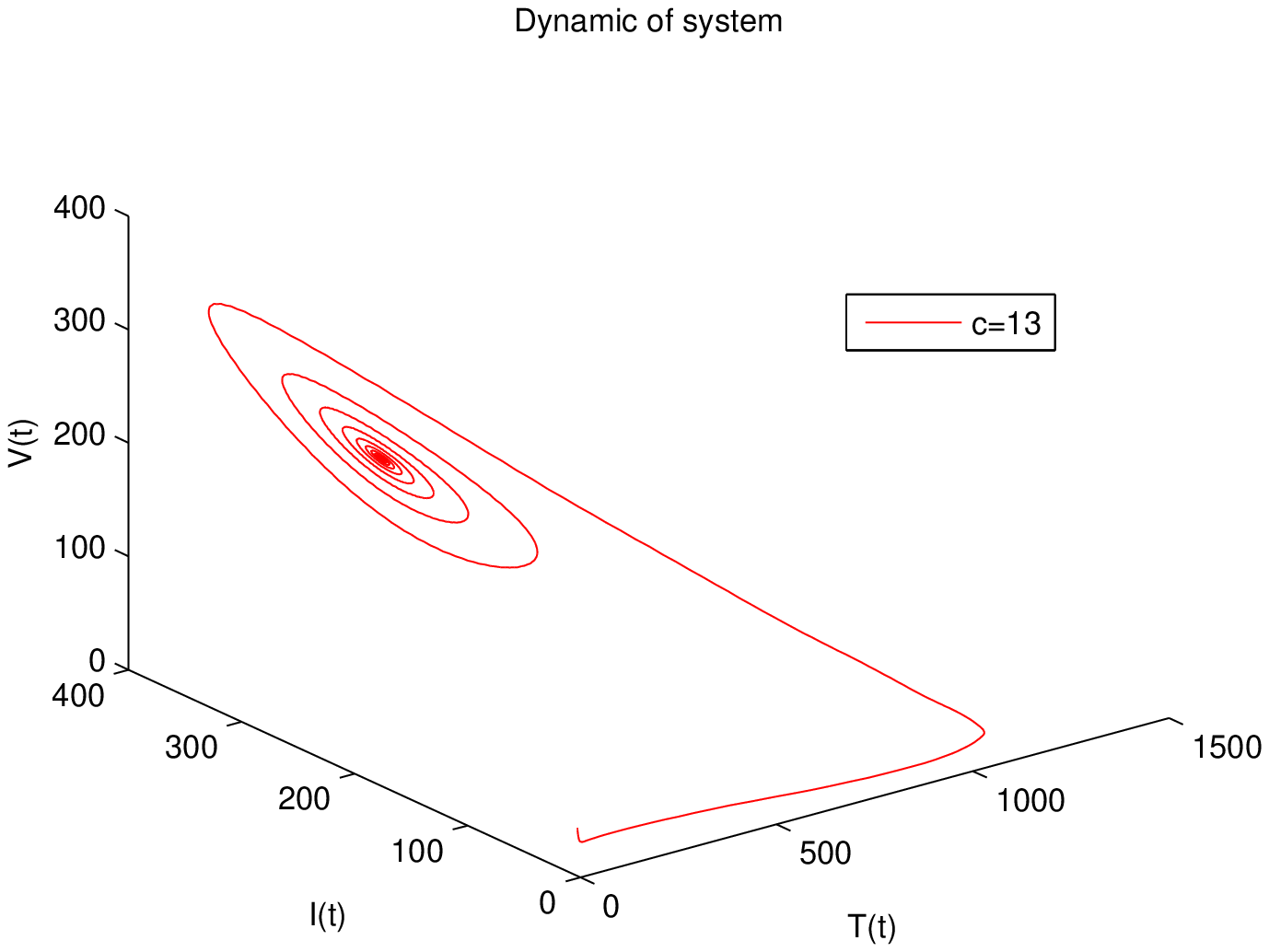}}
\caption{Periodic solutions for different values of $c$}
\label{figure7C}
\end{figure}
 The numerical simulations  shows that if the clearance rate of viral particles  $c$ is sufficiently large (small) then nonexistence (existence) periodic orbits.

Now taking $\alpha =0.005$ we can see the effect of saturation in this case the equilibrium is $(314.2, 278.7,1161.2)$ and the dynamics for different values of $\tau$ are in figure \ref{figure8}, again we can observe periodic solutions for system.
\begin{figure}[H]
\centering
\subfigure[Dynamics, $T(t)$, $I(t)$, $V(t)$, with $\tau =10$]{\includegraphics[scale=0.3]{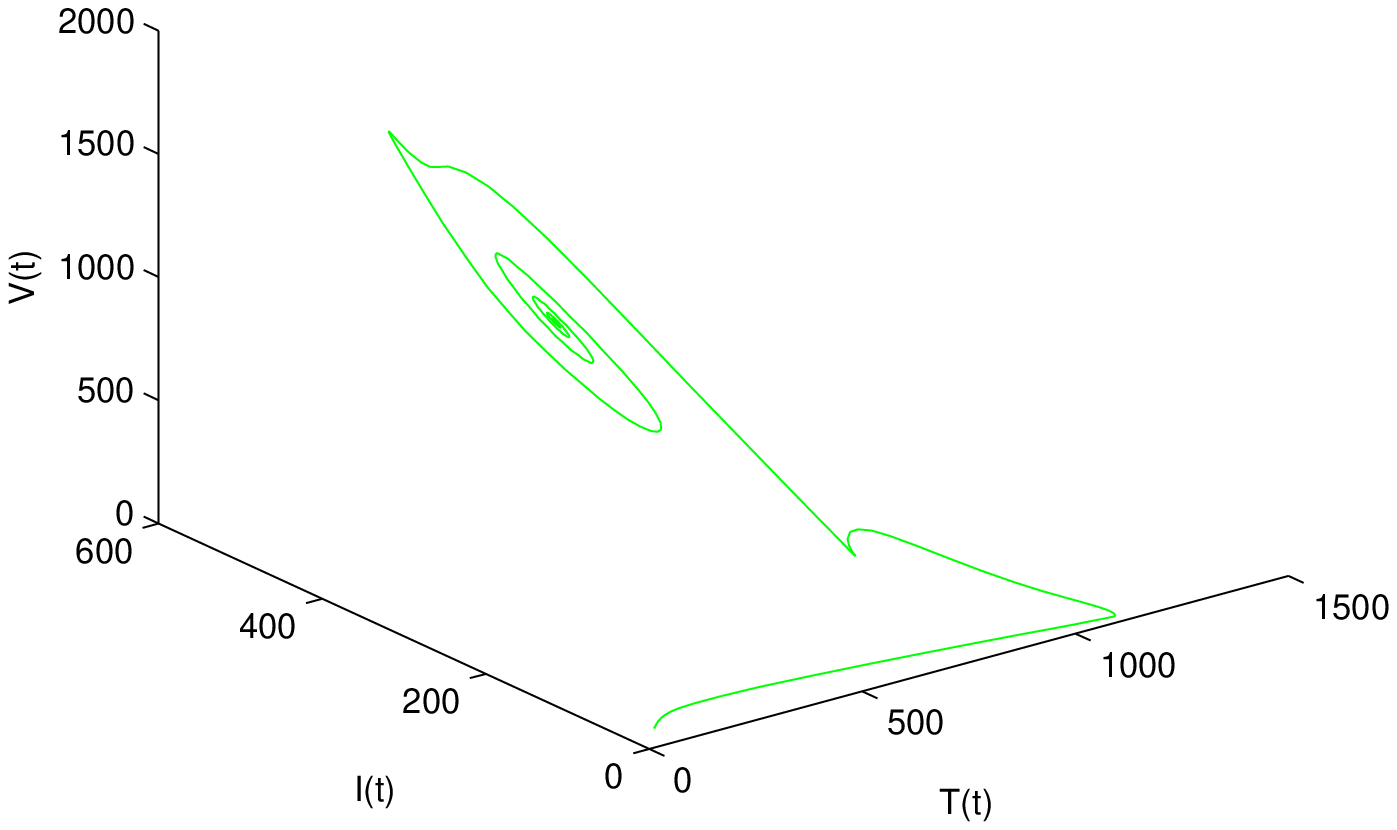}}
\subfigure[Dynamics, $T(t)$, $I(t)$, $V(t)$, with $\tau =15$]{\includegraphics[scale=0.3]{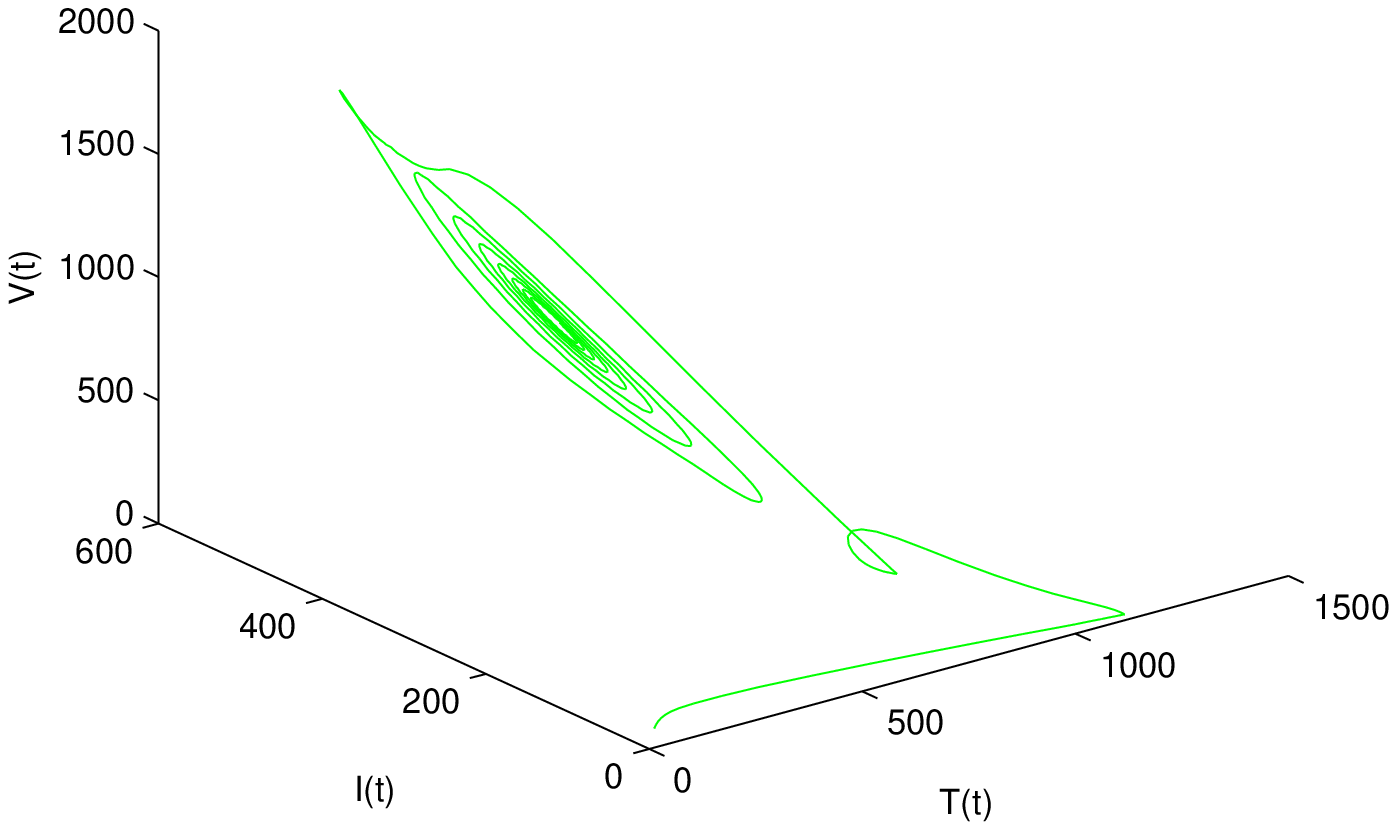}}
\subfigure[Dynamics, $T(t)$, $I(t)$, $V(t)$, with $\tau =25$]{\includegraphics[scale=0.3]{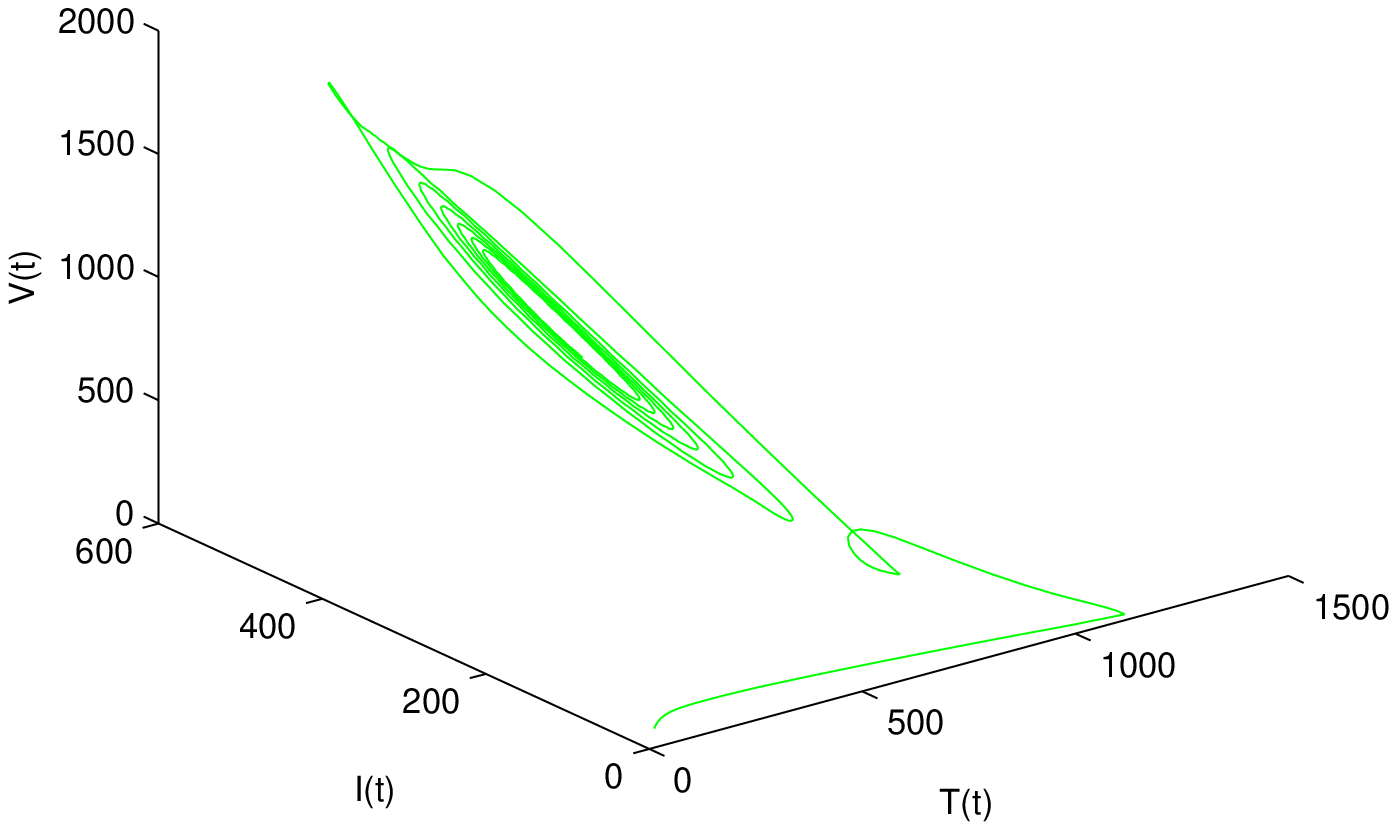}}
\subfigure[Dynamics, $T(t)$, $I(t)$, $V(t)$, with $\tau =35$]{\includegraphics[scale=0.3]{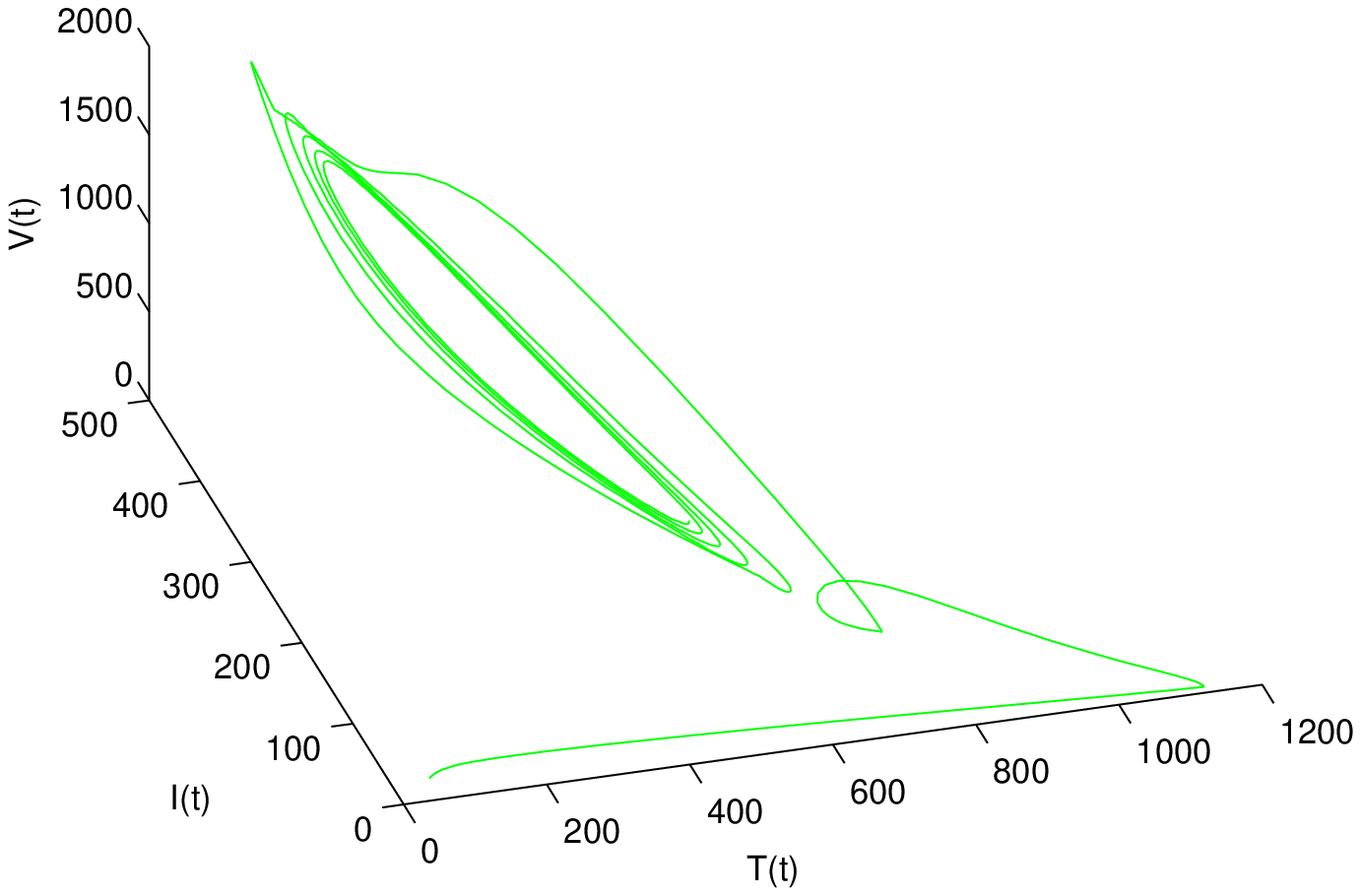}}
\caption{Dynamic of system with $\alpha =0.005$}
\label{figure8}
\end{figure}

\section{Sensitivity analysis}\label{sensitivity}

In this section we provide a local sensitivity analysis of the basic reproduction number, in order to assess which parameter has the greatest influence on changes of $R_0$ values and hence the greatest effect in determining whether the disease will be cleared in the population (see e.g. \cite{chetal}).\\
To this aim, denoting by $\Psi$ the generic parameter of system \eqref{hiv3}, we evaluate the \emph{normalised sensitivity index}
$$
S_{\Psi}=\frac{\Psi}{R_0}\frac{\partial R_0}{\partial \Psi}
$$
which indicates how sensitive $R_0$ is to a change of parameter $\Psi$. A positive (resp. negative) index indicates that an increase in the parameter value results in an increase (resp. decrease) in the $R_0$ value.

We consider the values $s=0.01$, $d=0.02$, $a=0.95$, $T_{max}=1200$ $b=0.0027$ $\alpha=0.001$ $\mu=1$, $p=10$, $c=2.4$, in order to evaluate the normalised sensitivity index, we are able to show our results in figure \ref{figure9}, in the figure we can appreciate that the parameters $b$, $p$ the infection constant rate and the reproductive rate respectively have a positive influence in the value of $R_0$ which means if we increase or decrease any of them by say $10\%$ then $R_0$ will increase or decrease by $9.9\%$, note that $T_{\max}$ have the same effect. The index for parameters $\mu$ and $c$ which represent the death rate of infected cells and the rate of clean of virions, show that increasing this values say by $10\%$ will decrease the value of $R_0$ almost by a $10\%$. While for the parameters $a$, $d$ an increase of $10\%$ of their value will increase and decrease respectively $R_0$ by $2\%$ and $1.9\%$ and to increase $s$ by a $100\%$ will increase the value of $R_0$ by $0.8\%$.
\begin{figure}[H]
\centering
\includegraphics[scale=0.8]{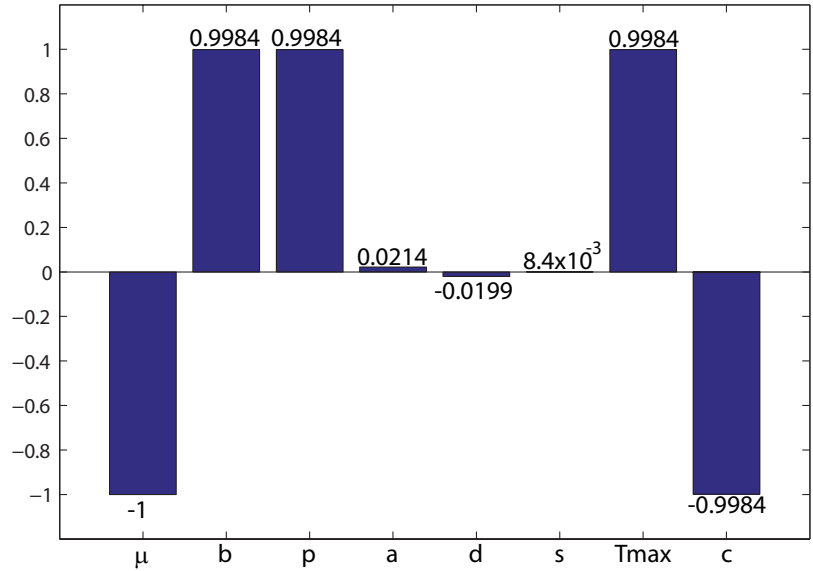}
\caption{Sensitivity index for $R_0$ with respect to some chosen parameters}
\label{figure9}
\end{figure}
Other effects over $R_0$ can be analyzed if we consider other values for the parameters.
\section{Conclusions}

In this paper, we extend the hepatitis model with mitotic
transmission to incorporate the effect of saturation infection
function and an intracellular time delay between infection of a
infected target cell and production of new virus particles.

We have established results about the local and global stability of equilibria. We can conclude than the infection-free stability is completely determined by the value of the basic reproductive number $R_0$, if $R_0\leq 1$ then the infection-free equilibrium will be stable and unstable if $R_0>1$. For the infected equilibrium we established conditions to ensure the local stability. We need the condition $a\leq d+\dfrac{a}{T_{\max}}[T_2+I_2]$ to ensure the global stability for this equilibrium. We also established conditions for the occurrence of a Hopf Bifurcation. And we established conditions to ensure the permanence of our system. Moreover we made an estimation for the length of delay to preserve stability depending on the parameters of system \eqref{hiv3}.\\

For a non--cytopathic virus ($d=\mu$), we found the sufficient and necessary conditions of the global stability for the infected equilibrium state of hepatitis infection model (\ref{hiv3}). We note that
\begin{eqnarray}\label{sumTI}
T_2+I_2=T_0=\frac{T_{max}}{2a}\left[(a-d)+\sqrt{(a-d)^{2}+\frac{4as}{
T_{max}}}\right].
\end{eqnarray}
Substituting the relation (\ref{sumTI}) into condition $a\leq d+\frac{a}{T_{\max}}[T_2+I_2]$, we obtain
\begin{eqnarray*}
(a-d)\leq\sqrt{(a-d)^{2}+\frac{4as}{ T_{max}}}.
\end{eqnarray*}
It is clear that the inequality is satisfied for all positive parameter values. From Theorem \ref{GASinfected}, we obtain the following corollary.
\begin{corollary} Assume that $d=\mu$ and $R_{0}>1$, then the infected equilibrium  state $E_2$ of  model (\ref{hiv3}) is globally asymptotically stable for any $\tau\geq0$.
\end{corollary}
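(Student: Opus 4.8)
The plan is to deduce the corollary directly from Theorem~\ref{GASinfected}: under the non-cytopathic assumption $d=\mu$ I will show the sufficient condition $a\le d+\frac{a}{T_{\max}}[T_2+I_2]$ holds automatically, so that $R_0>1$ alone already guarantees global asymptotic stability. The single nontrivial ingredient is the identity $T_2+I_2=T_0$ recorded in~\eqref{sumTI}; everything else is an elementary inequality.

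First I would prove $T_2+I_2=T_0$. Adding the first two equations of the equilibrium system~\eqref{hcv1} cancels the shared infection term $\frac{bT_2V_2}{1+\alpha V_2}$ and leaves
\[
0=s-dT_2-\mu I_2+a(T_2+I_2)\Bigl(1-\tfrac{T_2+I_2}{T_{\max}}\Bigr).
\]
Imposing $d=\mu$ and setting $S=T_2+I_2$ reduces this to the scalar quadratic $\frac{a}{T_{\max}}S^2+(d-a)S-s=0$, which is exactly the $I_2=0$ specialization of the quadratic defining $f(I_2)$, i.e. the equation satisfied by $T_0=f(0)$. This quadratic has roots of opposite sign (constant term $-s<0$), hence a unique positive root; since $S$ and $T_0$ are both positive they must coincide, giving~\eqref{sumTI}.

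Next I would substitute~\eqref{sumTI} into the stability condition. Using the explicit value of $T_0$ from~\eqref{hiv45}, one has $\frac{a}{T_{\max}}T_0=\tfrac12\bigl[(a-d)+\sqrt{(a-d)^2+\tfrac{4as}{T_{\max}}}\bigr]$, so the required inequality $a-d\le\frac{a}{T_{\max}}T_0$ is equivalent to
\[
(a-d)\le\sqrt{(a-d)^2+\tfrac{4as}{T_{\max}}}.
\]
Since $a,s>0$ the quantity under the root strictly exceeds $(a-d)^2$, whence the right-hand side exceeds $|a-d|\ge a-d$; thus the inequality holds for every admissible choice of parameters, with no extra hypothesis.

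Having verified the hypothesis of Theorem~\ref{GASinfected} unconditionally (given only $d=\mu$ and $R_0>1$), I would simply invoke that theorem to conclude that $E_2$ is globally asymptotically stable for all $\tau\ge0$. I expect the only delicate point to be the cancellation producing~\eqref{sumTI}: the essential mechanism is that equal uninfected and infected death rates let the two logistic balance equations combine into a single scalar relation identical to the one defining $T_0$. Beyond confirming that this collapse is exact, I anticipate no genuine obstacle.
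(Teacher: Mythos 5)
Your proposal is correct and follows essentially the same route as the paper: substitute the identity $T_2+I_2=T_0$ into the hypothesis $a\le d+\frac{a}{T_{\max}}[T_2+I_2]$ of Theorem \ref{GASinfected} and observe that the resulting inequality $(a-d)\le\sqrt{(a-d)^2+\frac{4as}{T_{\max}}}$ holds for all positive parameter values. The only difference is that you actually prove the identity \eqref{sumTI} (by adding the two equilibrium equations, cancelling the infection terms, and noting that with $d=\mu$ the sum $T_2+I_2$ satisfies the same quadratic as $T_0$, which has a unique positive root), whereas the paper merely asserts it; this fills a small gap but is not a different approach.
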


Our analysis shows that if the infection is not cytopathic
($d=\mu$) then nonexistence periodic solutions.

\section*{Acknowledgements}
This article was supported by UADY under grant FMAT-2012-0004 and Mexican SNI under grants 15284 and 33365.

\section*{References}

\end{document}